\theoremstyle{definition}
\newtheorem{theorem}{Theorem}[section]
\newtheorem{corollary}{Corollary}[theorem]
\newtheorem{lemma}{Lemma}
\newtheorem{proposition}{Proposition}
\DeclareMathOperator{\E}{\mathbb{E}}
\DeclareMathOperator*{\argmin}{arg\,min}
\newcommand{\btau}{\bm{\tau}}
\newcommand{\bbeta}{\bm{\beta}}
\newcommand{\bF}{\mathbf{F}}
\newcommand{\mS}{\mathcal{S}}
\newcommand{\bu}{\mathbf{u}}
\newcommand{\rn}[1]{%
  \textup{\lowercase\expandafter{\romannumeral#1}}%
}
\DeclareSymbolFont{fouriersymbols}{FMS}{futm}{m}{n}
\DeclareSymbolFont{fourierlargesymbols}{FMX}{futm}{m}{n}
\DeclareMathDelimiter{\VERT}{\mathord}{fouriersymbols}{152}{fourierlargesymbols}{147}
\theoremstyle{definition}
\newtheorem{remark}{Remark}[section]
\date{} 
\begin{document}

\title{Asymptotic Theory of $\ell_1$-Regularized PDE Identification from a Single Noisy Trajectory} 
\author{Yuchen He \footnote{Yuchen He and Namjoon Suh contributed equally to this article, listed in alphabetical order.} \thanks{School of Mathematics, Georgia Institute of Technology, 686 Cherry St NW, Atlanta, GA, USA.},
Namjoon Suh \footnotemark[1]  \thanks{School of Industrial and Systems Engineering, Georgia Institute of Technology, 755 Ferst Dr, Atlanta, GA, USA.}, 
Xiaoming Huo \footnotemark[3], 
Sung Ha Kang \footnotemark[2], 
Yajun Mei\footnotemark[3]}

\maketitle
\begin{abstract}
We prove the support recovery for a general class of linear and nonlinear evolutionary partial differential equation (PDE) identification from a single noisy trajectory using $\ell_1$ regularized Pseudo-Least Squares model~($\ell_1$-PsLS).
In any associative $\mathbb{R}$-algebra generated by finitely many differentiation operators that contain the unknown PDE operator, applying $\ell_1$-PsLS to a given data set yields a family of candidate models with coefficients $\mathbf{c}(\lambda)$ parameterized by the regularization weight $\lambda\geq 0$.
The trace of $\{\mathbf{c}(\lambda)\}_{\lambda\geq 0}$ suffers from high variance due to data noises and finite difference approximation errors.
We provide a set of sufficient conditions which guarantee  that, from a single trajectory data denoised by a Local-Polynomial filter, the support of $\mathbf{c}(\lambda)$ asymptotically converges to the true signed-support associated with the underlying PDE for sufficiently many data and a certain range of $\lambda$. We also show various numerical experiments to validate our theory.
\end{abstract}

\section{Introduction}
Differential equations are widely used to describe many interesting phenomena arising in scientific fields, including physics~\cite{schrodinger1926undulatory}, social sciences~\cite{black1973pricing}, biomedical sciences~\cite{haskovec2018ode}, and economics~\cite{achdou2014partial}, just to name a few. 
The forward problem of solving equations or simulating state variables for differential models has been extensively studied either theoretically or numerically in literature.
We consider an inverse problem of learning a Partial Differential Equations (PDE) model.

More specifically, we assume that the governing PDE is a multi-variate polynomial of a subset of a prescribed dictionary containing different differential terms.
Let $u(x,t):\mathbb{R}\times[0,+\infty)\to\mathbb{R}$ be a real-valued function, where $x$ be the spatial and $t$ be the temporal variables.
Suppose that within a bounded region of $\mathbb{R}\times[0,+\infty)$, $u(x,t)$ satisfies an evolutionary PDE: 
\begin{align}\label{eq.PDE0}
    \partial_{t} u = &\mathcal{F}(u,\partial_xu,\partial_{x}^{2}u,\dots,),\quad
    \forall (x,t)\in \Omega\subseteq \mathbb{R}\times[0,+\infty). \;
\end{align}
Here, $\partial_{t} u$ (or $u_{t}$) denotes the partial derivative of $u$ with respect to temporal variable, $t$; for $p=0,1,2,\dots,\partial_{x}^{p}u$ denotes the $p$-th order partial derivative of $u$ with respect to spatial  variable, $x$; 
$\mathcal{F}$ is an unknown polynomial mapping, and $\Omega$ is a bounded open subset of space-time domain.
This format encloses various important classes of PDEs, 
e.g., advection-diffusion-decay equation characterizing pollutant distribution in fluid,
Burgers’ equation modeling the traffic flow \cite{musha1978traffic}, Kolmogorov-Petrovsky-Piskunov equation describing phase transitions \cite{tikhomirov1991study}, and Korteweg-de-Vries equation simulating the shallow water dynamics \cite{newell1985solitons}. 

In our work, $\mathcal{F}$ is assumed to be a linear map, parametrized by a sparse vector $\bbeta^*\in\mathbb{R}^{K}$: that is,
$u_{t}$ is represented as a linear combination of the arguments of $\mathcal{F}$, and only a few from a large set of potential functions are relevant with $u_{t}$.
Our goal is to estimate the correct non-zero indices of $\bbeta^*$, given a single noisy trajectory of the function $u(x,t)$.
Readers can refer to Subsection~\ref{setting} for more detailed descriptions on the structural assumptions on $\mathcal{F}$, $\bbeta^*$, and noisy trajectory.

This problem setting naturally leads us to develop a two-stage method for the PDE identification based on Local-Polynomial smoothing and the $\ell_{1}$-regularized Pseudo Least Square ($\ell_{1}$-PsLS) method.
In the first stage, from a given noisy observation, we propose to estimate the underlying bi-variate function $u(x,t)$ and its partial derivatives with respect to its spatial and temporal dimensions via the Local-Polynomial fitting \cite{fan1997local,fan2018local}. 
In the second stage, with the constructed functions through Local-Polynomial regression, we propose to identify the correct differential terms and estimate model parameters via an  $\ell_{1}$-regularized Pseudo Least-Square method.

We note that the two-stage method with Local-Polynomial regression has been applied in the Ordinary Differential Equations (ODE) setting. 
Specifically, the paper~\cite{liang2008parameter} established the consistency and asymptotic normality of the pseudo least square estimator in the ODE setting, where they used Local-Polynomial regression to estimate the state variables from the noisy data. 
Similarly,~\cite{chen2008efficient,chen2008estimation} studied the parameter estimation of ODE models with varying coefficients. 
However, these literature focused on estimating model parameters, rather than on selecting correct  differential models.
In the context of PDE,~\cite{bar1999fitting} studied PDE identification problems, using two-stage method.
Authors of the paper modeled unknown PDEs using multivariate polynomials of sufficiently high order, and the best fit was chosen by minimizing the least squares error of the polynomial approximation.
Nonetheless, $\ell_{1}$ penalization for model selection was not used, and theoretical justification for their method remains underdeveloped.

From the theoretical point of view, our paper is the first work to propose the 
method, $\ell_{1}$-PsLS, with a provable guarantee in the PDE recovery problem.
Our main theoretical contribution is to establish sufficient conditions for \textbf{\textit{signed-support recovery}} of
the proposed $\ell_{1}$-PsLS in PDE identification problems.
It is worth noting that the signed-support recovery is a slightly stronger criterion than the support recovery, 
where its primary goal is not limited to finding the non-zero indices of $\bbeta^*$, but also aims at recovering the correct signs of the selected coefficients.
Ensuring the correct signed-support recovery of governing dynamical system has an important practical implication since many PDEs are sensitive to the signs of coefficients. 
For example, changing the sign of the advection term in the transport equation reverses the moving direction, and inverting the sign of the Laplacian term of heat equation leads to instability of the system of interest.

Our theorem states that following two main conditions are sufficient for the signed-support recovery of $\ell_{1}$-PsLS: 
(\rn{1}) \textbf{\textit{mutual incoherence condition}} among the arguments of the map $\mathcal{F}$, and 
(\rn{2}) \textbf{\textit{$\bbeta^*_{\text{min}}$-condition}} on $\bbeta^*$. 
The first condition states that a large number of irrelevant predictors cannot exhibit an overly strong influence on the subset of relevant predictors.
The second condition says that the minimum absolute value of non-zero entries of $\bbeta^*$ should be greater than a certain threshold. 
These conditions appear in the statistical literature on the signed-support/support recovery of Lasso~\cite{tibshirani1996regression,wainwright2009sharp,jia2013lasso,buhlmann2011statistics} in linear regression problems, and our work rigorously shows that these are also essential for the signed-support recovery of PDE identification problems.

We employ Primal-Dual Witness (PDW) construction~\cite{wainwright2009sharp} as the main proof technique for the theorem.
PDW construction is a popular mathematical technique for certifying variable-selection consistency of $\ell_{1}$-penalized M-estimation problems including Lasso.
See~\cite{ravikumar2010high,ravikumar2009sparse,ravikumar2011high,obozinski2008union,wang2013block,jalali2010dirty}.
For reader's convenience, we provide a brief introduction of the technique in the Appendix~\ref{app.pdw}.
However, we want to emphasize that our Theorem is not a direct result of the trivial application of the PDW construction.
Our problem settings are different from those of the work~\cite{wainwright2009sharp} in two aspects, which add some delicacies to our proof: 
\begin{itemize}
    \item As will be detailed in Subsection \ref{method_intro}, the distribution of residual vector $\btau$ is unknown, and neither mean $0$ nor independent in our setting.
    On the contrary, in the work of~\cite{wainwright2009sharp}, each entry of the residual vector is assumed to follow centered Gaussian with $\sigma^2>0$ variance and independent with the others.
    
    \item In the $\ell_{1}$-PsLS method, the feature matrix obtained via Local-Polynomial fitting from noisy data is always random and has dependent rows uniquely determined through the underlying PDE.
    On the other hand, \cite{wainwright2009sharp} divided their analysis into two cases, where the feature matrix $X$ is either deterministic or random.
    When $X$ is random, it is assumed to be a Gaussian ensemble with independent rows, whose covariance matrix satisfies mutual incoherence condition.
\end{itemize}
\noindent \textbf{Organization.}
The remainder of the paper is organized as follows.
Some related literature with our work are reviewed and discussed in Section~\ref{sec_related_work}.
In Section~\ref{Sec3}, we formally define our problem by imposing some specific structural assumptions on $\mathcal{F}$ and propose a $\ell_{1}$-PsLS method for PDE identification.
In Section~\ref{Sec4}, the main theorem of our work is given on the signed-support recovery of $\ell_{1}$-PsLS with the mutual incoherence assumption on the feature matrix $\bF$, and we provide a high-level outline of the proof.
Section~\ref{Sec5} is devoted to provide a similar result with that of the one in the main theorem in Section~\ref{Sec4} under milder assumption: that is, mutual incoherence assumption is imposed on the estimated feature matrix $\widehat{\bF}$; 
an overview of proof is furnished. 
Related technical difficulties for the proof and main technical contribution of the paper are also given.
Section~\ref{Sec6} provides two Lemmas for completing the proof of the main Theorem by linking the mutual incoherence assumption with ground-truth $\bF$ to its sampled version. 
In Section~\ref{sec_num}, we show various numerical examples to validate and demonstrate different aspects of our method. 
We conclude this paper in Section~\ref{sec_conclusion} with some discussion. 
\\ \\
\noindent \textbf{Notation.} 
We use the following notation for asymptotics: For sufficiently large $n$, we write $f(n)=\mathcal{O}(g(n))$, if there exists a constant $K>0$ such that $f(n) \leq Kg(n)$,
and $f(n)=\Omega(g(n))$ if $f(n)\geq K'g(n)$ for some constant $K'>0$. 
The notation $f(n)=\Theta(g(n))$ means that $f(n)=\mathcal{O}(g(n))$ and $f(n)=\Omega(g(n))$.
We adopt bold lower-case letters for vectors and bold upper-case letters for matrices. 
For a vector $\mathbf{v}\in\mathbb{R}^n$,  $\|\mathbf{v}\|_1:={\sum_{i=1}^{n}}|v_i|$, $\|\mathbf{v}\|_2:=\sqrt{\sum_{i=1}^{n}v_i^2}$, and $\|\mathbf{v}\|_\infty:=\underset{1\leq i \leq n}{\max}|v_i|$. For a matrix $\mathbf{A}\in\mathbb{R}^{n\times m}$, $\mathbf{A}^T$ denotes its transpose, $\VERT\mathbf{A}\VERT_2:=\max_{\forall \|x\|_{2}=1}\|Ax\|_{2}$, $\VERT\mathbf{A}\VERT_\infty:=\underset{1\leq i \leq n}{\max}\sum_{j=1}^{m}|A_{i,j}|$,
$\VERT\mathbf{A}\VERT_{\infty,\infty}:=\underset{1\leq i,j \leq n}{\max}|A_{i,j}|$, and $\VERT\mathbf{A}\VERT_{F}:=\sqrt{\sum_{i=1}^{n}\sum_{j=1}^{m}A_{i,j}^{2}}$.

\section{Related Works}\label{sec_related_work}
Our work is relevant to various topics in the fields of applied mathematics and statistics. 
Among them, we provide two most closely related topics: (\rn{1}) Regression-based framework for PDE identification, and (\rn{2}) Some theoretical results of support-recovery of Lasso~\cite{tibshirani1996regression} in linear regression setting.
In this Section, we denote $K$ as the problem dimension, $s$ as the number of non-zero entries of model parameter, and $n$ as the number of observations. 
\\ \\
\noindent\textbf{Regression-based Methods.}
Recently, various regression-based frameworks have been developed and applied for model selection and parameter estimation of dynamic data.
A sparsity-promoting method was proposed in~\cite{brunton2016discovering} for extracting the governing dynamical system, by comparing the computed velocity to a large set of potential trial functions. 
Under the over-determined systems of linear equations (i.e.,$n \gg k$), the authors developed a sequential-thresholded least-square method to select the correct nonlinear functions. 
In the follow-up study,~\cite{cortiella2021sparse} devised a weighted-$\ell_{1}$-regularized least squares solver for improving the accuracy and robustness of the approach introduced by~\cite{brunton2016discovering} in the presence of state-measurement noise. 
Several papers~\cite{kang2019ident,schaeffer2017learning,rudy2017data} also suggested sparse regression frameworks for PDE identification problems over spatial-temporal data. 
Specifically,~\cite{schaeffer2017learning} studied the model selection problem via Lasso under the PDE context. The author empirically showed that the method works well in various important equations such as Burgers' equation, Navier-Stokes equation, Swift-Hohenberg equation.
Recently,~\cite{kang2019ident} considered PDE identification problem using numerical time evolution. The authors utilized Lasso to select candidate monomials, then proposed the time evolution error to select the underlying true model.
Unlike the previously mentioned literature, which was mostly empirical,~\cite{schaeffer2018extracting} provided a provable guarantee on the usage of $\ell_{1}$-norm for PDE identification problems, based upon the theoretical results from compressive sensing. 
Interestingly, this work imposed the \textit{incoherence property} on the feature matrix and employed the Legendre-transform on the columns of the matrix to ensure that the property holds for every PDE recovery problem of interest.
Our work imposes mutual incoherence assumption on the feature matrix, which is an analogous notion of the incoherence property. 
However, the important difference between our paper and~\cite{schaeffer2018extracting} is that our work only allows a single trajectory, whereas~\cite{schaeffer2018extracting}’s theorem requires $\Omega(s\log K)$ bursts of initialization for the exact recovery of the underlying PDE.
\\ \\
\noindent\textbf{Support Recovery in Statistics.}
Support recovery or variable selection problems of Lasso have a long and intensive history in the statistical literature. 
In the noiseless setting, many researchers~\cite{chen2001atomic,donoho2001uncertainty,donoho2005stable,feuer2003sparse,candes2005decoding,candes2006robust} established different sufficient conditions for either the deterministic or random predictors for the support recovery problems of linear systems via the $\ell_{1}$-norm.

Since our work falls into the category of noisy setting, we focus more on reviewing the body of work in the noisy setting.
In~\cite{knight2000asymptotics}, authors studied the asymptotic behavior of the Lasso-type estimator with fixed dimension $K$ under the general centered i.i.d. noises with variance $\sigma^2>0$.  
Both~\cite{tropp2006just} and~\cite{donoho2005stable} independently developed sufficient conditions for the support of Lasso estimator to be contained within true support of the sparse model. 
Under a more general setting, when the exterior noise is i.i.d.~with finite moments,~\cite{zhao2006model} showed that the Irrepresentable Condition~\cite{fuchs2005recovery} is almost necessary and sufficient for Lasso's signed-support recovery for fixed $K$ and $s$. 
Furthermore, under the Gaussian noise assumption, they showed that Lasso can still achieve signed-support recovery when $K$ is allowed to grow exponentially faster than $n$. 
In a non-asymptotic setting,~\cite{wainwright2009sharp} established the sharp relationship of $n$, $K$, and $s$, required for the exact sign consistency of Lasso, where $K$ and $s$ are allowed to grow as $n$ increases under mutual incoherence condition. 
Using a similar technique in~\cite{wainwright2009sharp}, the paper~\cite{jia2013lasso} studied Lasso under Poisson-like model with heteroscedastic noise and show that irrepresentable condition can serve as a necessary and sufficient condition for signed-support recovery in their setting.
In the context of graphical model,~\cite{meinshausen2006high,ravikumar2008model} analyzed the model selection consistency of Gaussian graphical models, and~\cite{ravikumar2010high} showed the signed-support recovery of Ising models. 
See~\cite{fan2010selective} for a more comprehensive overview on this topic.

\begin{remark}
Our work is of asymptotic nature with fixed $K$ and $s$, while the number of grid points of the observed trajectory tends to infinity in both the spatial and the temporal dimensions.
\end{remark}

\section{PDE Identification via $\ell_{1}$-PsLS} \label{Sec3}
In Subsection \ref{setting}, we provide concrete problem settings on the governing PDE of \eqref{eq.PDE0} and the observed trajectory. 
Then, specific settings of the Local-Polynomial regression for the estimations of state variables in our paper are provided in Subsection \ref{local}.
Lastly, we propose a two-stage $\ell_{1}$-regularized Pseudo Least Square method for PDE identification in Subsection \ref{method_intro}.

\subsection{Problem Setting and Notations} \label{setting}
Based on the general form \eqref{eq.PDE0}, we take $(x,t)\in [0,X_{\max}) \times [0,T_{\max})$ for some finite constants $0<X_{\max},T_{\max}<\infty$.
It is assumed that the underlying mapping $\mathcal{F}$ is a degree $2$ polynomial\footnote{It should be noted that our setting can be generalized to higher-degrees of polynomials and functions with multiple spatial dimensions.} parametrized by a coefficient vector $\bbeta^*=(\beta_{0}^*,\beta_{1}^*,\dots,\beta^*_{p,q},\dots)$ with real entries, that is,
\begin{align}
    & u_{t}(x,t)=\mathcal{F}(u,\partial_xu,\partial_{x}^{2}u,\dots,;\bbeta^*):=\beta_{0}^*+\beta_{1}^*u+\beta_{2}^*u_x+\beta_{3}^*u_{xx}+\dots
    +\beta^*_{p,q}\partial_x^{p}u \partial_x^{q}u+\dots .\label{eq.PDE1}
\end{align}
We call the monomials in the right-hand side of \eqref{eq.PDE1} as \textit{feature variables}.
We set a finite integer upper-bound, $P_{\max}>0$, for the possible orders of the partial derivatives of $u$ with respect to $x$ in \eqref{eq.PDE1}.
Hence, We assume that $\bbeta^*\in\mathbb{R}^{K}$, with $K=1+2(P_{\max}+1)+{{P_{\max}+1}\choose{2}}$;
consequently, constant and any term of the form $\partial_{x}^{p}u$ or $\partial_{x}^{p}u\partial_{x}^{q}u$, for $0\leq p, q \leq P_{\max}$, are contained in \eqref{eq.PDE1}.
Notice that many entries of $\bbeta^*$ can be zero.
We denote $\mathcal{S}(\bbeta^*):=\{0\leq j\leq K \mid \beta_{j}^*\neq 0\}$, or simply $\mathcal{S}$, as the support of the coefficient vector $\bbeta^*$, i.e., the set of indices of the non-zero entries.
Additionally, we denote $s$ as the cardinality of the set $\mathcal{S}$, i.e., $s:=|\mathcal{S}(\bbeta^*)|$.

The given data set $\mathcal{D}=\{\big(X_{i},t_{n},U_{i}^{n}\big)\mid ~i=0,\dots,M-1; n=0,\dots,N-1\} \subseteq \Omega\times\mathbb{R}$ consists of  
$M\times N$ data, where $M,N\in\mathbb{R}$, $M,N\geq 1$.
Each $(X_{i},t_{n})\in\Omega$ represents a space-time sampling grid point, and 
$U_{i}^{n}$ is a representation of $u(X_{i},t_{n})$ contaminated by additive Gaussian noise:
    \begin{align*}
        U_{i}^{n} = u(X_{i},t_{n}) + \nu_{i}^{n},\quad ~\nu_i^n\overset{\text{i.i.d.}}{\sim}\mathcal{N}(0,\sigma^2)\;,
    \end{align*}
whose second moment is uniformly bounded as follows: $\sup_{N,M\in\mathbb{R}}\max_{n,i}E\left|U_{i}^{n}\right|^{2}:=\eta^{2}<\infty$.
Here $\mathcal{N}(0,\sigma^2)$ denotes the centered normal distribution with variance $\sigma^{2}>0$.

\subsection{Local-Polynomial Regression Estimators for Derivatives} \label{local}
Given data $\{(X_i,t_n,U_i^n)\}$ with $i=0,1,\dots,M-1$ and  $n=0,1,\dots,N-1$, we employ a local quadratic regression to estimate $u_t(X_i,\cdot)$ for each fixed space point $X_i$ and use a Local-Polynomial with degree $p+1$ to estimate $\partial_x^pu(\cdot,t_n)$ at each temporal point $t_n$, for each degree $p=0,1,\dots,P_{\max}.$ More specifically, we solve the following  optimization problems:
\begin{align}
&\bigg\{\widehat{b}_j(X_i,t)\bigg\}_{j=0,1,2}=\argmin_{b_j(t)\in\mathbb{R},0\leq j \leq 2 }\sum_{n=0}^{N-1}\bigg(U_i^n-\sum_{j=0}^{2}b_j(t)(t_n-t)^j\bigg)^2\mathcal{K}_{h_N}\bigg(t_n-t\bigg)\;,\nonumber\\
&\quad\quad\text{for}~i=0,1,\dots,M-1\;;\label{eq_K_b}\\
&\bigg\{\widehat{c}^p_j(x,t_n)\bigg\}_{j=0,1,\dots,p+1}=\argmin_{c_j(t)\in\mathbb{R},0\leq j \leq p+1 }\sum_{i=0}^{M-1}\bigg(U_i^n-\sum_{j=0}^{p+1}c^p_j(t)(X_i-x)^j\bigg)^2\mathcal{K}_{w_{M}}\bigg(X_i-x\bigg)\;,\nonumber\\
&\quad\quad\text{for}~n=0,1,\dots,N-1\; \text{and}~p=0,1,\dots,P_{\max}.\label{eq_K_c}
\end{align}
and set $\widehat{u}_t(X_i,t) = \widehat{b}_1(X_i,t)$ and $\widehat{\partial_x^pu}(x,t_n ) = p!\widehat{c}^p_{p}(x,t_n)$. 
Here $h_N$ and $w_{p,M}$ denote the window width parameters, and $\mathcal{K}_w(z):=\mathcal{K}(z/w)/w$ for some kernel function $\mathcal{K}$ with window width $w>0$. 
Specific choices of the order of polynomial fit for the functions $\widehat{u}_{t}$ and $\widehat{\partial_{x}^{p}u}$ are to strike the balance between modeling bias and variance.
See Subsections 3.1 and 3.3 of Fan and Gijbels \cite{fan1997local} for more rigorous treatments on this topic.
Also the kernel $\mathcal{K}$ is assumed to be uniformly continuous and absolutely integrable with respect to Lebesgue measure on the real-line; $\mathcal{K}(z)\to 0$ as $|z|\to+\infty$; and  $\int |z\ln|z\|^{1/2}|dK(z)|<+\infty$.

Optimization problems \eqref{eq_K_b} and \eqref{eq_K_c} have closed-form solutions in the form of
weighted least square estimator. 
See Appendix~\ref{app.loc}.
However, for theoretical investigation, we employ the notion of \emph{equivalent kernel} \cite{fan1997local,fan2018local} to write the solutions as follows:
for any fixed spatial point $X_{i}$, $i=0,1,\dots,M-1$, $\widehat{u}_t(X_i,t)$ can be written as: 
\begin{align} \label{LP_t}
	\widehat{u_t}(X_i,t)&=\frac{1}{Nh^2_N}\sum_{n=0}^{N-1}\mathcal{K}_{2}^*\bigg(\frac{t_n-t}{h_N}\bigg)U_i^n \big\{ 1+o_{\mathbb{P}}(1)\big\}.
\end{align}
Similarly, for any fixed temporal point $t_n$, $n=0,1,\dots, N-1$, the estimation for the $p$-th order partial derivative takes the form: 
\begin{align} \label{LP_x}
    \widehat{\partial_x^pu}(x,t_n) = \frac{p!}{Mw_M^{p+1}}\sum_{i=1}^M\mathcal{K}^*_{p}\left(\frac{X_{i}-x}{w_M}\right)U_i^n\big\{ 1+o_{\mathbb{P}}(1)\big\}.
\end{align}
Here, $\mathcal{K}_{j}^*(z)=e_{j}^{T}S^{-1}(1,z,\dots,z^{p})^{T}K(z)$ is called an equivalent kernel, where $e_{j}$ denotes a unit vector with $1$ on the $j^{\text{th}}$ position; $S=(\int z^{l+s}\mathcal{K}(z)dz)_{0\leq l,s \leq p}$ is the moment matrix associated with kernel $\mathcal{K}$; and $o_{\mathbb{P}}(1)$ denotes a random quantity tending to zero as either $N$ or $M$ tends to infinity.
From here, we will omit the dependency on $j$ for the simplicity of notation when using the equivalent kernel.

\begin{remark}
The most important reason for using the Local-Polynomial fitting for the estimation of state variables and their derivatives is its rich literature in asymptotic properties and uniform convergence of the estimator~\cite{fan1997local,mack1982weak,tusnady1977remark,fan2018local}.
Specifically, these results allow us to explore the behavior of the tail-probability of the measurement error $\btau$, which is essential for the analysis of the $\ell_{1}$-PsLS estimator.
See Subsection~\ref{Subsec5.2} for more information. 
\end{remark}

\subsection{$\ell_1$-regularized Pseudo Least Square Model} \label{method_intro}
First, we introduce matrix-vector notations for compact expressions of the problem. 
We let $\mathbf{u}_t\in\mathbb{R}^{NM}$ denote the vectorization of $\{u_t(X_i,t_n)\}_{i=0,\dots,M-1}^{n=0,\dots,N-1}$ in a dictionary order prioritizing the spatial dimension; that is, $\mathbf{u}_t^T=\begin{bmatrix}
u_t(X_0,t_0)&u_t(X_1,t_0)&\cdots	
\end{bmatrix}
$. Define the \textit{feature matrix}, $\mathbf{F}\in\mathbb{R}^{NM\times K}$, as the collection of values of feature variables organized as follows:
\begin{align*}
\mathbf{F} := \begin{bmatrix}
 	1&u(X_0,t_0) & \partial_xu(X_0,t_0)  &\cdots& \partial^p_xu(X_0,t_0)\partial_x^qu(X_0,t_0) &\cdots\\
 	1&u(X_1,t_0) & \partial_xu(X_1,t_0)  &\cdots& \partial^p_xu(X_1,t_0)\partial_x^qu(X_1,t_0) &\cdots\\
 	\vdots&\vdots& \vdots&\ddots&\vdots&\cdots\\
 	1&u(X_{M-1},t_0) & \partial_xu(X_{M-1},t_0)  &\cdots& \partial^p_xu(X_{M-1},t_0)\partial_x^qu(X_{M-1},t_0) &\cdots\\
 	1&u(X_{0},t_1) & \partial_xu(X_{0},t_1)  &\cdots& \partial^p_xu(X_{0},t_1)\partial_x^qu(X_{0},t_1) &\cdots\\
 	\vdots&\vdots& \vdots&\ddots&\vdots&\cdots\\
 	1&u(X_{M-1},t_{N-1})&\partial_xu(X_{M-1},t_{N-1})&\cdots& \partial^p_xu(X_{M-1},t_{N-1})\partial_x^qu(X_{M-1},t_{N-1})  &\cdots
 	\end{bmatrix}\;.
\end{align*}
With these notations, the equation \eqref{eq.PDE1} can be written as $\bu_{t}=\bF\bbeta^*$.
Note that before estimating the correct signed-support of $\bbeta^*$, $\bu_{t}$ and $\bF$ need to be estimated.
Conventional regression techniques such as Local-Polynomial regression, smoothing spline, and regression spline,
among others, can be used to estimate $\bu_{t}$ and columns of $\bF$.
In this paper, we employ the Local-Polynomial approach.
We denote $\widehat{\mathbf{u}}_t\in\mathbb{R}^{NM}$ and $\widehat{\mathbf{F}}\in\mathbb{R}^{NM\times K}$ by replacing the entries of $\mathbf{u}_t$ and $\mathbf{F}$ respectively with the corresponding estimators. (i.e., $\widehat{(u_t)_i^n}$, $\widehat{(\partial_x^pu)_i^n}$, and $\widehat{(\partial_x^pu)_i^n}\widehat{(\partial_x^qu)_i^n}$.) 

Let $\Delta \bu_t=\widehat{\bu}_t-\bu_t$, $\Delta \bF=\widehat{\bF}-\bF$ denote the difference between the obtained estimators $\widehat{\bu}_t$ and $\widehat{\bF}$ via Local-Polynomial regression and their ground-truth counterparts.
With these notations, we formally obtain a regression model
\begin{align} \label{residual}
    \widehat{\bu}_t=\widehat{\bF}\bbeta^*+\btau\;,\;\;\;\text{where}~\btau=\Delta \bF\bbeta^*-\Delta \bu_t\;.
\end{align}
The natural extension for inducing sparsity of the parameter of interest is to add positively weighted $\ell_{1}$-penalty term $\left\|\bbeta\right\|_{1}$ to the squared loss $\| \widehat{u}_{t}-\widehat{F}\bbeta \|_{2}^{2}$, which leads to the following estimator:
\begin{align} \label{eq.lassomat}
    \widehat{\bbeta}^\lambda=\arg\min_{\bbeta\in\mathbb{R}^K}
    \bigg\{\frac{1}{2NM}\left\|\widehat{\bu}_t-\widehat{\bF}\bbeta\right\|_{2}^{2}+\lambda_{N}\left\|\bbeta\right\|_1\bigg\}\;,
\end{align}
where $\lambda_{N}>0$ is a regularization hyper-parameter.
Note that we normalize the columns of $\widehat{\bF}$ such that $\frac{1}{\sqrt{NM}} \max_{j=1,\dots,K} \| \widehat{\bF}_{j} \|_{2} \leq 1$ while solving \eqref{eq.lassomat}.

Observe that~\eqref{eq.lassomat} is formally identical to Lasso \cite{tibshirani1996regression} for high-dimensional sparsity recovery.
We call \eqref{eq.lassomat} as \textbf{$\ell_{1}$-Pseudo Least Square} method instead of Lasso.
Similarly with~\cite{liang2008parameter}, the word \textbf{\textit{pseudo}} comes from the setting of our problem, that is, $\widehat{\bbeta}^{\lambda}$ is not a true $\ell_{1}$-least square estimator, but a minimizer of the $\ell_{1}$-least square fit with the estimated $\widehat{\bu}_{t}$ and $\widehat{\bF}$.

Additionally, the residual vector $\btau$ violates conventional assumptions on residuals in linear regression, where they are assumed to be centered and independent among entries. 
See \cite{zhao2006model,wainwright2009sharp,knight2000asymptotics}.
If $\widehat{\bu}_{t}$ and $\widehat{\bF}$ are unbiased estimators of $\bu_{t}$ and $\bF$, $\btau$ is a residual vector with mean zero, but its entries are not independent.
However, if $\widehat{\bu}_{t}$ and $\widehat{\bF}$ are biased estimators such as Local-Polynomial estimators in our case, $\btau$ is not a mean zero random vector.
Moreover, the unknown signal $\bbeta^*$ makes the distribution of $\btau$ completely inaccessible.
These complexities make the study of the proposed estimator $\widehat{\bbeta}^{\lambda}$ challenging.

\section{Recovery Theory for $\ell_{1}$-PsLS based PDE Identification} \label{Sec4}
In Subsection \ref{ssec4.1}, we formally describe a signed-support recovery problem.
In Subsection \ref{ssec4.2}, two regularity assumptions on feature matrix $\bF$ are given for the proof of the main theorem. 
Then, the main theorem of this work is presented with some important remarks in Subsection \ref{ssec:main}.
Lastly, we provide a proof sketch of the main Theorem in Subsection \ref{proof_strat}.

\subsection{Signed-Support Recovery} \label{ssec4.1}
The main goal of this paper is to provide provable guarantees that the proposed $\ell_{1}$-PsLS method gives asymptotically consistent estimator of $\bbeta^*$ in the sense of signed-support recovery.
We can formally state this problem with the adoption of $\mathbb{S}_{\pm}(\bbeta)$ notation, that is: 
for any vector $\bbeta\in\mathbb{R}^{K}$, we define its extended sign vector, whose each entry is written as: 
\begin{align*}
    \mathbb{S}_{\pm}(\beta_{i}):=
    \begin{cases}
        +1   & \text{if } \beta_{i} > 0 \\
        -1   & \text{if } \beta_{i} < 0 \\
         0   & \text{if } \beta_{i} = 0,
    \end{cases}
\end{align*}
for $i\in\{1,\dots,K\}$.
This notation encodes the \textit{signed-support} of the vector $\bbeta$.
Denote $\widehat{\bbeta}^{\lambda}$ as the unique solution of $\ell_{1}$-PsLS. 
Then under some regularity conditions on the design matrix $\bF$, we will show,
\begin{equation*}
    \mathbb{P}\big[\mathbb{S}_{\pm}(\widehat{\bbeta}^{\lambda})=\mathbb{S}_{\pm}(\bbeta^{*})\big] \rightarrow{1} \quad \text{as} ~N, M\rightarrow{+\infty},
\end{equation*}
where $N$ and $M$ denote the grid size of temporal and spatial dimensions, respectively.

\subsection{Assumptions} \label{ssec4.2}
We introduce two sufficient conditions frequently assumed in $\ell_{1}$-regularized regression models for the signed-support recovery of the true signal $\bbeta^*$.
\begin{enumerate}
    \item \textbf{Minimal eigenvalue condition.} There exists some constant $C_{\min}>0$ such that:
    \begin{align}
        \Lambda_{\min}\bigg( \frac{1}{NM} \bF_{\mathcal{S}}^{T}\bF_{\mathcal{S}} \bigg) 
        \geq C_{min}.\label{assum1}\tag{A1}
    \end{align}
    Here $\Lambda_{\min}(\textbf{A})$ denotes the minimal eigenvalue of a square matrix $\textbf{A}\in\mathbb{R}^{n \times n}$, and $\bF_{\mathcal{S}}$ is made of columns of $\bF$ when the column index is in the support set $\mathcal{S}$.
    Note that if this condition is violated, the columns of $\bF_{\mathcal{S}}$ would be linearly dependent, and it would be impossible to estimate the true signal $\beta^*$ even in the ``oracle case'' when the support set $\mathcal{S}$ is known a priori.
    
    \item \textbf{Mutual incoherence condition.} For some \textit{incoherence parameter} $\mu\in(0,1]$:
    \begin{align}
        \left\VERT \big( \bF_{\mS^{c}}^{T}\bF_{\mS} \big)\big( \bF_{\mS}^{T}\bF_{\mS} \big)^{-1} \right\VERT_{\infty} \leq 1-\mu.\label{assum2}\tag{A2}
    \end{align}
    This condition states that the irrelevant predictors cannot exhibit an overly strong influence on the relevant predictors.
    More specifically, for each index $j\in\mS^{c}$, the vector  $(\bF_{\mS}^T\bF_\mS)^{-1}\bF_\mS^T\bF_{j}$ is the regression coefficient of $\bF_{j}$ on $\bF_\mS$, thus, 
    it is a measure of how well the column $\bF_j$  aligns with the columns of $\bF_\mS$.
    A large $\mu$ close to $1$ indicates that the columns $\{\bF_j, j \in \mS^{c}\}$ are nearly orthogonal to the columns of $\bF_\mS$, which is desirable for support recovery. 
\end{enumerate}

For future reference, we define $\mathcal{Q}^* := \big( \bF_{\mS^{c}}^{T}\bF_{\mS} \big)\big( \bF_{\mS}^{T}\bF_{\mS} \big)^{-1}$, 
and name it as \textit{population incoherence matrix}.
Also, define its estimated counterpart as $\widehat{\mathcal{Q}}_{N}:=\big( \widehat{\bF}_{\mS^{c}}^{T}\widehat{\bF}_{\mS} \big)\big( \widehat{\bF}_{\mS}^{T}\widehat{\bF}_{\mS} \big)^{-1}$, and call it \textit{sample incoherence matrix}.
Note that the dependence of the support set $\mathcal{S}$ on quantities $\mathcal{Q}^*$ and $\widehat{\mathcal{Q}}_{N}$ is suppressed for notational simplicity.

\subsection{Statement of Main Result} \label{ssec:main}
\begin{theorem} \label{thm1}
Given the observed data set $\mathcal{D}$ whose spatial resolution is related to the temporal resolution via $M = \Theta(N^{\frac{2P_{\max}+5}{7}})$, we take the bandwidths of the kernels in~\eqref{eq_K_b} and~\eqref{eq_K_c} as $h_N=\Theta(N^{-\frac{1}{7}})$,  $w_{M} = \Theta(M^{-\frac{1}{7}})$, respectively.
Under the assumptions \eqref{assum1} and \eqref{assum2} imposed on the ground-truth feature matrix $\bF$, suppose that the sequence of regularization hyper-parameters $\{\lambda_{N}\}$ satisfies 
$\lambda_{N} = \Omega{ \bigg( \frac{C\sqrt{K}\ln N}{\mu N^{2/7-c}} \bigg)}$ for some large enough $N$, some constant $C>0$ and $0<c<\frac{2}{7}$ independent of $N$. 
Then, the following properties hold with probability greater than $1-\mathcal{O}\left(N^{{\frac{2P_{\max}+5}{7}}}\exp\big(-\frac{1}{6}N^{c}\big) \right)\to 1$ as $N\rightarrow{\infty}$:
\begin{enumerate} [label=(\roman*)]
    \item The $\ell_{1}$-PsLS method \eqref{eq.lassomat} has a unique minimizer $\widehat{\bbeta}^{\lambda}\in\mathbb{R}^{K}$ with its support contained within the true support, that is $\mathcal{S}(\widehat{\bbeta}^{\lambda})\subseteq\mathcal{S}(\bbeta^*)$, and the estimator satisfies the $\ell_{\infty}$ bound:
    \begin{align}
        \left\|\widehat{\bbeta}_\mS^\lambda-\bbeta_\mS^*\right\|_\infty\leq K^{3/2}C_{\min}\left(C\frac{\ln N}{N^{2/7-c}}+\lambda_{N}\right)\;.\label{infbound}
    \end{align}
    \item Additionally, if the minimum value of the model parameters  supported on $\mathcal{S}$ is greater than the upper-bound of \eqref{infbound}, that is $\min_{1 \leq i \leq s}|(\bbeta_{\mathcal{S}}^*)_{i}|>K^{3/2}C_{\min}\left(C\frac{\ln N}{N^{2/7-c}}+\lambda_{N}\right)$, then $\widehat{\bbeta}^{\lambda}$ has a correct signed-support. i.e., $\mathbb{S}_{\pm}(\widehat{\bbeta}^{\lambda})=\mathbb{S}_{\pm}(\bbeta^{*})$.
\end{enumerate}
\end{theorem}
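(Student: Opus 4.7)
The plan is to apply the Primal-Dual Witness (PDW) construction, adapted to the two complications highlighted in the introduction: the residual $\btau=\Delta\bF\bbeta^*-\Delta\bu_t$ is neither mean-zero nor independent, and the mutual incoherence hypothesis \eqref{assum2} is stated on the unobserved $\bF$ rather than on $\widehat{\bF}$. First, I would set up the PDW candidate by (a) solving the restricted program obtained by forcing $\bbeta_{\mS^c}=0$ in \eqref{eq.lassomat}, yielding a unique $\widetilde{\bbeta}_\mS$ provided $\widehat{\bF}_\mS^T\widehat{\bF}_\mS$ is invertible, and (b) choosing a subgradient $\widehat{z}_\mS\in\partial\|\widetilde{\bbeta}_\mS\|_1$ so that the KKT stationarity on $\mS$ is satisfied, then extending to $\widehat{z}_{\mS^c}$ from the stationarity equation on $\mS^c$. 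The standard PDW lemma then guarantees that if $\|\widehat{z}_{\mS^c}\|_\infty<1$ strictly, the pair $(\widetilde{\bbeta}_\mS,0)$ is in fact the unique minimizer of \eqref{eq.lassomat}, giving both uniqueness and the inclusion $\mathcal{S}(\widehat{\bbeta}^\lambda)\subseteq\mathcal{S}(\bbeta^*)$ claimed in~(i).

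Second, I would bound the two key quantities. Using $\widehat{\bu}_t=\widehat{\bF}\bbeta^*+\btau$, the restricted-problem normal equations give
\begin{align*}
\widetilde{\bbeta}_\mS-\bbeta_\mS^*=\Big(\tfrac{1}{NM}\widehat{\bF}_\mS^T\widehat{\bF}_\mS\Big)^{-1}\Big(\tfrac{1}{NM}\widehat{\bF}_\mS^T\btau-\lambda_N \widehat{z}_\mS\Big),
\end{align*}
and the dual variable on $\mS^c$ reads
\begin{align*}
\widehat{z}_{\mS^c}=\widehat{\mathcal{Q}}_N\,\widehat{z}_\mS+\tfrac{1}{\lambda_N NM}\Big(\widehat{\bF}_{\mS^c}^T-\widehat{\mathcal{Q}}_N\widehat{\bF}_\mS^T\Big)\btau.
\end{align*}
To control these I need (1) a high-probability bound on $\tfrac{1}{NM}\|\widehat{\bF}_\mS^T\btau\|_\infty$ and on $\|(\widehat{\bF}_{\mS^c}^T-\widehat{\mathcal{Q}}_N\widehat{\bF}_\mS^T)\btau\|_\infty/(NM)$, (2) a lower bound on the minimum eigenvalue of $\tfrac{1}{NM}\widehat{\bF}_\mS^T\widehat{\bF}_\mS$, and (3) a transfer of the mutual incoherence \eqref{assum2} from $\mathcal{Q}^*$ to $\widehat{\mathcal{Q}}_N$ in $\VERT\cdot\VERT_\infty$. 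For (1), I would use the Local-Polynomial representations \eqref{LP_t}-\eqref{LP_x} to write $\Delta\bu_t$ and $\Delta\bF$ as explicit kernel-weighted sums of $\{\nu_i^n\}$ plus deterministic bias terms controlled by the bandwidths $h_N,w_M$; the bias-variance balance at $h_N=\Theta(N^{-1/7})$ and $w_M=\Theta(M^{-1/7})$ with $M=\Theta(N^{(2P_{\max}+5)/7})$ gives Gaussian concentration of order $N^{-2/7+c}\ln N$ uniformly on the $NM$ grid, accounting for the stretched-exponential tail probability $\mathcal{O}(N^{(2P_{\max}+5)/7}\exp(-N^c/6))$ in the theorem statement. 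Items (2) and (3) are precisely what the two lemmas promised for Section~\ref{Sec6} will supply, by showing $\VERT\widehat{\mathcal{Q}}_N-\mathcal{Q}^*\VERT_\infty\to0$ and $\Lambda_{\min}(\tfrac{1}{NM}\widehat{\bF}_\mS^T\widehat{\bF}_\mS)\geq C_{\min}/2$ on the same good event; combined with \eqref{assum2} these yield $\VERT\widehat{\mathcal{Q}}_N\VERT_\infty\leq 1-\mu/2$.

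Third, I would assemble these ingredients to verify strict dual feasibility: $\|\widehat{z}_{\mS^c}\|_\infty\leq 1-\mu/2+\tfrac{2}{\mu\lambda_N}\cdot\tfrac{C\sqrt{K}\ln N}{N^{2/7-c}}$, which is $<1$ exactly under the prescribed rate $\lambda_N=\Omega(C\sqrt{K}\ln N/(\mu N^{2/7-c}))$. This certifies (i). The $\ell_\infty$ bound \eqref{infbound} then follows by inserting the bounds on $\|\widehat{\bF}_\mS^T\btau\|_\infty/(NM)$ and $\|\widehat{z}_\mS\|_\infty\leq 1$ into the displayed expression for $\widetilde{\bbeta}_\mS-\bbeta_\mS^*$, using $\|\mathbf{v}\|_\infty\leq\sqrt{K}\|\mathbf{v}\|_2$ and the spectral bound on the inverse Gram. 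Finally, part~(ii) is immediate: if $\min_i|(\bbeta_\mS^*)_i|$ exceeds the right-hand side of \eqref{infbound}, then no coordinate of $\widetilde{\bbeta}_\mS$ can cross zero, so its signs match those of $\bbeta_\mS^*$, completing signed-support recovery.

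The main obstacle I anticipate is step~(1), specifically obtaining a sharp, uniform-in-grid tail bound on $\tfrac{1}{NM}\|\widehat{\bF}^T\btau\|_\infty$. The difficulty is threefold: $\btau$ depends on $\bbeta^*$ through $\Delta\bF\bbeta^*$, so its entries are correlated across both spatial and temporal axes via the shared kernel weights; $\widehat{\bF}$ itself is a nonlinear (quadratic) function of the same noise, so $\widehat{\bF}^T\btau$ contains products of dependent Gaussians; and the bias portion of $\btau$ must be controlled at the same rate as the stochastic portion, forcing the precise bandwidth trade-off $h_N,w_M\sim n^{-1/7}$ that dictates the scaling $M=\Theta(N^{(2P_{\max}+5)/7})$ in the statement. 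I expect the argument to decompose $\btau$ into a linear stochastic term, a quadratic stochastic term, and a deterministic bias term, treat the first by Gaussian concentration after diagonalizing the equivalent-kernel covariance, and handle the second via Hanson–Wright followed by a union bound over the $NM$ grid points and the $K=\mathcal{O}(1)$ feature indices.
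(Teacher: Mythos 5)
Your proposal reproduces the paper's PDW architecture faithfully: the restricted-program construction, the derived expressions for $\widetilde{\bbeta}_\mS-\bbeta_\mS^*$ and $\widehat{z}_{\mS^c}$, the three ingredients (residual bound, minimum-eigenvalue transfer, incoherence transfer), and the use of the two Section~\ref{Sec6} lemmas to close the loop between $\bF$ and $\widehat{\bF}$. Items (2) and (3) in your plan match the paper essentially step for step. Where you genuinely diverge is on item (1), the residual bound, and this is also where your plan has a gap.

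The paper never attempts to control $\tfrac{1}{NM}\widehat{\bF}^T\btau$ as a correlated quadratic form. Instead it uses a deliberately coarse reduction (Lemma~\ref{Zlemma}): $\|\widehat{\bF}_{\mS^c}^T\Pi_{\mS^\perp}\btau/(\lambda NM)\|_\infty$ is bounded via the spectral norm of the projection and the normalized Frobenius norm of $\widehat{\bF}$ by $\sqrt{K}\|\btau\|_\infty/\lambda$, so the quadratic interaction between $\widehat{\bF}$ and $\btau$ is eliminated up front and Hanson--Wright is never needed. The remaining task is a sup-norm bound on $\btau$ itself, which by triangle inequality reduces to the uniform convergence (over continuous $t$ and $x$, not just grid points) of the Local-Polynomial estimators $\widehat{u}_t$ and $\widehat{\partial_x^p u}$. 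The paper's technical contribution is exactly there: Mack--Silverman truncation of the LP estimator followed by Tusn\'ady's strong approximation of the two-dimensional empirical process by a Brownian bridge, plus a Gaussian-process argument \`a la Silverman for the bridge term. The parameter $c\in(0,2/7)$ enters precisely by choosing the Tusn\'ady level $r=N^c$, and the tail $\mathcal{O}(N^{(2P_{\max}+5)/7}\exp(-N^c/6))$ with constant $1/6$ comes from the Bretagnolle--Massart constants $b_0=2,\,c_0=1/6$ in that approximation theorem. Your plan attributes these rates to ``Gaussian concentration'' and proposes Hanson--Wright plus a union bound over grid points; neither of these generic tools produces the stretched-exponential $\exp(-N^c/6)$ rate with the $c$-dependence stated in the theorem, nor does a grid union bound give the supremum over the continuum that the lemmas actually prove and use. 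So while your decomposition into linear, quadratic, and bias pieces is a plausible alternative route, it does not, as sketched, reproduce the specific constants or the dependence on $c$, and it adds complexity (quadratic Gaussian forms over a dependent family) that the paper avoids entirely via Lemma~\ref{Zlemma}. To make your route rigorous you would either need to rederive a strong-approximation-style uniform bound for the quadratic terms, or adopt the paper's coarse Cauchy--Schwarz reduction and then invoke the Tusn\'ady/Mack--Silverman machinery for $\|\btau\|_\infty$.

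One small correction to your display for $\widehat z_{\mS^c}$: with the paper's convention $\btau=\Delta\bu_t-\Delta\bF\bbeta^*$ (the text's formula $\btau=\Delta\bF\bbeta^*-\Delta\bu_t$ in~\eqref{residual} has a sign typo relative to $\widehat{\bu}_t=\widehat{\bF}\bbeta^*+\btau$), the projected term carries the opposite sign; this is immaterial for the norm bounds but worth keeping straight when matching to~\eqref{zSc'}.
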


The overall proof sketch of Theorem \ref{thm1} is described in the Subsection \ref{proof_strat}, and relevant technical propositions and lemmas are further provided in Sections \ref{Sec4} and \ref{Sec5}.
Here, we give some important remarks about Theorem~\ref{thm1}.

\begin{enumerate}
    \item The uniqueness claim of $\widehat{\beta}^{\lambda}$ in (\rn{1}) seems trivial since the objective function in \eqref{eq.lassomat} is strictly convex in the regime of $K$ being fixed and $NM\rightarrow{\infty}$.
    However, we need to ensure that minimal eigenvalue condition hold over the estimated feature matrix $\widehat{\bF}$,
    given the assumption \eqref{assum1} for some $C_{\min}>0$.
    We defer this statement as Lemma~\ref{lem.eign'} in Section~\ref{Sec6}  and provide the proof in Appendix~\ref{proof_eigen'}.
    
    \item The first item (\rn{1}) claims that $\ell_{1}$-PsLS does not falsely select the arguments in  that are not in the support of $\bbeta^*$.
    Also note that part (\rn{2}) is a consequence of the sup-norm bound from \eqref{infbound}: as long as minimum value of $|\bbeta^*_{i}|$ over indices $i\in\mathcal{S}$ is not small, $\ell_{1}$-PsLS is signed-support recovery consistent.
    
    \item The asymptotic orders of $M$, $h_N$, and $w_M$ are specifically chosen for simplification. Although there is certain flexibility, the spatial resolution $M$ and the temporal resolution $N$ (as well as $h_N$ and $w_M$) need to be coordinated well to guarantee the support recovery property. 
    This was expected in practice since we need sufficient sampling frequencies both in temporal and space to estimate the underlying dynamics. 
    Here, the Theorem~\ref{thm1} present a rigorous justification for a combination of these resolutions which is sufficient for the support recovery. 

    \item The quantity $c$ is derived from the Tusnady's strong approximation~\cite{tusnady1977remark} where the error of an empirical distribution is compared with a Brownian bridge in tail probability. See Appendix~\ref{app.B1}. 
    With a larger value of $c$, the regularization hyper-parameter $\lambda_{N}$ needs to remain relatively large, but the convergence is faster. 
    Whereas for a smaller value of $c$, we can relax the regularization in the cost of a slower probability convergence rate.

    \item The threshold of $\lambda_{N}$ in the statement of the Theorem shows that when the number of data increases, there is more flexibility in tuning this parameter. If the incoherence parameter $\mu$ is small, or equivalently, the group of correct feature variables and the group of the others are similar, to guarantee that the support of the estimated coefficient vector is contained in the correct one, it suffices to use a large value of $\lambda_{N}$. 
    Such behavior of the threshold is consistent with that described in Theorem 1 of~\cite{wainwright2009sharp}.
    
    \item The upper-bound for the $\ell_\infty$-norm of the coefficient error in~\eqref{infbound} consists of two components. 
    The first one concerns the grid resolution determined by $N$, and the underlying function $u$ as well as the choice of regression kernels encapsulated in the constant $C$. 
    As $N$ increases to $\infty$, this part converges to $0$ without explicit dependence on the choice of feature variables selected by $\ell_1$-PsLS.
    The second component is simple: $K^{3/2}C_{\min}\lambda_{N}$. 
    When $N$ increases, this part does not vary.
    This indicates that asymptotically, $\ell_{1}$-PsLS recovers signed-support of governing PDE, as long as $\min_{1\leq i \leq s}|(\bbeta_{\mathcal{S}}^*)_{i}|>K^{3/2}C_{\min}\lambda_{N}$.
\end{enumerate}

\subsection{Proof Strategy of Theorem \ref{thm1}} \label{proof_strat}
The analysis for the proof of Theorem \ref{thm1} is naturally divided into two steps as follows: 
In the first step, we prove a result analogous to that of the Theorem \ref{thm1} by imposing incoherence assumption on the estimated feature matrix $\widehat{\bF}$. 
Specifically, since $\widehat{\bF}$ is a random matrix, we assume that for some $\mu\in(0,1]$, the event, $\{\VERT \widehat{\mathcal{Q}}_{N}\VERT_{\infty}\leq 1-\mu\}$, holds with some probability at least $P_{\mu}$, for some $P_{\mu}\in (0,1]$. 
Under this assumption, we prove that the success probability of signed-support recovery of $\ell_{1}$-PsLS converges to $P_{\mu}$ with an  exponential decay rate. 
This is formally stated as Proposition \ref{prop1} in Subsection \ref{state_prop1}.
\\ \\
In the second step, we show that the success probability $P_{\mu}$ goes to $1$, given that the ground-truth matrix $\bF$ satisfies assumptions \eqref{assum1} and \eqref{assum2}.
This is equivalent to proving that, given the assumptions \eqref{assum1} and \eqref{assum2} for $\bF$ for some $C_{\min}>0$ and $\mu\in(0,1]$, the same assumptions hold for the estimated $\widehat{\bF}$ in probability.
We state these results formally in Lemmas \ref{lem.eign'} and \ref{lem.incoh'} in Section \ref{Sec6}.

\section{Analysis Under Sample Incoherence Matrix Assumptions} \label{Sec5}
In this section, we provide a proof overview of Proposition~\ref{prop1} and the key technical contribution of our paper. 
All the detailed statements and proofs of the Proposition~\ref{prop1} and its relevant lemmas are relegated to the Appendix for the conciseness.

\subsection{Statement of Proposition} \label{state_prop1}
We establish the signed-support consistency of $\ell_{1}$-PsLS estimator when the assumptions are directly imposed on the estimated feature matrix $\widehat{\bF}$, instead on the ground-truth feature matrix $\bF$. 
More specifically, we assume that there exist some constants $\mu\in(0,1]$ and $C_{\min}>0$, such that the followings hold:
\begin{equation}\label{assum3}\tag{A3}
    \mathbb{P}\bigg[ \left\VERT \widehat{\mathcal{Q}}_{N} \right\VERT_{\infty} \leq 1-\mu\bigg]\geq P_\mu
    ~\text{ and }
    ~\Lambda_{\min}\Big(\frac{1}{NM}\widehat{\bF}_\mS^T\widehat{\bF}_\mS\Big)\geq C_{\min} \quad \text{almost surely}\;.
\end{equation}
Here, $P_{\mu}\in [0,1]$ denotes some probability that $\widehat{\mathcal{Q}}_{N}$ satisfies the incoherence assumption.
Equipped with this assumption, we have the following proposition:

\begin{proposition} \label{prop1}
Given the observed data set $\mathcal{D}$, where the spatial resolution is related to the temporal resolution via $M = \Theta(N^{\frac{2P_{\max}+5}{7}})$, we take the bandwidths of the kernels in~\eqref{eq_K_b} and~\eqref{eq_K_c} as $h_N=\Theta(N^{-\frac{1}{7}})$,  $w_{M} = \Theta(M^{-\frac{1}{7}})$, respectively.
Under the assumptions in \eqref{assum3} imposed on the estimated feature matrix $\widehat{\bF}$, suppose that the sequence of regularization hyper-parameters $\{\lambda_{N}\}$ satisfies $\lambda_{N} = \Omega{ \bigg( \frac{C\sqrt{K}\ln N}{\mu N^{2/7-c}} \bigg)}$ for some constant $C>0$ and $0<c<\frac{2}{7}$ independent of $N$. 
Then, the following properties hold : 
\begin{enumerate} [label=(\roman*)]
    \item  With probability greater than $P_{\mu}-\mathcal{O}\left(N^{{\frac{2P_{\max}+5}{7}}}\exp\big(-\frac{1}{6}N^{c}\big) \right)\to P_{\mu}$ as $N\rightarrow{\infty}$, the $\ell_{1}$-PsLS method \eqref{eq.lassomat} has a unique minimizer $\widehat{\bbeta}^{\lambda}\in\mathbb{R}^{K}$ with its support contained within the true support, that is $\mathcal{S}(\widehat{\bbeta}^{\lambda})\subseteq\mathcal{S}(\bbeta^*)$.
    
    \item With probability greater than $1-\mathcal{O}\left(N^{{\frac{2P_{\max}+5}{7}}}\exp\big(-\frac{1}{6}N^{c}\big) \right)\to 1$ as $N\rightarrow{\infty}$, $\widehat{\bbeta}^{\lambda}$ satisfies the $\ell_{\infty}$ bound:
    \begin{align}
        \left\|\widehat{\bbeta}_\mS^\lambda-\bbeta_\mS^*\right\|_\infty\leq K^{3/2}C_{\min}\left(C\frac{\ln N}{N^{2/7-c}}+\lambda_N\right)\;.\label{infbound1}
    \end{align}
    
    \item Additionally, if the minimum value of model parameter supported on $\mathcal{S}$ is greater than the upper-bound of \eqref{infbound1}, that is $\min_{1 \leq i\leq s}|(\bbeta_{\mathcal{S}}^*)_{i}|>K^{3/2}C_{\min}\left(C\frac{\ln N}{N^{2/7-c}}+\lambda_N\right)$, then $\widehat{\bbeta}^{\lambda}$ has a correct signed-support. (i.e., $\mathbb{S}_{\pm}(\widehat{\bbeta}^{\lambda})=\mathbb{S}_{\pm}(\bbeta^{*})$)
\end{enumerate}
\end{proposition}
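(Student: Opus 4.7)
My plan is to run the Primal-Dual Witness (PDW) construction for the $\ell_1$-PsLS program~\eqref{eq.lassomat}. I first build the oracle estimator $\widetilde{\bbeta}$ by setting $\widetilde{\bbeta}_{\mS^c}=0$ and letting $\widetilde{\bbeta}_{\mS}$ solve the $\ell_1$-penalized least squares restricted to $\mS$. Picking a sign witness $\widehat{\bz}_{\mS}\in\partial\|\widetilde{\bbeta}_{\mS}\|_1$ and defining $\widehat{\bz}_{\mS^c}$ via the $\mS^c$-block of the KKT equation, the PDW lemma promotes $\widetilde{\bbeta}$ to the unique global minimizer $\widehat{\bbeta}^\lambda$ of~\eqref{eq.lassomat} on the event $\{\|\widehat{\bz}_{\mS^c}\|_\infty<1\}$, yielding $\mathcal{S}(\widehat{\bbeta}^\lambda)\subseteq\mS$. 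Substituting $\widehat{\bu}_t=\widehat{\bF}\bbeta^*+\btau$ into the KKT system and inverting the $\mS\times\mS$ block produces the two workhorse identities
\[
\widetilde{\bbeta}_{\mS}-\bbeta^*_{\mS}=\bigl(\widehat{\bF}_{\mS}^T\widehat{\bF}_{\mS}\bigr)^{-1}\widehat{\bF}_{\mS}^T\btau \;-\; NM\lambda_N\bigl(\widehat{\bF}_{\mS}^T\widehat{\bF}_{\mS}\bigr)^{-1}\widehat{\bz}_{\mS}, \qquad \widehat{\bz}_{\mS^c}=\widehat{\mathcal{Q}}_N\widehat{\bz}_{\mS}+\tfrac{1}{NM\lambda_N}\widehat{\bF}_{\mS^c}^T\bigl(I-\widehat{\Pi}_{\mS}\bigr)\btau,
\]
where $\widehat{\Pi}_{\mS}=\widehat{\bF}_{\mS}(\widehat{\bF}_{\mS}^T\widehat{\bF}_{\mS})^{-1}\widehat{\bF}_{\mS}^T$.

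From here the three claims split neatly. For~(ii), the almost-sure lower bound $\Lambda_{\min}(\widehat{\bF}_{\mS}^T\widehat{\bF}_{\mS}/NM)\geq C_{\min}$ from~\eqref{assum3}, combined with $\|\cdot\|_\infty\leq\sqrt{s}\|\cdot\|_2$ and $\|\widehat{\bz}_{\mS}\|_\infty\leq 1$, turns the first identity into~\eqref{infbound1} once $\tfrac{1}{NM}\|\widehat{\bF}_{\mS}^T\btau\|_\infty$ is controlled at rate $\ln N/N^{2/7-c}$. For~(i), the triangle inequality on the second identity gives $\|\widehat{\bz}_{\mS^c}\|_\infty\leq(1-\mu)+\tfrac{1}{NM\lambda_N}\|\widehat{\bF}_{\mS^c}^T(I-\widehat{\Pi}_{\mS})\btau\|_\infty$ on the incoherence event of~\eqref{assum3}; the lower bound $\lambda_N=\Omega(\sqrt{K}\ln N/(\mu N^{2/7-c}))$ is precisely what forces the remainder below $\mu/2$, producing strict dual feasibility and hence $\widehat{\bbeta}^\lambda=\widetilde{\bbeta}$. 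Claim~(iii) is then a one-line consequence: under the $\bbeta^*_{\min}$-condition, the bound~\eqref{infbound1} is too small to flip or zero out any coordinate of $\widetilde{\bbeta}_{\mS}$.

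The principal obstacle is the uniform tail estimate
\[
\mathbb{P}\!\left[\tfrac{1}{NM}\|\widehat{\bF}^T\btau\|_\infty \geq C\,\tfrac{\ln N}{N^{2/7-c}}\right] = \mathcal{O}\bigl(N^{(2P_{\max}+5)/7}\exp(-N^c/6)\bigr),
\]
which drives both the probability levels and the rate. Unlike the standard Lasso analysis of~\cite{wainwright2009sharp}, the residual $\btau=\Delta\bF\,\bbeta^*-\Delta\bu_t$ is neither centered nor independent across coordinates: the Local-Polynomial reconstructions carry nonvanishing bias, and their stochastic parts at nearby grid points share samples through the kernels $\mathcal{K}_{h_N},\mathcal{K}_{w_M}$, while the cross-column structure of $\Delta\bF$ entangles spatial and temporal fluctuations. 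My plan is to decompose each component of $\btau$ into a deterministic bias piece bounded via Taylor expansion with the carefully tuned bandwidths $h_N=\Theta(N^{-1/7})$, $w_M=\Theta(M^{-1/7})$, and a mean-zero fluctuation piece controlled by Tusnady's strong approximation, which replaces the Gaussian empirical sums by a Brownian bridge and supplies the $\exp(-N^c/6)$ tail (with $c$ inherited from the Tusnady constant, as noted in the paper). A union bound over the $\Theta(N^{(2P_{\max}+5)/7})$ grid-aligned sup-norm evaluations produces the stated prefactor, completing the argument.
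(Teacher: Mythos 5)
Your proposal mirrors the paper's proof almost step for step: the same Primal--Dual Witness construction, the same two KKT identities for $\widetilde{\bbeta}_{\mS}-\bbeta^*_{\mS}$ and $\widehat{\bz}_{\mS^c}$, the same use of the minimum-eigenvalue and incoherence parts of \eqref{assum3} to handle claims (ii) and (i) respectively, and the same reduction to a uniform tail bound on $\|\btau\|_\infty$ via Mack--Silverman-style truncation plus Tusnady's strong approximation to a Brownian bridge. No essential difference in strategy.
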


We remark that the first item (\rn{1}) in Proposition \ref{prop1} holds with probability $P_{\mu}\leq 1$ asymptotically, while the second item (\rn{2}) holds with probability $1$ asymptotically.
They are not contradictory, since (\rn{1}) focuses on the estimation errors on entries within the true support $\mS$, whereas (\rn{2}) describes the support recovery of the coefficient vector over all indices.
Technically speaking, proof of (\rn{1}) is involved with mutual incoherence condition in \eqref{assum3}, whereas (\rn{2}) is involved with minimum-eigen value condition on $\widehat{\bF}$ in \eqref{assum3}.

\subsection{Proof Overview of Proposition ~\ref{prop1}} \label{Subsec5.2}
Readers can find the proof of \eqref{infbound1} in the Appendix~\ref{B6}.
Here, we focus on providing the high-level idea on the proof of (\rn{1}) of Propostion~\ref{prop1}.
The most important ingredient for the success of PDW construction is to establish the \textit{strict dual feasibility} of the dual vector $\widehat{z}$, when $\widehat{z}\in\partial\|\widehat{\bbeta}^{\lambda}\|_{1}$, where $\partial\|\widehat{\bbeta}^{\lambda}\|_{1}$ is a sub-differential set of $\|\cdot\|_{1}$ evaluated at $\widehat{\bbeta}^{\lambda}$.
In other words, we need to ensure that  $\|\widehat{z}_{\mathcal{S}^{c}}\|_{\infty}<1$ with high probability. 
(See Appendix \ref{app.pdw}.) 
Through Karush–Kuhn–Tucker (KKT) condition of the optimal pair $(\widehat{\bbeta}^{\lambda},\widehat{z})$ of \eqref{eq.lassomat} and settings of PDW construction, we can explicitly derive the expression of the dual vector $\widehat{z}$ supported on the complement of the support set $\mathcal{S}$ as follows:
\begin{align} 
    \widehat{\mathbf{z}}_{\mS^c}=\widehat{\bF}_{\mS^c}^T\widehat{\bF}_\mS(\widehat{\bF}_{\mS}^T\widehat{\bF}_\mS)^{-1}\mathbf{\widehat{z}}_\mS+
    \underbrace{\frac{1}{\lambda_{N} MN}\widehat{\bF}_{\mS^c}^T{\bf{\Pi}_{\mS^{\perp}}}(\Delta \bu_t-\Delta \bF_\mS\bbeta_\mS^*)}_{:=\Tilde{Z}_{\mathcal{S}^{c}}} \;,\label{zSc'}
\end{align}
where ${\bf{\Pi}_{\mS^{\perp}}}$ is an orthogonal projection operator on the column space of $\widehat{\bF}_\mS$.
By the mutual incoherence condition in \eqref{assum3}, the first term of the right-hand side in~\eqref{zSc'} is upper-bounded by $1-\mu$ for some $\mu\in(0,1]$, with some probability $P_{\mu} \in [0,1]$.
The remaining task is to control the tail probability of $\Tilde{Z}_{j}$ for $j\in\mathcal{S}^{c}$: that is to ensure $\mathbb{P}\big[ \max_{j\in\mathcal{S}^{c}}|\Tilde{Z}_{j}| \geq \mu \big]\rightarrow{0}$ with some exponential decay rate.
With the help of Lemma \ref{Zlemma} in the Appendix, controlling the probability $\mathbb{P}\big[ \|\Tilde{Z}_{\mathcal{S}^{c}}\|_{\infty} \geq \mu \big]$ reduces to controlling $\mathbb{P}\big[ \|\Delta \bF_\mS\bbeta_\mS^*-\Delta \bu_{t}\|_{\infty} \geq \mu\frac{\lambda_{N}}{\sqrt{K}} \big]$.
Controlling the bound on $\mathbb{P}\big[\|\btau\|_{\infty} \geq \varepsilon \big]$ for some $\varepsilon>0$ is challenging, since the exact form of the residual distribution $\btau$ is unknown. 
(Note that $\btau=\Delta \bF_\mS\bbeta_\mS^*-\Delta \bu_t$ since $\bu_{t}=\bF\bbeta^*$.)

We circumvent this difficulty by using the following inequality: 
for some thresholds $\varepsilon_{N}>0$ and $\varepsilon_{M}>0$, both of which go to $0$ as $N$ and $M$ tends to $\infty$, we have,
\begin{align*}
    \mathbb{P} &\big[ \left\| \btau \right\|_{\infty} \geq \varepsilon_{N}+\varepsilon_{M}  \big] \\
    &\leq \mathbb{P}\bigg[ \max_{0 \leq i \leq M-1}\sup_{t\in[0,T_{\max})}|\Delta u_t(X_i,t)| \geq \varepsilon_{N} \bigg] + 
    \mathbb{P}\bigg[ \max\limits_{1 \leq k \leq s \atop 0 \leq n \leq N-1 }\sup_{x\in[0, X_{\max})}|\Delta F_k(x,t_n)| \geq \frac{\varepsilon_{M}}{s\|\bbeta^*\|_{\infty}} \bigg] \\
    &\leq M\cdot\mathbb{P}\bigg[ \sup_{t\in[0,T_{\max})}|\Delta u_t(X_i,t)| \geq \varepsilon_{N} \bigg] + 
    sN\cdot\mathbb{P}\bigg[ \sup_{x\in[0, X_{\max})} |\Delta F_k(x,t_n)| \geq \frac{\varepsilon_{M}}{s\|\bbeta^*\|_{\infty}} \bigg].
\end{align*}
The above inequality naturally leads us to study the uniform convergence of Local-Polynomial estimator to its ground-truth function of interest. 
Say, for sufficiently large enough grid size of temporal dimension $N$, for some $\varepsilon_{N}\geq 0$ that is $h_{N}$-dependent threshold and $X_{i}\in[0,X_{\max})$, we will achieve
\begin{equation} \label{unif_lp}
    \mathbb{P}\Bigg[ \sup_{t\in[0,T_{\max})}\left| \widehat{\bu}_t(X_i,t) - \bu_t(X_i,t) \right| > \varepsilon_{N} \Bigg] \rightarrow{0},
\end{equation}
with an exponential decay rate.
As for obtaining the exponential decay rate in \eqref{unif_lp}, we defer the detailed explanation with some intuitions in the following Subsection.
It turns out that thresholds $\varepsilon_{N}$ and $\varepsilon_{M}$ are functions of bandwidth parameters $h_{N}$ and $w_{M}$ in \eqref{LP_t} and \eqref{LP_x}. 
We choose correct orders of $h_{N}$ and $w_{M}$ so that we can ensure that the thresholds $\varepsilon_{N}$ and $\varepsilon_{M}$ go to zero. Then, with the proper choice on the order of $\lambda_{N}$ together with $\mathbb{P}\big[ \|\btau\|_{\infty} \geq \mu\frac{\lambda_{N}}{\sqrt{K}} \big]$, we conclude the proof.

\subsection{Technical Contribution}
Several researchers have tried to achieve uniform convergence of Local-Polynomial or kernel smoothing estimators in almost sure sense. 
See the works of Masry~\cite{masry1996multivariate} and Li and Hsing~\cite{li2010uniform}. 
However, to the best of the authors' knowledge, uniform convergence of Local-Polynomial estimator with an explicit decaying probability rate has not been studied in the literature.
We provide it as a technical contribution of the present paper.
Readers can find the exact statements of these results for the estimators $\widehat{\bu}_{t}$ and $\widehat{\partial_{x}^{p}u}$ for $p\geq 0$ in the Appendix B.1 and B.2 stated as Lemma~\ref{Lemm.Dt} and Lemma~\ref{Lemm.Dx}, respectively.

Here, we provide a high-level idea of the proof of Lemma~\ref{Lemm.Dt}.
First, we observe that the higher-order Local-Polynomial smoothing is asymptotically equivalent to higher-order kernel smoothing through equivalent kernel theory~\cite{fan1997local}.
See \eqref{LP_t} and \eqref{LP_x} for their equivalences in mathematical form with kernel smoothing estimators.
Second, we employ Mack and Silverman's \cite{mack1982weak} truncation idea on the Local-Polynomial estimator and decompose $\widehat{\bu}_t(X_i,t) - \bu_t(X_i,t)$ into three parts as follows:
\begin{align*}
    \widehat{\bu}_{t}-\bu_{t} 
        = \underbrace{\bigg( \widehat{\bu}_{t} - \widehat{\bu_t}^{B_N'} - \mathbb{E} \big( \widehat{\bu}_{t} - \widehat{\bu_t}^{B^{'}_N} \big) \bigg)}_{\text{ Asymptotic deviation of truncation error}}
        + \underbrace{\bigg( \widehat{\bu_t}^{B^{'}_N} - \mathbb{E} \widehat{\bu_t}^{B^{'}_N} \bigg)}_{\substack{\text{Asymptotic deviation of} \\ \text{truncated estimator}}}
        + \underbrace{\bigg( \mathbb{E}\widehat{\bu}_{t} - \bu_{t} \bigg)}_{\text{Asymptotic bias}},
\end{align*}
where $B'_{N}$ is some increasing sequence in $N$, and $\widehat{\bu_t}^{B_N'}$ denotes the truncated Local-Polynomial estimator of $\bu_{t}$. 
We control the \emph{sup} over $t\in[0,T_{\max})$ on each of these three components.
The last component, \textit{Asymptotic bias} of $\widehat{\bu}_{t}$ can be obtained through the classical result from \cite{fan1997local,fan2018local}.
The exponential decay rate comes from the first two components as follows:

\begin{enumerate}
    \item \textit{Asymptotic deviation of truncation error} can be decomposed into two parts. 
    The first part, which is $\widehat{\bu}_{t} - \widehat{\bu_t}^{B_N'}$, can be easily controlled via chernoff bound of gaussian random variable. by using the definition of truncated estimator $\widehat{\bu_t}^{B_N'}$.
    The second part, which is the expected difference $\mathbb{E} \big( \widehat{\bu}_{t} - \widehat{\bu_t}^{B^{'}_N} \big)$, can be bounded by some deterministic function of $B_{N}'$ and $h_{N}$ using the similar arguments in Proposition $1$ of \cite{mack1982weak}.
    
    \item \textit{Asymptotic deviation of truncated estimator} is decomposed into two components as well: (\rn{1}) Brownian Bridge and (\rn{2}) difference between some two-dimensional empirical process and the Brownian Bridge.
    (\rn{1}) can be controlled via uniform convergence of Gaussian Process using the arguments similar to \cite{silverman1978weak}, together with simple Markov inequality.
    (\rn{2}) can be controlled via Tusnady's strong uniform approximation theory \cite{mack1982weak,tusnady1977remark}, stating that the two-dimensional empirical process can be well approximated by a certain solution path of two-dimensional Brownian-bridge.
\end{enumerate}
Same ideas can be employed for the uniform convergence of $\widehat{\partial_x^pu}$ to $\partial_x^p u$ for $p\geq 0$.

\section{Uniform Convergence of Sample Incoherence Matrix} \label{Sec6}
In this section, we provide two lemmas that can complete the proof of Theorem \ref{thm1}.
Here, the minimum-eigenvalue and incoherence assumptions are imposed on the ground-truth feature matrix $\bF$, instead on the estimated feature matrix $\widehat{\bF}$.
See (\ref{assum1}) and (\ref{assum2}).
That is, there exist $C_{\min}>0$ and $\mu\in(0,1]$ such that the followings hold for the unknown support set $\mS$:
\begin{equation*}
     \Lambda_{\min}\Big(\frac{1}{NM}{\bF}_\mS^T{\bF}_\mS\Big)\geq C_{\min} \quad \text{and}
     \quad 
     \left\VERT \mathcal{Q}^* \right\VERT_{\infty}\leq 1-\mu.
\end{equation*}
Equipped with the above assumptions, we can formally show that success probability of the sample incoherence condition $P_{\mu}$ in (\ref{assum3}) tends to $1$ as $N\rightarrow{\infty}$.
Note that this result is not an immediate consequence of classical random matrix theory (see \cite{tao2012topics, davidson2001local}), since the elements in $\widehat{\bF}^{T}\widehat{\bF}$ are highly dependent. 

To prove the result, we first need the following lemma asserting that if there exists $C_{\text{min}}>0$ such that the minimum eigen-value condition holds for $\bF_{\mS}$, then the sample minimum eigen-value condition holds with probability converging to $1$ with an exponential decay rate.

\begin{lemma}\label{lem.eign'}
 Suppose that the assumption (\ref{assum1}) holds with some constant $C_{\text{min}}>0$ and $0<c<\frac{2}{7}$, then with probability at least
 $1-\mathcal{O}(N\exp(-\frac{1}{6}N^c))$, we have,
 \begin{equation*}
     \Lambda_{\min}\Big(\frac{1}{NM}\widehat{\bF}_\mS^T\widehat{\bF}_\mS\Big)\geq C_{\min}\;.
 \end{equation*}
\end{lemma}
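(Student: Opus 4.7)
The plan is to prove Lemma \ref{lem.eign'} via a Weyl-type perturbation argument that reduces the statement to a uniform control of the entries of $\widehat{\bF}_\mS - \bF_\mS$. Writing $\widehat{\bF}_\mS = \bF_\mS + \Delta \bF_\mS$, we have
\begin{align*}
\frac{1}{NM}\widehat{\bF}_\mS^{T}\widehat{\bF}_\mS
= \frac{1}{NM}\bF_\mS^{T}\bF_\mS
+ \underbrace{\frac{1}{NM}\Big(\bF_\mS^{T}\Delta\bF_\mS + \Delta\bF_\mS^{T}\bF_\mS + \Delta\bF_\mS^{T}\Delta\bF_\mS\Big)}_{=: E_N}.
\end{align*}
By Weyl's inequality, $\Lambda_{\min}(\tfrac{1}{NM}\widehat{\bF}_\mS^{T}\widehat{\bF}_\mS) \geq \Lambda_{\min}(\tfrac{1}{NM}\bF_\mS^{T}\bF_\mS) - \VERT E_N\VERT_2 \geq C_{\min} - \VERT E_N\VERT_2$, so it suffices to show that $\VERT E_N\VERT_2 \to 0$ in probability at the claimed rate (up to absorbing a fixed constant into $C_{\min}$ when stating the bound).

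Since $|\mS| = s$ is fixed, $E_N$ is $s\times s$ and $\VERT E_N\VERT_2 \leq s\,\VERT E_N\VERT_{\infty,\infty}$. A typical entry has the form $\frac{1}{NM}\sum_{i,n}\big[F_j\Delta F_k + \Delta F_j F_k + \Delta F_j \Delta F_k\big](X_i,t_n)$. Because $u$ and its derivatives up to order $P_{\max}$ are bounded on $\Omega$, each of these Riemann-type sums is dominated by $C\cdot \max_{j\in\mS}\max_{i,n}|\Delta F_j(X_i,t_n)|$ plus a quadratic term in the same quantity. For the product features $F_j = \partial_x^p u\,\partial_x^q u$, I would further decompose
\begin{equation*}
\Delta F_j = \widehat{\partial_x^p u}\,\widehat{\partial_x^q u} - \partial_x^p u\,\partial_x^q u = \partial_x^p u\cdot \Delta\partial_x^q u + \partial_x^q u \cdot \Delta\partial_x^p u + \Delta\partial_x^p u\cdot \Delta\partial_x^q u,
\end{equation*}
so that controlling the single-derivative deviations $|\widehat{\partial_x^p u} - \partial_x^p u|$ suffices.

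For these single-derivative deviations I invoke Lemma \ref{Lemm.Dx}, which guarantees $\sup_{x\in[0,X_{\max})}|\widehat{\partial_x^p u}(x,t_n) - \partial_x^p u(x,t_n)|$ is less than some threshold $\varepsilon_M \to 0$ with probability at least $1 - \exp(-\tfrac{1}{6}N^{c})$, for each fixed $t_n$ and each $0 \leq p \leq P_{\max}$. A union bound over the $N$ temporal grid points and over the $\mathcal{O}(P_{\max}^{2})$ choices of $(p,q)$ then yields $\VERT E_N\VERT_{\infty,\infty} \leq C' \varepsilon_M + C''\varepsilon_M^{2}$ with probability at least $1 - \mathcal{O}(N\exp(-\tfrac{1}{6}N^{c}))$. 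The calibrated bandwidths $w_M = \Theta(M^{-1/7})$ and the scaling $M = \Theta(N^{(2P_{\max}+5)/7})$ make $\varepsilon_M$ decay at a polynomial rate in $N$, so $\VERT E_N\VERT_2 = o(1)$ on this high-probability event.

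The main obstacle I anticipate is the careful bookkeeping: matching the union-bound factor $N$ (from the temporal points) with the exponential decay in Lemma \ref{Lemm.Dx} so that the final probability remains $\mathcal{O}(N\exp(-\tfrac{1}{6}N^{c}))$, while also verifying that the deterministic threshold $\varepsilon_M$ implied by that lemma is indeed $o(1)$ under the prescribed bandwidth and resolution scalings. A secondary subtlety is handling the product features, where the cross term $\Delta\partial_x^p u\cdot\Delta\partial_x^q u$ produces a quadratic dependence on $\varepsilon_M$ that must be shown not to dominate the leading linear term under the chosen rates.
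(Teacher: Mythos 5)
Your proposal is correct and takes essentially the same route as the paper's own proof: a Weyl-type perturbation bound reducing the claim to controlling $\frac{1}{NM}\VERT\widehat{\bF}_\mS^T\widehat{\bF}_\mS - \bF_\mS^T\bF_\mS\VERT_2$, then passing through the Frobenius and entrywise $(\infty,\infty)$-norms (using that $s$ is fixed), and finally a union bound over the $N$ temporal grid points combined with the uniform-in-$x$ convergence of the Local-Polynomial derivative estimators at the rate coming from Lemma~\ref{Lemm.Dx} and the bandwidth/resolution calibration. The only cosmetic difference is in the bookkeeping of product features: you expand $\Delta F_j$ into linear and cross terms in the single-derivative deviations and then multiply two such expansions, whereas the paper packages the same computation into Corollaries~\ref{cor.doublespace}--\ref{cor.Dxxxx}, which directly bound the sup-norm error of products of up to four estimated derivatives; both reductions are equivalent. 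You also correctly flag the two delicate points the paper glosses over, namely that the stated conclusion $\Lambda_{\min}\geq C_{\min}$ really follows only after absorbing the vanishing perturbation $\varepsilon_M^{\max}$ into the constant, and that the final exponential rate $\mathcal{O}(N\exp(-\tfrac16 N^c))$ requires threading the choices $r=N^c$, $\alpha=N^{2c}$, $B_M=M^{(P_{\max}+2)/(2P_{\max}+5)}$ through the simplification step so that the $M\exp(-B_M^2/2\sigma^2)$ and $w_M^\alpha$ contributions are subdominant.
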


\noindent With the help of Lemma \ref{lem.eign'}, we can show that the sample incoherence condition holds with high probability, given that there exists $\mu\in(0,1]$ for the ground-truth version of (\ref{assum2}).

\begin{lemma}\label{lem.incoh'}
 Suppose that the assumption (\ref{assum2}) holds with some constant $\mu\in(0,1]$ and $0<c<\frac{2}{7}$, then with probability at least
 $1-\mathcal{O}(N\exp(-\frac{1}{6}N^c))$, we have,
 \begin{equation*}
     \left\VERT \widehat{\mathcal{Q}}_{N} \right\VERT_{\infty}\leq 1-\mu\;.
 \end{equation*}
\end{lemma}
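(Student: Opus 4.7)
The plan is a matrix perturbation argument anchored on Lemma \ref{lem.eign'} and the uniform convergence of Local-Polynomial estimators discussed in Subsection \ref{Subsec5.2}. Write $A := \bF_\mS^T\bF_\mS/(NM)$, $B := \bF_{\mS^c}^T\bF_\mS/(NM)$ and their sample counterparts $\widehat{A}, \widehat{B}$, so that $\mathcal{Q}^* = BA^{-1}$ and $\widehat{\mathcal{Q}}_N = \widehat{B}\widehat{A}^{-1}$. The resolvent identity $\widehat{A}^{-1} - A^{-1} = A^{-1}(A - \widehat{A})\widehat{A}^{-1}$ yields the decomposition
\begin{equation*}
\widehat{\mathcal{Q}}_N - \mathcal{Q}^* \;=\; (\widehat{B} - B)\,\widehat{A}^{-1} \;+\; \mathcal{Q}^{*}\,(A - \widehat{A})\,\widehat{A}^{-1}.
\end{equation*}
Taking $\VERT\cdot\VERT_\infty$ and using (\ref{assum2}), this reduces the claim to the bound
\begin{equation*}
\bigl[\,\VERT \widehat{B} - B\VERT_\infty \;+\; (1-\mu)\,\VERT \widehat{A} - A\VERT_\infty\,\bigr]\cdot\VERT\widehat{A}^{-1}\VERT_\infty \;\leq\; \mu,
\end{equation*}
with probability at least $1 - \mathcal{O}(N\exp(-N^{c}/6))$.

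The factor $\VERT\widehat{A}^{-1}\VERT_\infty$ is handled by Lemma \ref{lem.eign'}: on its high-probability event $\Lambda_{\min}(\widehat{A}) \geq C_{\min}$, and since $\widehat{A}\in\mathbb{R}^{s\times s}$ with $s$ fixed, we get $\VERT\widehat{A}^{-1}\VERT_\infty \leq \sqrt{s}\,\VERT\widehat{A}^{-1}\VERT_2 \leq \sqrt{s}/C_{\min}$, a deterministic constant. For the other two factors, every entry of $A-\widehat{A}$ or $B-\widehat{B}$ is an average of $NM$ products $F_iF_j - \widehat{F}_i\widehat{F}_j$; using $\widehat{f}\widehat{g} - fg = \widehat{f}(\widehat{g} - g) + (\widehat{f} - f)g$ and the uniform boundedness of the true features $F_j$ on $\Omega$ (by smoothness of $u$), the entry is controlled by $\max_j\sup_{(x,t)\in\Omega} |\widehat{F}_j(x,t) - F_j(x,t)|$ up to a constant. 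Because each $F_j$ is a product of at most two factors drawn from $\{1,\partial_x^p u\}_{0\leq p\leq P_{\max}}$ and its estimator is built correspondingly from the Local-Polynomial estimators $\widehat{\partial_x^p u}$, applying Lemma \ref{Lemm.Dx} factor by factor (with one further application of the product identity) yields
\begin{equation*}
\mathbb{P}\!\left[\max_{1\leq j\leq K}\sup_{(x,t)\in\Omega}\bigl|\widehat{F}_j(x,t) - F_j(x,t)\bigr| \geq \varepsilon_N\right] \;\leq\; \mathcal{O}\!\bigl(N\exp(-N^{c}/6)\bigr)
\end{equation*}
for a threshold $\varepsilon_N = \Theta(N^{-(2/7-c)}\ln N)$ under the stated bandwidth schedule. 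Since $K$ is fixed, summing entries and taking the matrix infinity norms of $A-\widehat{A}$ and $B-\widehat{B}$ yields $\VERT\widehat{A}-A\VERT_\infty, \VERT\widehat{B}-B\VERT_\infty = \mathcal{O}(\varepsilon_N)$ on that event. Multiplying by $\sqrt{s}/C_{\min}$ gives a quantity that is $o(\mu)$ for $N$ sufficiently large, and a final union bound over the event of Lemma \ref{lem.eign'} closes the argument.

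The routine algebra of the perturbation identity and the eigenvalue bound are straightforward; the real obstacle is the probabilistic accounting for the uniform-norm bound on $\widehat{F}_j - F_j$. One must ensure that the polynomially-many union-bound contributions (over the $K$ columns and over the $NM$ grid cells implicit in passing from supremum-over-$\Omega$ to per-entry statements) do not overwhelm the exponential tail $\exp(-N^{c}/6)$. This is precisely where the coordinated choice $M = \Theta(N^{(2P_{\max}+5)/7})$, $h_N = \Theta(N^{-1/7})$, $w_M = \Theta(M^{-1/7})$, together with $0 < c < 2/7$, is indispensable: the threshold $\varepsilon_N$ decays, while the probabilistic tail in Lemma \ref{Lemm.Dx} swallows the $K\cdot NM$ union cost without degrading the $1-\mathcal{O}(N\exp(-N^{c}/6))$ rate inherited from Lemma \ref{lem.eign'}.
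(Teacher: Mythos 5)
Your proof takes essentially the same route as the paper: a matrix perturbation decomposition of $\widehat{\mathcal{Q}}_{N}$ around $\mathcal{Q}^*$, with the inverse factor controlled by Lemma~\ref{lem.eign'} and the perturbation controlled via uniform convergence of the Local-Polynomial estimators (Lemma~\ref{Lemm.Dx} and its product-term corollaries). Writing $A=\bF_\mS^T\bF_\mS/(NM)$, $B=\bF_{\mS^c}^T\bF_\mS/(NM)$, the paper splits $\widehat{B}\widehat{A}^{-1}$ into four terms $\bf{T_1}+\bf{T_2}+\bf{T_3}+\bf{T_4}$; your two-term resolvent identity is an exact algebraic regrouping of the same thing, since $\bf{T_2}+\bf{T_3}=(\widehat{B}-B)\widehat{A}^{-1}$ and the paper's re-factorization of $\bf{T_1}$ is precisely $\mathcal{Q}^*(A-\widehat{A})\widehat{A}^{-1}$. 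So there is no genuine difference in method.

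One detail in your stated reduction is weaker than what you (and the paper) actually need: proving $\VERT\widehat{\mathcal{Q}}_{N}-\mathcal{Q}^*\VERT_\infty\leq\mu$ together with $\VERT\mathcal{Q}^*\VERT_\infty\leq 1-\mu$ only yields $\VERT\widehat{\mathcal{Q}}_{N}\VERT_\infty\leq 1$, not $\leq 1-\mu$, and $\leq 1$ does not secure strict dual feasibility. What your subsequent quantitative estimate actually gives, and what the paper's proof also gives (each $\VERT\bf{T_i}\VERT_\infty\leq\varepsilon_M^{\max''}\to 0$), is $\VERT\widehat{\mathcal{Q}}_{N}\VERT_\infty\leq 1-\mu+o(1)$. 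This is fine for the intended conclusion provided the $o(1)$ slack is absorbed (e.g., the lemma is applied with any $\mu'<\mu$, or one observes that eventually $\VERT\widehat{\mathcal{Q}}_{N}\VERT_\infty\leq 1-\mu/2<1$); I'd recommend saying this explicitly rather than posing the target as ``$\leq\mu$''.
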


Verification of Lemma \ref{lem.incoh'} automatically leads to the complete proof of Theorem \ref{thm1}, together with Proposition \ref{prop1}.
Therefore, as long as the two assumptions (\ref{assum1}) and (\ref{assum2}) hold for $\bF$, with sufficiently fine-grained grid points over the function $u(X,t)$, $\ell_1$-PsLS can always find the correct signed-support of the given PDE model, with the minimum absolute value of $\bbeta^*_{\mathcal{S}}$ not too close to zero.

\begin{remark}
\noindent\textbf{(Technical Difficulties of Lemma \ref{lem.eign'} and \ref{lem.incoh'}.)} 
The proof procedure is involved with controlling the tail probability of difference between inner-product of two arbitrary columns of $\widehat{\bF}$ and inner-product of the two corresponding columns of ground-truth $\bF$. 
This problem is challenging even if the exact distribution of any entries of $\widehat{\bF}$ is known, since the distribution of $\sum_{k=1}^{NM}\widehat{F}_{ki}\widehat{F}_{kj}$ needs to be derived.
We circumvent this problem by taking the advantage of the uniform convergence result of $\widehat{\partial_x^pu}$ for any $p\geq 0$. 
\end{remark}

\section{Numerical Experiments} \label{sec_num}
In the first subsection, two PDE models and data-generating processes of respective models are introduced.
In the next subsection, we verify the main statements of the Theorem \ref{thm1} through numerical experiments over the PDE models described in Subsection \ref{sec_expsetting}. 
The impact of $\bbeta^*_{\min}$-condition in the signed-support recovery of $\ell_{1}$-PsLS is numerically explored in subsection \ref{ssec7.3}.

\subsection{Experimental Setting} \label{sec_expsetting}
In this subsection, we provide detailed descriptions on 
(\rn{1}) two popular PDE models that we are going to work on throughout the Section \ref{sec_num}, 
and on (\rn{2}) how to generate the data from respective models, and (\rn{3}) how to design the regression problem for the experiments to be presented.

\subsubsection{Model Specification and Data Generation}\label{subsec.PDEmodels}
\noindent\textbf{Viscous Burgers' equation} is a fundamental second-order semilinear PDE which is frequently employed to model physical phenomena in fluid dynamics~\cite{bonkile2018systematic} and nonlinear acoustic in dissipative media~\cite{rudenko1975theoretical}. 
Its general form is
$$u_t = -uu_x + \nu u_{xx}$$
where $\nu>0$ is the diffusion coefficient which characterizes physical quantities such as viscosity of fluid. 
Specifically, when $\nu=0$, it becomes an inviscid Burgers' equation, which is a conservative system that can form shock waves.
Here we consider the following viscous Burgers' equation: 
\begin{align}
    &u_t = -uu_x + \nu u_{xx}\;,\;\; 0<x<1, 0<t<0.1\label{eq_visBurgers}\\
    &u(x,0) = \sin^2(2\pi x)+\cos^3(3\pi x)\;,\;\;0\leq x\leq 1\;,\;\; u(0,t)=u(1,t)\;,\;\;0\leq t\leq 0.1.\nonumber
\end{align}

\noindent\textbf{Korteweg–de Vries equation} is well known for its soliton solutions that demonstrate the phenomenon of superposition of nonlinear waves~\cite{sawada1974method}, and for modeling fluid dynamics of shallow water surfaces in long and narrow channels~\cite{boussinesq1877essai}.
Its dimensionless form is given as 
\begin{align}\label{KdV}
    &u_t +u_{xxx}+6uu_{x}=0\;.
\end{align}
In this Section, we consider the form of \eqref{KdV}, whose initial solution is as follows:
\begin{align*}
    &u(x,0) = 3.5\sin^3(4\pi x)+1.5\exp\big(-\sin(2\pi x)(1-x)\big) \;,\;\; \\
    &0\leq x\leq 1\;,\;\; u(0,t)=u(1,t)\;,\;\;0\leq t\leq 0.1.\nonumber
\end{align*}
\noindent\textbf{Data Generation}
For $N$-size sampling in the temporal dimension, by Theorem~\ref{thm1}, we take $M=\lfloor N^{(2\times P_{\max}+5)/7} \rfloor$ sample size in the space dimension.  
We numerically solve Viscous Burgers' equation~\eqref{eq_visBurgers} by the Lax-Wendroff scheme on a grid with interval width $\delta t = 0.1/(100N)$ in temporal and  $\delta x =1/M $ in space, then we downsampled the data in the temporal dimension by a factor of $100$; thus the resulted clean data is distributed over a grid with $N$ nodes in temporal and $M$ nodes in space. 
Lastly, we added i.i.d. Gaussian noise with standard deviation $\sigma=0.25$ to the data. i.e., $\nu_{i}^{n}\overset{\text{i.i.d.}}{\sim}\mathcal{N}(0,0.25^2)$.
As for solving the KdV equation~\eqref{KdV}, the same approaches with Viscous Burger's equation are applied, with i.i.d. Gaussian noises with standard deviation $\sigma=0.025$.

\subsubsection{Constructions of Regression Problems} 
We employ the Local-Polynomial smoothing for estimating $\widehat{\bu}_{t}$ and $\widehat{\bF}$ as described in Subsection~\ref{local}.
Regarding a choice of kernel for constructing $\widehat{\bu}_{t}$ and $\widehat{\bF}$, we use the Epanechnikov kernel defined by:
\begin{align*}
	\mathcal{K}(z) = \frac{3}{4}(1-z^2)_+\;,~z\in\mathbb{R}\;,
\end{align*}
where $(\cdot)_+:=\max( 0,\cdot )$.
Bandwidth parameters $h_{N}$ and $w_{M}$ in \eqref{eq_K_b} and \eqref{eq_K_c} are chosen in the order of $h_N=\Theta(N^{-\frac{1}{7}})$ and $w_{M} = \Theta(M^{-\frac{1}{7}})$, respectively.
As displayed in Table~\ref{table1}, for the experiments presented in this Section, we choose specific constant factors in the order expressions of $h_N$ and $w_M$ for Viscous Burgers equation and KdV equation.
Regarding more detailed issues on the choices of these constants, readers can refer to Section~\ref{sec_conclusion}.
It is also worth noting that we do not use \eqref{LP_t} and \eqref{LP_x} as solutions of the optimization problems \eqref{eq_K_b} and \eqref{eq_K_c} for the experiments, since the expressions in \eqref{LP_t} and \eqref{LP_x} are derived in asymptotic settings.
For the reader's convenience, We provide the closed form solutions of \eqref{LP_t} and \eqref{LP_x} in Appendix~\ref{app.loc}.

For Viscous Burgers' equation, on the set of noisy data, Local-Polynomial fitting with $P_{\max}=2$ is applied to construct $\widehat{\bu}_{t}$ and $\widehat{\bF}$.
Specifically, our goal is to identify the fifth and the sixth coefficients, $\bbeta_{5}$ and $\bbeta_{6}$, of a following linear measurement via our proposed $\ell_1$-PsLS model~\eqref{eq.lassomat}:
\begin{align*}
    \widehat{\bu}_{t}=\bbeta_{0}+\bbeta_{1}\widehat{\bu}+\bbeta_{2}\widehat{\bu}^{2}+\bbeta_{3}\widehat{\bu}_{x}+\bbeta_{4}\widehat{\bu}_{x}^{2}+\bbeta_{5}\widehat{\bu}\widehat{\bu}_{x}+\bbeta_{6}\widehat{\bu}_{xx}+\bbeta_{7}\widehat{\bu}_{xx}^{2}+\bbeta_{8}\widehat{\bu}_{x}\widehat{\bu}_{xx}+\bbeta_{9}\widehat{\bu}\widehat{\bu}_{xx}.
\end{align*}

\begin{table}[t]
  \centering
  \begin{tabular}{l|l|l}
     & $w_{M}$  &  $h_{N}$ \\ \hline
     Viscous Burgers & $0.75 M^{-\frac{1}{7}}$ & $0.25 N^{-\frac{1}{7}}$ \\ \hline
     KdV & $0.1 M^{-\frac{1}{7}}$  & $0.01 N^{-\frac{1}{7}}$ 
  \end{tabular}
  \caption{ Specific choices of the constants in the order of $h_N=\Theta(N^{-\frac{1}{7}})$ and $w_{M} = \Theta(M^{-\frac{1}{7}})$ for the experiments on Viscous Burgers equation and KdV equation are presented.}
  \label{table1}
\end{table}

For KdV equation, after generating the data-points, $\widehat{\bu}_{t}$ and $\widehat{\bF}$ are fitted through Local-Polynomial with $P_{max}=3$.
We want $\ell_{1}$-PsLS to select $\bbeta_{5}$ and $\bbeta_{10}$ as non-zero coefficients in a following linear measurement:
\begin{align*}
    \widehat{\bu}_{t}=\bbeta_{0}+\bbeta_{1}\widehat{\bu}+\bbeta_{2}\widehat{\bu}^{2}+\bbeta_{3}\widehat{\bu}_{x}+&\bbeta_{4}\widehat{\bu}_{x}^{2}+\bbeta_{5}\widehat{\bu}\widehat{\bu}_{x}+\bbeta_{6}\widehat{\bu}_{xx}+\bbeta_{7}\widehat{\bu}_{xx}^{2}+\bbeta_{8}\widehat{\bu}_{x}\widehat{\bu}_{xx}+\bbeta_{9}\widehat{\bu}\widehat{\bu}_{xx}\\
    &+\bbeta_{10}\widehat{\bu}_{xxx}+\bbeta_{11}\widehat{\bu}_{xxx}^{2}+\bbeta_{12}\widehat{\bu}_{x}\widehat{\bu}_{xxx}+\bbeta_{13}\widehat{\bu}_{xx}\widehat{\bu}_{xxx}+\bbeta_{14}\widehat{\bu}\widehat{\bu}_{xxx}.
\end{align*}

\subsection{Numerical Verifications of Main Statements}
In this subsection, we design an experiment to numerically verify following two main statements of this paper.
\begin{enumerate}
    \item \textit{Under the assumptions (\ref{assum1}) and (\ref{assum2}), and with large enough data points, there exist some $\lambda\geq 0$ such that $\ell_1$-PsLS model~\eqref{eq.lassomat}  recovers a signed-support $\big( \mathbb{S}_{\pm}(\widehat{\bbeta})=\mathbb{S}_{\pm}(\bbeta^{*}) \big)$ of an unique PDE that admits the underlying function as a solution in probability.}
    \item \textit{Given the assumptions (\ref{assum2}) for some $\mu\in(0,1]$, sampled incoherence parameter $\mu'$ converges to ground-truth incoherence parameter $\mu$ in probability with large enough data points.}
\end{enumerate}

The experiment is conducted over two PDE models, \textbf{Viscous Burgers' equation} and \textbf{KdV equation} introduced in Subsection \ref{sec_expsetting}.
We generate the data by setting $\nu=0.03$ in \eqref{eq_visBurgers}.
In Figure \ref{fig_rate_zsc}, the probability of signed-support recovery $\mathbb{P}[\mathbb{S}_{\pm}(\widehat{\bbeta})=\mathbb{S}_{\pm}(\bbeta^{*})]$ versus the grid size of temporal dimension $N$, and $\left\|\widehat{z}_{\mathcal{S}^{c}}\right\|_{\infty}$ versus $N$ are recorded on the same plot for respective models.
Each point on each curve, which represents $\mathbb{P}[\mathbb{S}_{\pm}(\widehat{\bbeta})=\mathbb{S}_{\pm}(\bbeta^{*})]$, in (a) and (b) corresponds to the average over $100$ trials.
For each iteration, the hyper-parameter $\lambda_{N}$ is chosen in an ``optimal'' way: we used the value yielding the correct number of nonzero coefficient.
With the chosen $\lambda_{N}$, $\widehat{z}_{\mathcal{S}^{c}}$ is calculated as given in \eqref{zSc'}.
Note that \eqref{zSc'} can be calculated only when the $\ell_{1}$-PsLS finds $\lambda_{N}$ that gives the minimizer of \eqref{eq.lassomat} $\widehat{\bbeta}^\lambda$ such that $\widehat{\bbeta}_{\mathcal{S}^{c}}^\lambda=0$ and $\mS(\widehat{\bbeta}^\lambda)\subseteq\mS(\bbeta^*)$.
For this reason, boxplots of $\left\|\widehat{z}_{\mathcal{S}^{c}}\right\|_{\infty}$ in (a) and (b) are drawn from the point when $\ell_{1}$-PsLS starts to find such $\lambda_{N}$.
For both models, $\mathbb{P}[\mathbb{S}_{\pm}(\widehat{\bbeta})=\mathbb{S}_{\pm}(\bbeta^{*})]$ goes to $1$, as we observe more data points on finer grid.
Furthermore, it is worth noting that the \textit{strict dual feasibility} condition (i.e., $\left\|\widehat{z}_{\mathcal{S}^{c}}\right\|_{\infty}<1$) holds for both cases.
\begin{figure}
  \begin{tabular}{cc}
      (a){ Viscous Burgers} & (b){ KdV} \\
      \includegraphics[width=0.5\textwidth]{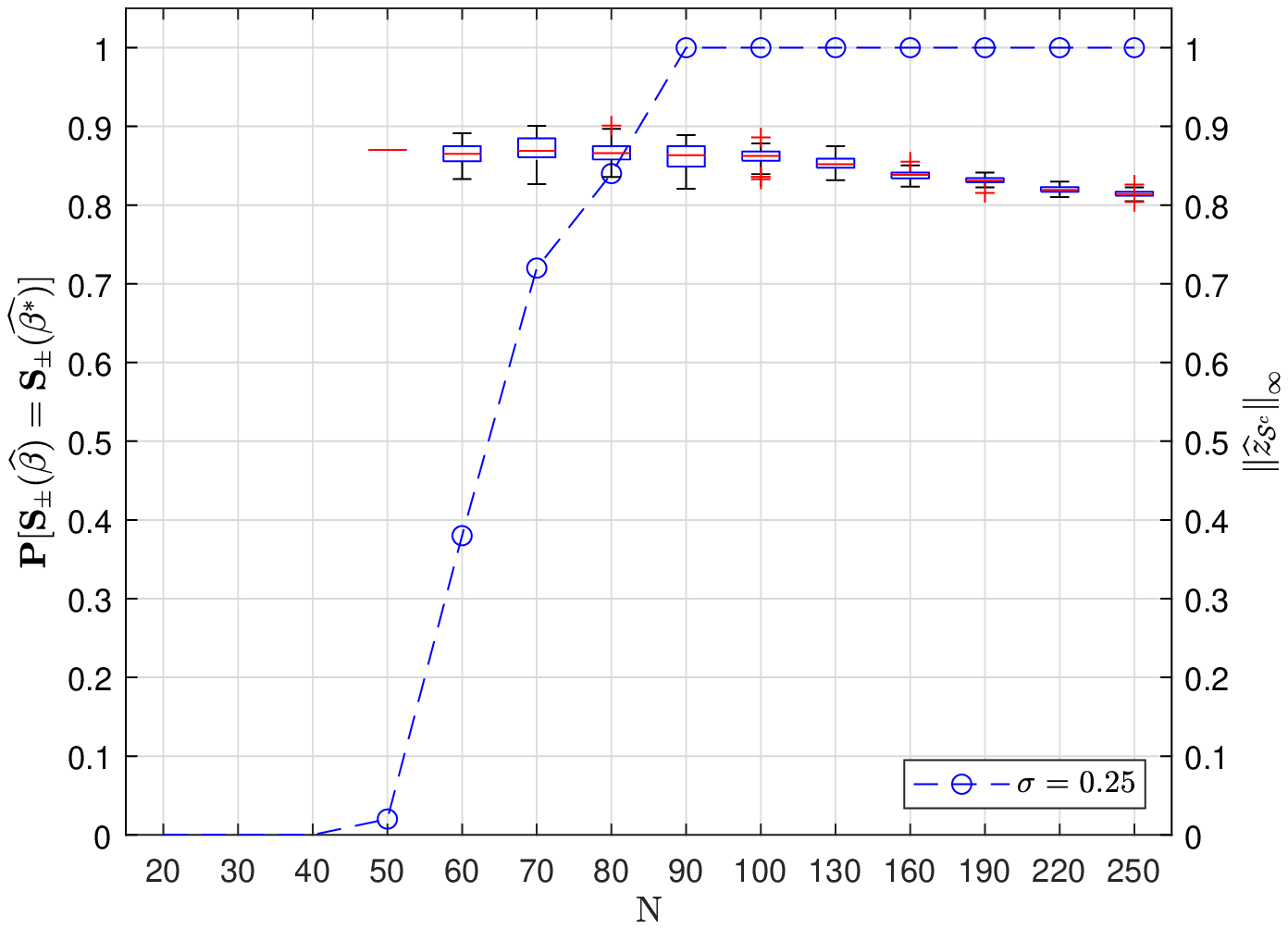}&
      \includegraphics[width=0.5\textwidth]{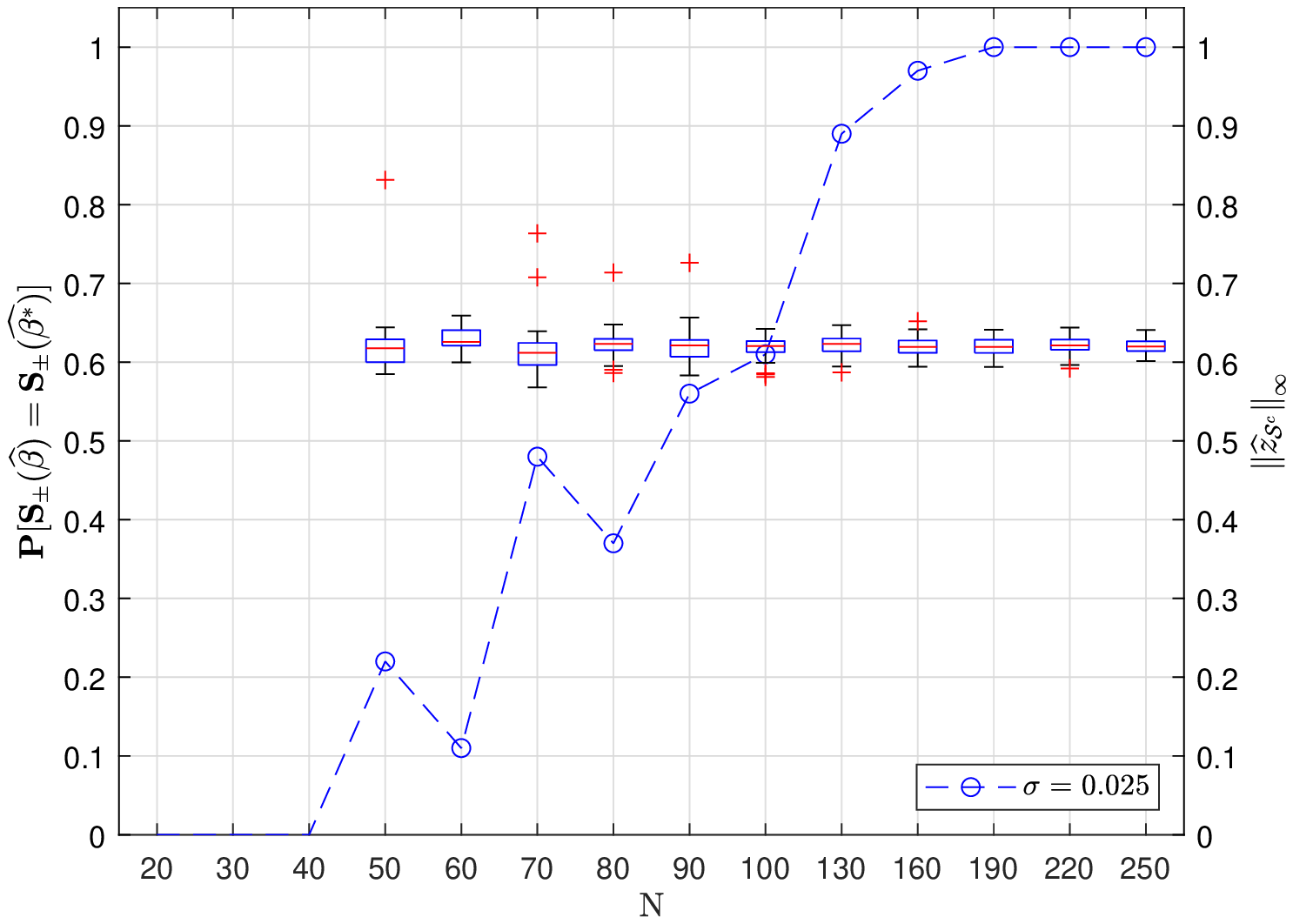}
  \end{tabular}
  \caption{Probability of signed-support recovery $\mathbb{P}[\mathbb{S}_{\pm}(\widehat{\beta})=\mathbb{S}_{\pm}(\beta^{*})]$ versus the grid size of temporal dimension $N$, and $\left\|\widehat{z}_{\mathcal{S}^{c}}\right\|_{\infty}$ versus $N$ are recorded on the same plot for Viscous Burger's equation in panel (a) and for KdV equation in panel (b), respectively.} 
  \label{fig_rate_zsc}
\end{figure}
In Figure \ref{fig_Inco}, boxplots of $\VERT \widehat{\mathcal{Q}}_{N}\VERT_{\infty}$ versus $N$ are displayed for Viscous Burgers' equation and KdV equation respectively.
A dotted horizontal line in each panel represents $1-\mu$ calculated from the ground-truth feature matrix $\bF$.
Notice that as the number of observed data gets larger, the sampled incoherence parameter goes below the dotted lines for both models. 

\begin{figure}
  \begin{tabular}{cc}
      (a){ Viscous Burgers} & (b){ KdV} \\
      \includegraphics[width=0.5\textwidth]{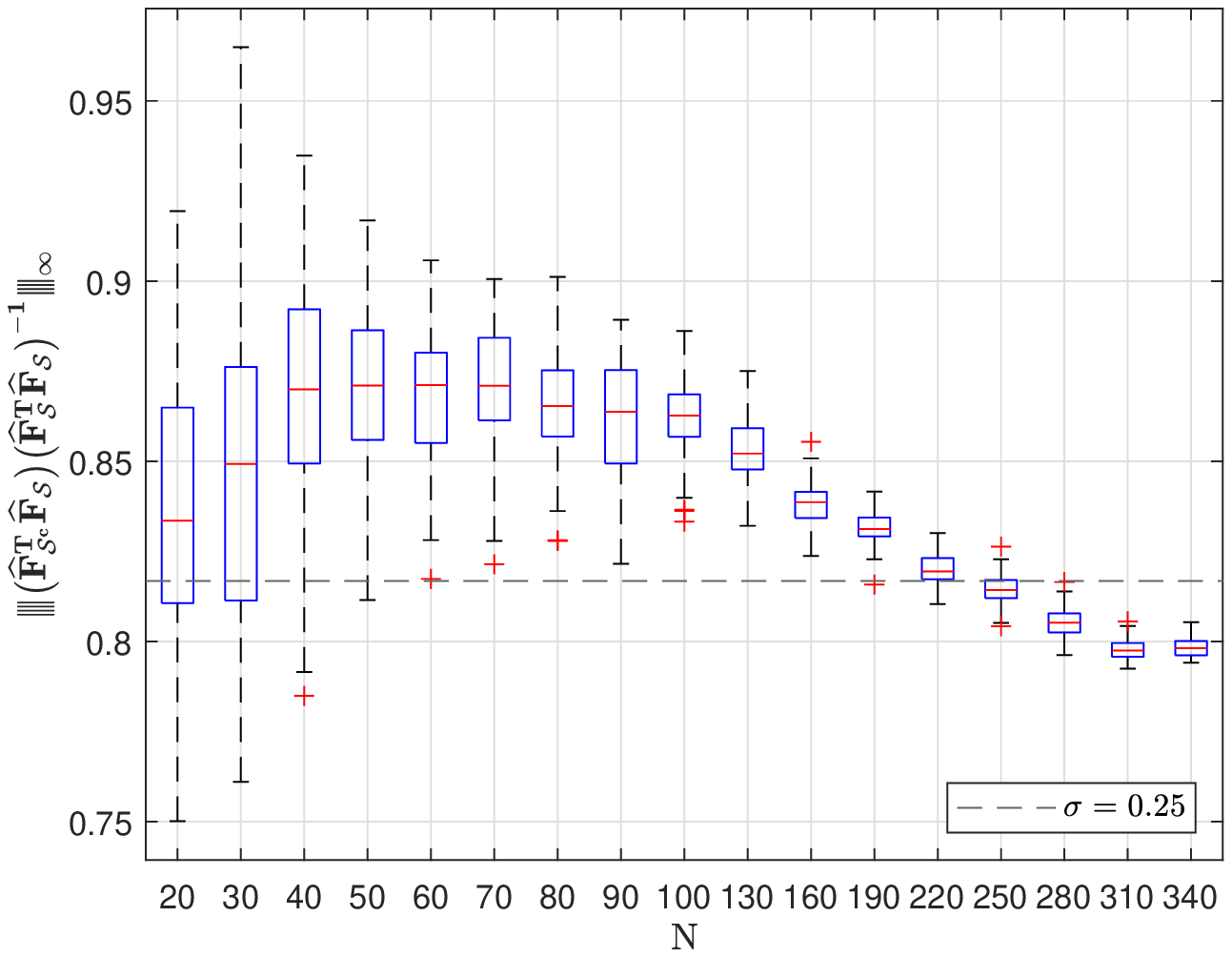}&
      \includegraphics[width=0.5\textwidth]{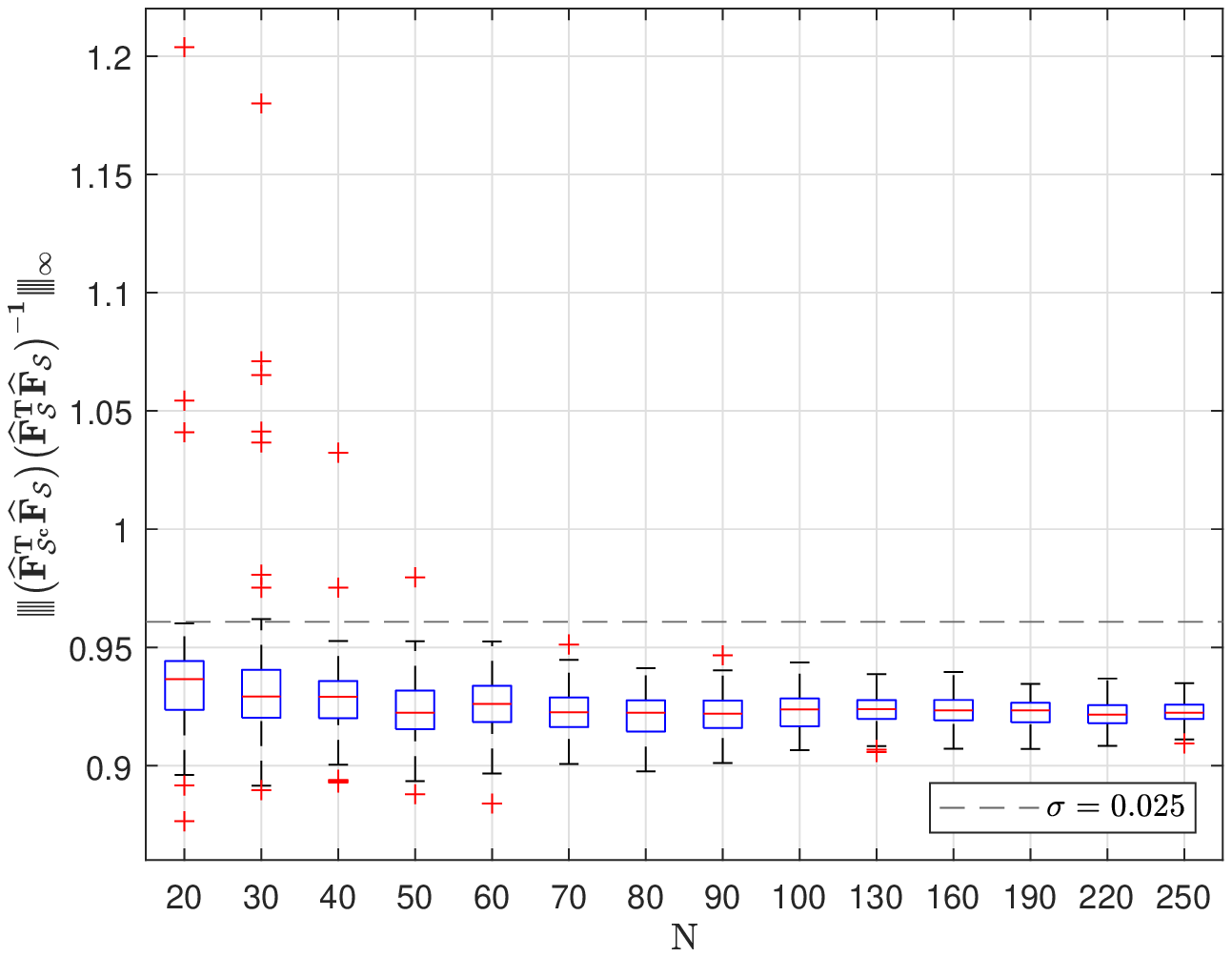}
  \end{tabular}
  \caption{Boxplots of $\VERT \widehat{\mathcal{Q}}_{N}\VERT_{\infty}$ versus $N$ are displayed for Viscous Burgers' equation in panel (a) and KdV equation in panel (b), respectively.}
  \label{fig_Inco}
\end{figure}

\subsection{Impact of $\bbeta^*_{\text{min}}$ in Signed-Support Recovery of $\ell_{1}$-PsLS} \label{ssec7.3}
Theorem \ref{thm1} states that as long as $\bbeta^*_{\min}:=\min_{i\in\mathcal{S}}|\bbeta^*_{i}|$ is beyond certain threshold, $\ell_{1}$-PsLS is signed-support recovery consistent.
In this subsection, we design an experiment to numerically confirm this claim. 
The experiment is performed over Viscous Burgers' equation by varying the coefficient $\nu$ in \eqref{eq_visBurgers} : we set $\nu=0.03,0.02,0.01,0.005$.
The Figure \ref{VB_beta_min} (a) displays the curves representing $\mathbb{P}[\mathbb{S}_{\pm}(\widehat{\bbeta})=\mathbb{S}_{\pm}(\bbeta^{*})]$ versus $N$ for each of the four cases.
Each point on each curve represents the average over $100$ trials. 
The Figure \ref{VB_beta_min} (b) exhibits the range of $\lambda_{N}$ for which $\ell_{1}$-PsLS finds the support of $\widehat{\bbeta}^{\lambda}$ that is contained within the true support, when $\nu$ is set as $0.005$.
More specifically, boxplots in (b) record the range of $\lambda_{N}$ that picks $\widehat{\bu}_{xx}$ as the selected argument.
In (a), we can check that, as the magnitude of $\min_{i\in\mathcal{S}}|\bbeta^*_{i}|$ decreases from $0.03$ to $0.01$, $\ell_{1}$-PsLS requires more data-points for the signed-support recovery,
and when $\min_{i\in\mathcal{S}}|\bbeta^*_{i}|$ drops to $0.005$, $\ell_{1}$-PsLS fails to recover the governing PDE.
On the other hand, (b) says that there exists a range of $\lambda_{N}$ for which $\ell_{1}$-PsLS can still recover a subset of $\bbeta^*$, while the perfect signed-support recovery is difficult.

\begin{figure} 
  \begin{tabular}{cc}
      (a) & (b) \\
      \includegraphics[width=0.5\textwidth]{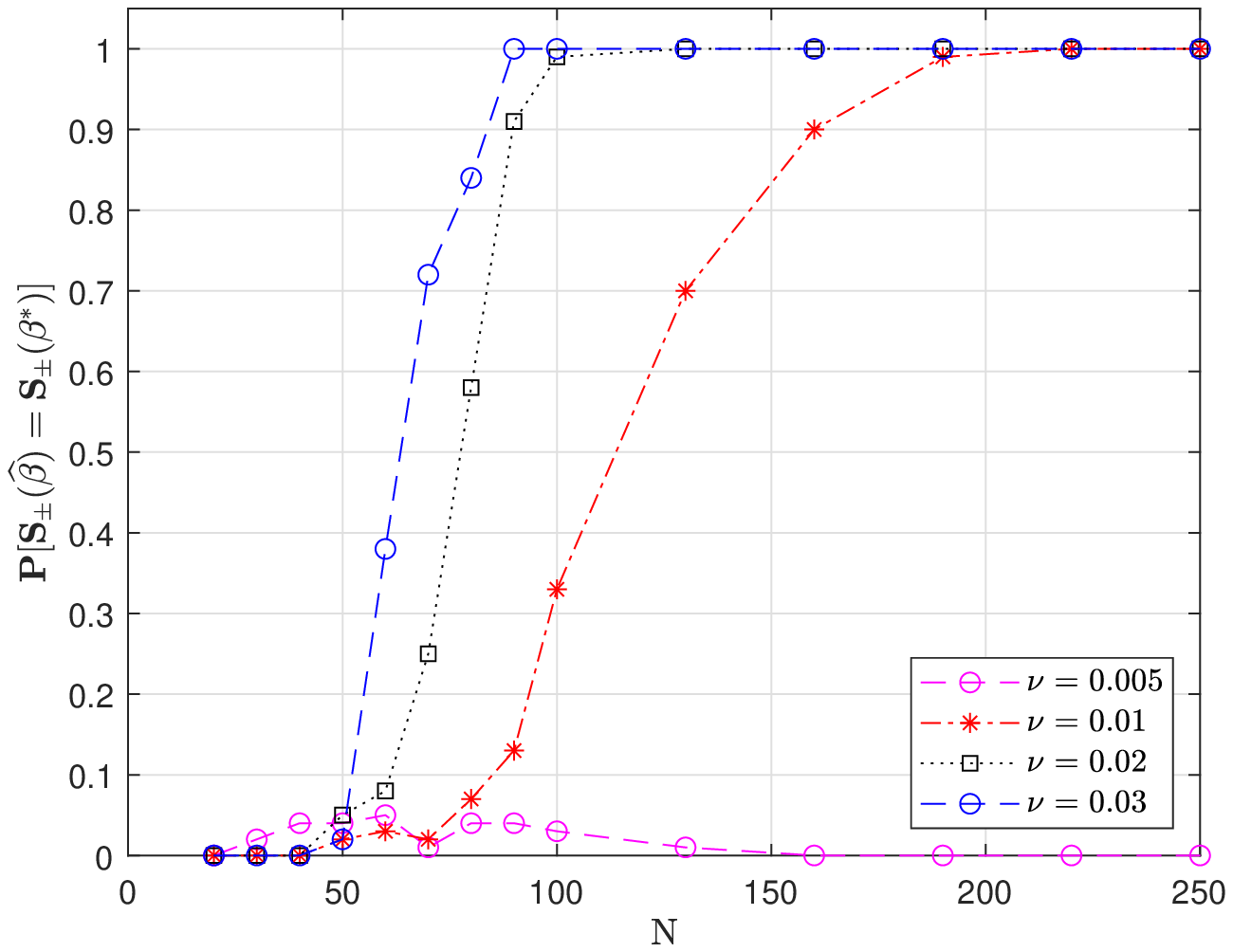}&
      \includegraphics[width=0.5\textwidth]{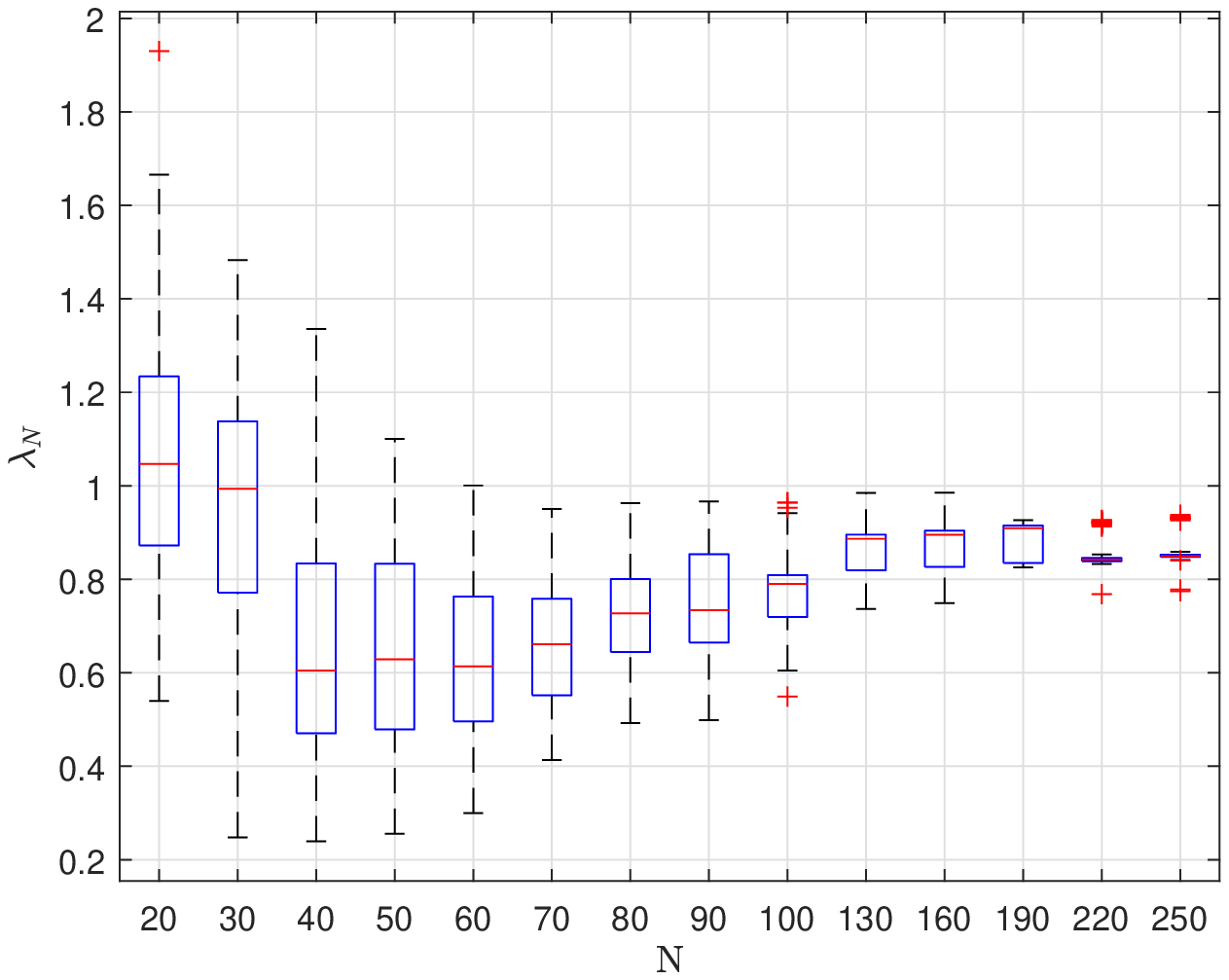}
  \end{tabular}
  \caption{Left panel (a) displays the curves representing 
   $\mathbb{P}[\mathbb{S}_{\pm}(\widehat{\bbeta})=\mathbb{S}_{\pm}(\bbeta^{*})]$ versus $N$, when $\nu=0.03,0.02,0.01,0.005$.
  Right panel (b) exhibits the range of $\lambda_{N}$ for which $\ell_{1}$-PsLS gives the solution $\widehat{\bbeta}^{\lambda}$ such that
  $\mathcal{S}(\widehat{\bbeta}^{\lambda})\subseteq\mathcal{S}(\bbeta^*)$ with respect to $N$, when $\nu$ is set as $0.005$. } 
  \label{VB_beta_min}
\end{figure}

\section{Conclusion}\label{sec_conclusion}
We provide a formal theoretical analysis on the PDE identification via $\ell_{1}$-regularized Pseudo Least Square method from the statistical point of view.
In this article, we assume that the differential equation governing the dynamic system can be represented as a linear combination of various linear and nonlinear differential terms.  
We employ the Local-Polynomial fitting and apply the $\ell_{1}$ penalty for model selection.  
A signed-support recovery of $\ell_{1}$-PsLS method with an exponential convergence rate is obtained under the classical mutual incoherence condition on the feature matrix $\bF$.
We divide the cases into two for the proof of the Theorem~\ref{thm1}.
Firstly, a signed-support recovery of $\ell_{1}$-PsLS method is shown with mutual incoherence assumption being imposed on the estimated feature matrix $\widehat{\bF}$. 
Then, we show $\widehat{\mathcal{Q}}_{N}$ gets close to $\mathcal{Q}^*$ under $\left\VERT\cdot\right\VERT_{\infty}$ ensuring the statement of the Theorem~\ref{thm1}.
We run numerical experiments on two popular PDE models, and the results from the experiments corroborate our theoretical predictions.
We present two directions to be explored based on the $\ell_{1}$-PsLS method proposed in our work.

\begin{enumerate}
    \item Recall that our theory utilizes the equivalent kernel theory for Local-Polynomial regression \cite{fan1997local}, stating that the higher-order Local-Polynomial smoothing is asymptotically equivalent to higher-order kernel smoothing.
    Due to this construction, our theory cannot characterize the convergence behavior of signed-support recovery of $\ell_{1}$-PsLS, when the number of observations is small.
    We conjecture that the uniform convergence rate of the Local-Polynomial estimator with exponential decay can be obtained in a non-asymptotic sense, by using a similar technique employed in ~\cite{audibert2007fast}.
    They impose an assumption that the regression function belongs to the H\"older class. They manipulate the closed-form solution of the Local-Polynomial estimator so that the difference of the estimator and the regression function has a special form that can be controlled by the Bernstein's inequality.
    It would be an interesting research direction to see whether this technique can be employed in our setting.
    
    \item The choice of the bandwidth parameter is essential in Local-Polynomial fitting, 
    thereby having a significant impact on support recovery of PDE problem via $\ell_{1}$-PsLS.
    It is worth noting that~\cite{liang2008parameter} employed the substitution method in~\cite{ruppert1995effective} based on the  asymptotic Mean Integrated Squared Error for the specific choices of the constant factors of the bandwidth parameter. 
    However, the method is only limited to the local-quadratic estimator and is not applicable to our setting, which requires a higher-order smoothing estimator.
    In our numerical experiments, we choose the constant factors of bandwidth parameters $h_{N}$ and $w_{M}$ manually.
    It only provides an ad-hoc guidance of bandwidth selection.
    Developing a data-driven bandwidth selection procedure for $\ell_{1}$-PsLS is a worthy topic for future research.
\end{enumerate}

\bibliographystyle{unsrt}
\bibliography{main}

\begin{thebibliography}{10}

\bibitem{schrodinger1926undulatory}
Erwin Schr{\"o}dinger.
\newblock An undulatory theory of the mechanics of atoms and molecules.
\newblock {\em Physical review}, 28(6):1049, 1926.

\bibitem{black1973pricing}
Fischer Black and Myron Scholes.
\newblock The pricing of options and corporate liabilities.
\newblock {\em Journal of political economy}, 81(3):637--654, 1973.

\bibitem{haskovec2018ode}
Jan Haskovec, Lisa~Maria Kreusser, and Peter Markowich.
\newblock {ODE} and {PDE} based modeling of biological transportation networks.
\newblock {\em arXiv preprint arXiv:1805.08526}, 2018.

\bibitem{achdou2014partial}
Yves Achdou, Francisco~J Buera, Jean-Michel Lasry, Pierre-Louis Lions, and
  Benjamin Moll.
\newblock Partial differential equation models in macroeconomics.
\newblock {\em Philosophical Transactions of the Royal Society A: Mathematical,
  Physical and Engineering Sciences}, 372(2028):20130397, 2014.

\bibitem{musha1978traffic}
Toshimitsu Musha and Hideyo Higuchi.
\newblock Traffic current fluctuation and the {Burgers} equation.
\newblock {\em Japanese journal of applied physics}, 17(5):811, 1978.

\bibitem{tikhomirov1991study}
VM~Tikhomirov.
\newblock A study of the diffusion equation with increase in the amount of
  substance, and its application to a biological problem.
\newblock In {\em Selected works of AN Kolmogorov}, pages 242--270. Springer,
  1991.

\bibitem{newell1985solitons}
Alan~C Newell.
\newblock {\em Solitons in mathematics and physics}, volume~48.
\newblock Siam, 1985.

\bibitem{fan1997local}
Jianqing Fan, Theo Gasser, Ir{\`e}ne Gijbels, Michael Brockmann, and Joachim
  Engel.
\newblock Local polynomial regression: optimal kernels and asymptotic minimax
  efficiency.
\newblock {\em Annals of the Institute of Statistical Mathematics},
  49(1):79--99, 1997.

\bibitem{fan2018local}
Jianqing Fan.
\newblock {\em Local polynomial modelling and its applications: monographs on
  statistics and applied probability 66}.
\newblock Routledge, 2018.

\bibitem{liang2008parameter}
Hua Liang and Hulin Wu.
\newblock Parameter estimation for differential equation models using a
  framework of measurement error in regression models.
\newblock {\em Journal of the American Statistical Association},
  103(484):1570--1583, 2008.

\bibitem{chen2008efficient}
Jianwei Chen and Hulin Wu.
\newblock Efficient local estimation for time-varying coefficients in
  deterministic dynamic models with applications to {HIV}-1 dynamics.
\newblock {\em Journal of the American Statistical Association},
  103(481):369--384, 2008.

\bibitem{chen2008estimation}
Jianwei Chen and Hulin Wu.
\newblock Estimation of time-varying parameters in deterministic dynamic
  models.
\newblock {\em Statistica Sinica}, 18(3):987--1006, 2008.

\bibitem{bar1999fitting}
Markus B{\"a}r, Rainer Hegger, and Holger Kantz.
\newblock Fitting partial differential equations to space-time dynamics.
\newblock {\em Physical Review E}, 59(1):337, 1999.

\bibitem{tibshirani1996regression}
Robert Tibshirani.
\newblock Regression shrinkage and selection via the {Lasso}.
\newblock {\em Journal of the Royal Statistical Society: Series B
  (Methodological)}, 58(1):267--288, 1996.

\bibitem{wainwright2009sharp}
Martin~J Wainwright.
\newblock Sharp thresholds for high-dimensional and noisy sparsity recovery
  using {$\ell_1$}-constrained quadratic programming ({L}asso).
\newblock {\em IEEE transactions on information theory}, 55(5):2183--2202,
  2009.

\bibitem{jia2013lasso}
Jinzhu Jia, Karl Rohe, and Bin Yu.
\newblock The {Lasso} under poisson-like heteroscedasticity.
\newblock {\em Statistica Sinica}, pages 99--118, 2013.

\bibitem{buhlmann2011statistics}
Peter B{\"u}hlmann and Sara Van De~Geer.
\newblock {\em Statistics for high-dimensional data: methods, theory and
  applications}.
\newblock Springer Science \& Business Media, 2011.

\bibitem{ravikumar2010high}
Pradeep Ravikumar, Martin~J Wainwright, and John~D Lafferty.
\newblock High-dimensional {Ising} model selection using {$\ell_1$}-regularized
  logistic regression.
\newblock {\em The Annals of Statistics}, 38(3):1287--1319, 2010.

\bibitem{ravikumar2009sparse}
Pradeep Ravikumar, John Lafferty, Han Liu, and Larry Wasserman.
\newblock Sparse additive models.
\newblock {\em Journal of the Royal Statistical Society: Series B (Statistical
  Methodology)}, 71(5):1009--1030, 2009.

\bibitem{ravikumar2011high}
Pradeep Ravikumar, Martin~J Wainwright, Garvesh Raskutti, and Bin Yu.
\newblock High-dimensional covariance estimation by minimizing
  {$\ell_1$}-penalized log-determinant divergence.
\newblock {\em Electronic Journal of Statistics}, 5:935--980, 2011.

\bibitem{obozinski2008union}
Guillaume Obozinski, Martin~J Wainwright, and Michael~I Jordan.
\newblock Union support recovery in high-dimensional multivariate regression.
\newblock In {\em 2008 46th Annual Allerton Conference on Communication,
  Control, and Computing}, pages 21--26. IEEE, 2008.

\bibitem{wang2013block}
Weiguang Wang, Yingbin Liang, and Eric Xing.
\newblock Block regularized {L}asso for multivariate multi-response linear
  regression.
\newblock In {\em Artificial Intelligence and Statistics}, pages 608--617,
  2013.

\bibitem{jalali2010dirty}
Ali Jalali, Sujay Sanghavi, Chao Ruan, and Pradeep~K Ravikumar.
\newblock A dirty model for multi-task learning.
\newblock In {\em Advances in neural information processing systems}, pages
  964--972, 2010.

\bibitem{brunton2016discovering}
Steven~L Brunton, Joshua~L Proctor, and J~Nathan Kutz.
\newblock Discovering governing equations from data by sparse identification of
  nonlinear dynamical systems.
\newblock {\em Proceedings of the national academy of sciences},
  113(15):3932--3937, 2016.

\bibitem{cortiella2021sparse}
Alexandre Cortiella, Kwang-Chun Park, and Alireza Doostan.
\newblock Sparse identification of nonlinear dynamical systems via reweighted
  $\ell_{1}$-regularized least squares.
\newblock {\em Computer Methods in Applied Mechanics and Engineering},
  376:113620, 2021.

\bibitem{kang2019ident}
Sung~Ha Kang, Wenjing Liao, and Yingjie Liu.
\newblock Ident: Identifying differential equations with numerical time
  evolution.
\newblock {\em arXiv preprint arXiv:1904.03538}, 2019.

\bibitem{schaeffer2017learning}
Hayden Schaeffer.
\newblock Learning partial differential equations via data discovery and sparse
  optimization.
\newblock {\em Proceedings of the Royal Society A: Mathematical, Physical and
  Engineering Sciences}, 473(2197):20160446, 2017.

\bibitem{rudy2017data}
Samuel~H Rudy, Steven~L Brunton, Joshua~L Proctor, and J~Nathan Kutz.
\newblock Data-driven discovery of partial differential equations.
\newblock {\em Science Advances}, 3(4):e1602614, 2017.

\bibitem{schaeffer2018extracting}
Hayden Schaeffer, Giang Tran, and Rachel Ward.
\newblock Extracting sparse high-dimensional dynamics from limited data.
\newblock {\em SIAM Journal on Applied Mathematics}, 78(6):3279--3295, 2018.

\bibitem{chen2001atomic}
Scott~Shaobing Chen, David~L Donoho, and Michael~A Saunders.
\newblock Atomic decomposition by basis pursuit.
\newblock {\em SIAM review}, 43(1):129--159, 2001.

\bibitem{donoho2001uncertainty}
David~L Donoho and Xiaoming Huo.
\newblock Uncertainty principles and ideal atomic decomposition.
\newblock {\em IEEE transactions on information theory}, 47(7):2845--2862,
  2001.

\bibitem{donoho2005stable}
David~L Donoho, Michael Elad, and Vladimir~N Temlyakov.
\newblock Stable recovery of sparse overcomplete representations in the
  presence of noise.
\newblock {\em IEEE Transactions on information theory}, 52(1):6--18, 2005.

\bibitem{feuer2003sparse}
Arie Feuer and Arkadi Nemirovski.
\newblock On sparse representation in pairs of bases.
\newblock {\em IEEE Transactions on Information Theory}, 49(6):1579--1581,
  2003.

\bibitem{candes2005decoding}
Emmanuel~J Candes and Terence Tao.
\newblock Decoding by linear programming.
\newblock {\em IEEE transactions on information theory}, 51(12):4203--4215,
  2005.

\bibitem{candes2006robust}
Emmanuel~J Cand{\`e}s, Justin Romberg, and Terence Tao.
\newblock Robust uncertainty principles: Exact signal reconstruction from
  highly incomplete frequency information.
\newblock {\em IEEE Transactions on information theory}, 52(2):489--509, 2006.

\bibitem{knight2000asymptotics}
Keith Knight and Wenjiang Fu.
\newblock Asymptotics for {Lasso}-type estimators.
\newblock {\em The Annals of statistics}, pages 1356--1378, 2000.

\bibitem{tropp2006just}
Joel~A Tropp.
\newblock Just relax: Convex programming methods for identifying sparse signals
  in noise.
\newblock {\em IEEE transactions on information theory}, 52(3):1030--1051,
  2006.

\bibitem{zhao2006model}
Peng Zhao and Bin Yu.
\newblock On model selection consistency of {Lasso}.
\newblock {\em Journal of Machine learning research}, 7(Nov):2541--2563, 2006.

\bibitem{fuchs2005recovery}
Jean-Jacques Fuchs.
\newblock Recovery of exact sparse representations in the presence of bounded
  noise.
\newblock {\em IEEE Transactions on Information Theory}, 51(10):3601--3608,
  2005.

\bibitem{meinshausen2006high}
Nicolai Meinshausen and Peter B{\"u}hlmann.
\newblock High-dimensional graphs and variable selection with the {Lasso}.
\newblock {\em The Annals of statistics}, 34(3):1436--1462, 2006.

\bibitem{ravikumar2008model}
Pradeep Ravikumar, Garvesh Raskutti, Martin~J Wainwright, and Bin Yu.
\newblock Model selection in gaussian graphical models: High-dimensional
  consistency of l1-regularized mle.
\newblock In {\em NIPS}, pages 1329--1336, 2008.

\bibitem{fan2010selective}
Jianqing Fan and Jinchi Lv.
\newblock A selective overview of variable selection in high dimensional
  feature space.
\newblock {\em Statistica Sinica}, 20(1):101, 2010.

\bibitem{mack1982weak}
Yue-pok Mack and Bernard~W Silverman.
\newblock Weak and strong uniform consistency of kernel regression estimates.
\newblock {\em Zeitschrift f{\"u}r Wahrscheinlichkeitstheorie und verwandte
  Gebiete}, 61(3):405--415, 1982.

\bibitem{tusnady1977remark}
G~Tusn{\'a}dy.
\newblock A remark on the approximation of the sample df in the
  multidimensional case.
\newblock {\em Periodica Mathematica Hungarica}, 8(1):53--55, 1977.

\bibitem{masry1996multivariate}
Elias Masry.
\newblock Multivariate local polynomial regression for time series: uniform
  strong consistency and rates.
\newblock {\em Journal of Time Series Analysis}, 17(6):571--599, 1996.

\bibitem{li2010uniform}
Yehua Li and Tailen Hsing.
\newblock Uniform convergence rates for nonparametric regression and principal
  component analysis in functional/longitudinal data.
\newblock {\em The Annals of Statistics}, 38(6):3321--3351, 2010.

\bibitem{silverman1978weak}
Bernard~W Silverman.
\newblock Weak and strong uniform consistency of the kernel estimate of a
  density and its derivatives.
\newblock {\em The Annals of Statistics}, pages 177--184, 1978.

\bibitem{tao2012topics}
Terence Tao.
\newblock {\em Topics in random matrix theory}, volume 132.
\newblock American Mathematical Soc., 2012.

\bibitem{davidson2001local}
Kenneth~R Davidson and Stanislaw~J Szarek.
\newblock Local operator theory, random matrices and banach spaces.
\newblock {\em Handbook of the geometry of Banach spaces}, 1(317-366):131,
  2001.

\bibitem{bonkile2018systematic}
Mayur~P Bonkile, Ashish Awasthi, C~Lakshmi, Vijitha Mukundan, and VS~Aswin.
\newblock A systematic literature review of {Burgers’} equation with recent
  advances.
\newblock {\em Pramana}, 90(6):69, 2018.

\bibitem{rudenko1975theoretical}
OV~Rudenko and SI~Soluian.
\newblock The theoretical principles of nonlinear acoustics.
\newblock {\em MoIzN}, 1975.

\bibitem{sawada1974method}
Katuro Sawada and Takeyasu Kotera.
\newblock A method for finding n-soliton solutions of the {KdV} equation and
  {KdV}-like equation.
\newblock {\em Progress of Theoretical Physics}, 51(5):1355--1367, 1974.

\bibitem{boussinesq1877essai}
Joseph Boussinesq.
\newblock {\em Essai sur la th{\'e}orie des eaux courantes}.
\newblock Impr. nationale, 1877.

\bibitem{audibert2007fast}
Jean-Yves Audibert and Alexandre~B Tsybakov.
\newblock Fast learning rates for plug-in classifiers.
\newblock {\em The Annals of statistics}, 35(2):608--633, 2007.

\bibitem{ruppert1995effective}
David Ruppert, Simon~J Sheather, and Matthew~P Wand.
\newblock An effective bandwidth selector for local least squares regression.
\newblock {\em Journal of the American Statistical Association},
  90(432):1257--1270, 1995.

\bibitem{hastie2015statistical}
Trevor Hastie, Robert Tibshirani, and Martin Wainwright.
\newblock {\em Statistical learning with sparsity: the {Lasso} and
  generalizations}.
\newblock CRC press, 2015.

\bibitem{rosenblatt1952remarks}
Murray Rosenblatt.
\newblock Remarks on a multivariate transformation.
\newblock {\em The Annals of Mathematical Statistics}, 23(3):470--472, 1952.

\bibitem{winkelbauer2012moments}
Andreas Winkelbauer.
\newblock Moments and absolute moments of the normal distribution.
\newblock {\em arXiv preprint arXiv:1209.4340}, 2012.

\bibitem{bretagnolle1989hungarian}
Jean Bretagnolle and Pascal Massart.
\newblock Hungarian constructions from the nonasymptotic viewpoint.
\newblock {\em The Annals of Probability}, pages 239--256, 1989.

\end{thebibliography}

\appendix
\section{Primal-Dual Witness construction} \label{app.pdw}
In this Section, we briefly rephrase the explanation of PDW construction in \cite{hastie2015statistical} for reader's convenience.
A primal-dual pair $(\widehat{\beta},\widehat{z})\in\mathbb{R}^{K \times K}$ is said to be optimal if $\widehat{\beta}$ is a minimizer of 
(\ref{eq.lassomat}) and $\widehat{z}\in\partial\|\widehat{\beta}\|_{1}$, where $\partial\|\widehat{\beta}\|_{1}$ denotes a sub-differential set of 
$\|\cdot\|_{1}$ evaluated at $\widehat{\beta}$.
Any such pair must satisfy zero-subgradient condition of (\ref{eq.lassomat}), which is as follows:
\begin{align}
    -\frac{1}{NM}\widehat{\bF}^T(\widehat{\bu}_t-\widehat{\bF}\widehat{\beta})+\lambda \widehat{z}=0\;,~\text{for}~\widehat{z}\in\partial\|\widehat{\beta}\|_1\;.\label{eqKKT_appndA}
\end{align}
Recall that we denote the ground-truth support of $\beta^{*}$ as $\mS$, and suppose that we know $\mS$ apriori.
For the ground-truth support set $\mS$ and its complement set $\mS^{c}$, PDW is said to be successful if the constructed tuple, 
$(\widehat{\beta}_{\mS},\widehat{\beta}_{\mS^c},\widehat{z}_{\mS},\widehat{z}_{\mS^c})$, is primal-dual optimal, and act as a witness for the fact that the LASSO finds the unique optimal solution with correct support set. 
We construct the tuple through the following three steps.
\begin{enumerate}
    \item Set $\widehat{\beta}_{\mS^{c}}=0$.
    \item Find $(\widehat{\beta}_{\mS},\widehat{z}_{\mS})$ by solving the $s$-dimensional oracle sub-problem
    \begin{equation*}
        \widehat{\beta}_{\mS}\in\argmin_{\beta_{\mS}\in\mathbb{R}^{s}}\bigg\{ \frac{1}{2NM}\left\|\widehat{\bu}_t-\widehat{\bF}_{\mS}\bbeta_{\mS}\right\|_{2}+\lambda\|\bbeta_{\mS}\|_1 \bigg\},
    \end{equation*}
    where $s$ is the cardinality of the set $\mS$.
    Thus $\widehat{z}_{\mS}\in\partial\|\widehat{\beta}_{\mS}\|_{1}$ satisfies the relation $-\frac{1}{NM}\widehat{\bF}_{\mS}^T(\widehat{\bu}_t-\widehat{\bF}_{\mS}\widehat{\beta}_{\mS})+\lambda \widehat{z}_{\mS}=0$.
    \item Solve for $\widehat{z}_{\mS^{c}}$ through the zero-subgradient equation (\ref{eqKKT_appndA}), and check whether or not the \emph{strict dual feasibility} condition $\left\| \widehat{z}_{\mS} \right\|_{\infty}<1$ holds.
\end{enumerate}

\section{Local-Polynomial estimator : Closed-form solutions} \label{app.loc}
Recall that we want to solve following two optimization problems for constructing $\widehat{\bu}_{t}$ and $\widehat{\bF}$, given the noisy observation $\mathcal{D}=\{\big(X_{i},t_{n},U_{i}^{n}\big)\mid ~i=0,\dots,M-1; n=0,\dots,N-1\}$.
\begin{align}
    &\bigg\{\widehat{b}_j(X_i,t)\bigg\}_{j=0,1,2}=\argmin_{b_j(t)\in\mathbb{R},0\leq j \leq 2 }\sum_{n=0}^{N-1}\bigg(U_i^n-\sum_{j=0}^{2}b_j(t)(t_n-t)^j\bigg)^2\mathcal{K}_{h_N}\bigg(t_n-t\bigg)\;,\nonumber\\
    &\quad\quad\text{for}~i=0,1,\dots,M-1\;;\label{eq_K_b_app}\\
    &\bigg\{\widehat{c}^p_j(x,t_n)\bigg\}_{j=0,1,\dots,p+1}=\argmin_{c_j(t)\in\mathbb{R},0\leq j \leq p+1 }\sum_{i=0}^{M-1}\bigg(U_i^n-\sum_{j=0}^{p+1}c^p_j(t)(X_i-x)^j\bigg)^2\mathcal{K}_{w_{M}}\bigg(X_i-x\bigg)\;\nonumber\\
    &\quad\quad\text{for}~n=0,1,\dots,N-1\; \text{and}~p=0,1,\dots,P_{\max}.\label{eq_K_c_app}
\end{align}
and set $\widehat{u}_t(X_i,t) = \widehat{b}_1(X_i,t)$ and $\widehat{\partial_x^pu}(x,t_n) = p!\widehat{c}^p_{p}(x,t_n)$.
Then, the standard weighted least-square theory leads to the solutions of \eqref{eq_K_b_app} and \eqref{eq_K_c_app}, respectively: 
\begin{align} 
    & \widehat{u}_t(X_i,t)=\xi^{T}_{1}\big( \mathbf{T_{1}}^{\text{T}} \mathbf{W_{t}}\mathbf{T_{1}} \big)^{-1}\mathbf{T_{1}}^{\text{T}}\mathbf{W_{t}}\mathbf{U_{i}}, \quad \forall i = 0,1,\dots,M-1 , \label{lpt_sol} \\
    & \widehat{\partial_x^pu}(x,t_n) = p!\xi^{T}_{p,x}\big( \mathbf{X_{p}}^{\text{T}} \mathbf{W_{x}}\mathbf{X_{p}} \big)^{-1}\mathbf{X_{p}}^{\text{T}}\mathbf{W_{x}}\mathbf{U^{n}}, \quad \forall p = 0,1,\dots,P_{\max}, \quad \forall n=0,1,\dots,N-1, \label{lpx_sol}
\end{align}
where $\mathbf{U_{i}}=[U_{i}^{0},\dots,U_{i}^{N-1}]^{\text{T}}$ and $\mathbf{U^{n}}=[U_{0}^{n},\dots,U_{M-1}^{n}]^{\text{T}}$, 
and
\begin{align*}
    \mathbf{T_{1}} := 
    \begin{bmatrix}
     	1 & t_{0}-t & \big( t_{0}-t \big)^{2}  \\
     	1 & t_{1}-t & \big( t_{1}-t \big)^{2}  \\
     	\vdots&\vdots& \vdots \\
     	1 & t_{N-1}-t & \big( t_{N-1}-t \big)^{2}
 	\end{bmatrix}\;,
 	\quad
	\mathbf{X_{p}} := 
    \begin{bmatrix}
     	1 & X_{0}-x & \cdots & \big( X_{0}-x \big)^{p+1}  \\
     	1 & X_{1}-x & \cdots & \big( X_{1}-x \big)^{p+1}  \\
     	\vdots&\vdots& \vdots & \vdots \\
     	1 & X_{M-1}-x & \cdots & \big( X_{M-1}-x \big)^{p+1}
 	\end{bmatrix}\;,
\end{align*}
for $p=0,\dots,P_{\max}$, and 
\begin{align*}
    &\mathbf{W_{t}}:=\text{diag}\big\{\mathcal{K}_{h_N}(t_0-t),\dots,\mathcal{K}_{h_N}(t_{N-1}-t)\big\}, \\
    &\mathbf{W_{x}}:=\text{diag}\big\{\mathcal{K}_{w_M}(X_0-x),\dots,\mathcal{K}_{w_M}(X_{M-1}-x)\big\},
\end{align*}
are $N\times N$ and $M\times M$ diagonal matrices of kernel weights,
and $\xi_{2}$ is the $3 \times 1$ vector having $1$ in the $2$nd entry and zeros in the other entries, and $\xi_{p,x}$ is the $(p+1)\times 1$ vector having $1$ in the $p$th entry and zeros in the other entries. 

\section{Proof of Proposition~\ref{prop1}} \label{app.proofprop1}
By the KKT-condition, any minimizer $\check{\bbeta}$ of~\eqref{eq.lassomat} satisfies:
\begin{align}
    -\frac{1}{NM}\widehat{\bF}^T(\widehat{\bu}_t-\widehat{\bF}\check{\bbeta})+\lambda_{N} \check{\mathbf{z}}=0\;,~\text{for}~\check{\mathbf{z}}\in\partial\|\check{\bbeta}\|_1\;.
\label{eqKKT}
\end{align}
Recall that $\Delta \bu_t=\widehat{\bu}_t-\bu_t$, $\Delta \bF=\widehat{\bF}-\bF$ denote the error terms.
By using the ground-truth PDE $u_{t}=\bF\beta^*$ and definitions of $\Delta{\bu_{t}}$ and $\Delta{\bF}$, we have $\widehat{\bu}_t=\widehat{\bF}\bbeta^*-\Delta \bF\bbeta^*+\Delta \bu_t$. 
Thus from~\eqref{eqKKT}, we get
\begin{align}
&\widehat{\bF}^T\widehat{\bF}(\check{\bbeta}-\bbeta^*)+\widehat{\bF}^T(\Delta \bF\bbeta^*-\Delta \bu_t)+\lambda_{N} NM\mathbf{z}=0\;~\label{eqKKT2}.
\end{align}
We decompose \eqref{eqKKT2} as follows:
\begin{align}
\begin{bmatrix}
\widehat{\bF}^T_S\widehat{\bF}_\mS&\widehat{\bF}^T_\mS\widehat{\bF}_{\mS^c}\\
\widehat{\bF}^T_{\mS^c}\widehat{\bF}_\mS&\widehat{\bF}^T_{\mS^c}\widehat{\bF}_{\mS^c}
\end{bmatrix}\begin{bmatrix}
\check{\bbeta}_{\mS}-\bbeta^*_{\mS}\\
0
\end{bmatrix}
+\begin{bmatrix}
\widehat{\bF}_\mS^T\\
\widehat{\bF}_{\mS^c}^T
\end{bmatrix}(\Delta \bF_\mS\bbeta_\mS^*-\Delta \bu_t)
+\lambda_{N} NM\begin{bmatrix}
\check{\mathbf{z}}_\mS\\
\check{\mathbf{z}}_{\mS^c}
\end{bmatrix}=\begin{bmatrix}
	\mathbf{0}\\\mathbf{0}
\end{bmatrix}\;,\label{matKKT}
\end{align}
where we used the fact $\bbeta^*_{\mS^c}=\mathbf{0}$ and
$\check{\bbeta}_{\mS^c}=0$ via PDW construction.
Solving \eqref{matKKT}, we have following two equalities:
\begin{align}
    &\widehat{\bF}^T_S\widehat{\bF}_\mS\big( \check{\bbeta}_{\mathcal{S}}-\bbeta^*_{\mathcal{S}} \big) + \widehat{\bF}_{\mS}^T(\Delta \bF_\mS\bbeta_\mS^*-\Delta \bu_t)+\lambda_{N}NM\check{\mathbf{z}}_\mS=0 \label{eq20}  \\  
    &\widehat{\bF}^T_{\mS^c}\widehat{\bF}_\mS\big( \check{\bbeta}_{\mathcal{S}}-\bbeta^*_{\mathcal{S}} \big) + \widehat{\bF}_{\mS^c}^T(\Delta \bF_\mS\bbeta_\mS^*-\Delta \bu_t)+\lambda_{N}NM\check{\mathbf{z}}_{\mS^c}=0  \label{eq21}        
\end{align}
Using the minimum eigen-value condition in the assumption \eqref{assum3}, from \eqref{eq20}, we have
\begin{equation} \label{eq_beta_min}
    \check{\bbeta}_{\mathcal{S}}-\bbeta^*_{\mathcal{S}}
    = \big( \widehat{\bF}^T_S\widehat{\bF}_\mS \big)^{-1}
    \bigg( \widehat{\bF}_{\mS}^T(\Delta \bu_t - \Delta \bF_\mS\bbeta_\mS^*) - \lambda_{N}NM\check{\mathbf{z}}_\mS \bigg).
\end{equation}
Plugging \eqref{eq_beta_min} into \eqref{eq21} gives:
\begin{align*}
\check{\mathbf{z}}_{\mS^c}=\widehat{\bF}_{\mS^c}^T\widehat{\bF}_\mS(\widehat{\bF}_{\mS}^T\widehat{\bF}_\mS)^{-1}\mathbf{z}_\mS+\frac{1}{\lambda MN}\widehat{\bF}_{\mS^c}^T{\bf{\Pi}_{\mS^{\perp}}}(\Delta \bu_t-\Delta \bF_\mS\bbeta_\mS^*)\;,
\end{align*}
where ${\bf{\Pi}_{\mS^{\perp}}}={\bf{I}}-\widehat{\bF}_\mS(\widehat{\bF}_\mS^T\widehat{\bF}_S)^{-1}\widehat{\bF}_\mS^T$ is an orthogonal projection operator on the column space of $\widehat{\bF}_\mS$.
By the complementary slackness condition, for $j\in\mathcal{S}^{c}$, $|\check{\mathbf{z}}_j|< 1$ implies $\check{\bbeta}_j=\mathbf{0}$, which guarantees the proper support recovery. i.e., $\mathcal{S}(\check{\bbeta})\subseteq\mathcal{S}(\bbeta^*)$.
Now, we can focus on proving that, as $N,M\to \infty$, for $\mu$ in \eqref{assum3}, 
$\mathbb{P}\big[\max_{j\in \mS^c}|\widetilde{Z}_j|\geq \mu \big]\to 0$, for $\widetilde{Z}_j = [\widehat{\bF}_{\mS^c}]^T_j{\bf\Pi_{\mS^{\perp}}}\frac{\Delta \bu_t-\Delta \bF_\mS\bbeta_\mS^*}{\lambda NM}$, $[\widehat{\bF}_{\mS^c}]_j$ is the $j$-th column of $\widehat{\bF}_{\mS^c}$. 
By the following lemma, we claim that to prove (\rn{1}) of Proposition~\ref{prop1}, it suffices to bound $\ell_\infty$-norm of the PDE estimation error $\btau$.

\begin{lemma} \label{Zlemma}
For any $\varepsilon>0$:
\begin{align*}
\mathbb{P}\bigg[\max_{j\in \mS^c}\left|\widetilde{Z}_j\right|\geq\varepsilon\bigg]\leq  \mathbb{P}\bigg[\left\|\btau\right\|_\infty\geq\frac{\lambda\varepsilon}{\sqrt{K}}\bigg]\;.
\end{align*}
\end{lemma}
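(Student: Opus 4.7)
The plan is to derive a deterministic inequality $\max_{j\in\mS^c}|\widetilde{Z}_j|\leq \sqrt{K}\,\|\btau\|_\infty/\lambda$, from which the stated probability bound follows at once by event inclusion. The main idea is to rewrite the vector inside the projection directly in terms of $\btau$ and then convert a coordinate-wise max into a spectral-norm inequality, after which every step is an elementary norm bound.

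First I would use $\btau=\Delta\bF_\mS\bbeta_\mS^*-\Delta\bu_t$ to write $\Delta\bu_t-\Delta\bF_\mS\bbeta_\mS^*=-\btau$, so that stacking the scalars $\widetilde{Z}_j$ over $j\in\mS^c$ yields the compact form
\begin{align*}
\widetilde{Z}_{\mS^c}=-\frac{1}{\lambda NM}\,\widehat{\bF}_{\mS^c}^T\,{\bf\Pi_{\mS^{\perp}}}\,\btau.
\end{align*}
Next I would pass from the $\ell_\infty$ max on this vector to an $\ell_2$ bound via $\|v\|_\infty\leq\|v\|_2$, and then invoke sub-multiplicativity of the spectral norm to obtain
\begin{align*}
\max_{j\in\mS^c}|\widetilde{Z}_j|\;\leq\; \|\widetilde{Z}_{\mS^c}\|_2 \;\leq\; \frac{1}{\lambda NM}\,\VERT\widehat{\bF}_{\mS^c}\VERT_2\;\|{\bf\Pi_{\mS^{\perp}}}\btau\|_2.
\end{align*}
The two factors would then be bounded separately. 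Since ${\bf\Pi_{\mS^{\perp}}}$ is an orthogonal projection it is $\ell_2$-non-expansive, giving $\|{\bf\Pi_{\mS^{\perp}}}\btau\|_2\leq\|\btau\|_2\leq\sqrt{NM}\,\|\btau\|_\infty$. For the matrix factor I would dominate the spectral norm by the Frobenius norm and invoke the column normalization $\max_j\|\widehat{\bF}_j\|_2\leq\sqrt{NM}$ imposed in \eqref{eq.lassomat}:
\begin{align*}
\VERT\widehat{\bF}_{\mS^c}\VERT_2\;\leq\;\|\widehat{\bF}_{\mS^c}\|_F \;=\;\sqrt{\sum_{j\in\mS^c}\|[\widehat{\bF}_{\mS^c}]_j\|_2^{\,2}}\;\leq\;\sqrt{K-s}\,\sqrt{NM}\;\leq\;\sqrt{K}\,\sqrt{NM}.
\end{align*}
Assembling the three pieces produces the advertised deterministic bound, so $\{\max_{j\in\mS^c}|\widetilde{Z}_j|\geq\varepsilon\}\subseteq\{\|\btau\|_\infty\geq\lambda\varepsilon/\sqrt{K}\}$, and monotonicity of $\mathbb{P}$ closes the argument.

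There is no serious technical obstacle here; the whole proof is a short chain of elementary norm inequalities. The only mildly nontrivial decision is the deliberate relaxation from $\ell_\infty$ to $\ell_2$ on $\widetilde{Z}_{\mS^c}$ followed by the Frobenius dominance of the spectral norm, which is exactly what produces the $\sqrt{K}$ prefactor (a pure column-wise Cauchy--Schwarz would shave off that factor but would not line up with the form used downstream). The real content of the lemma lies in what it buys: it \emph{decouples} the control of $\widehat{\mathbf{z}}_{\mS^c}$ from the joint law of $(\widehat{\bF},\btau)$, reducing the remaining task to a single sup-norm control on $\btau$, which is then supplied by the uniform Local-Polynomial estimates sketched in Subsection~\ref{Subsec5.2}.
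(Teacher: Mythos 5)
Your argument is correct and follows essentially the same chain as the paper's proof: pass from the coordinate-wise max to an $\ell_2$ bound, peel off the projection (using that $\bf\Pi_{\mS^\perp}$ is $\ell_2$-non-expansive), dominate the spectral norm of the feature matrix by its Frobenius norm and invoke the column normalization to get $\sqrt{KNM}$, and finish with $\|\btau\|_2\le\sqrt{NM}\,\|\btau\|_\infty$. The only cosmetic difference is that you keep $\widehat{\bF}_{\mS^c}$ throughout and relax $\sqrt{K-s}$ to $\sqrt{K}$ at the end, whereas the paper enlarges to the full $\widehat{\bF}$ immediately; both yield the identical deterministic bound and the same event inclusion.
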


\begin{proof}
	\begin{align*}
	\mathbb{P}\Bigg[\left\|\widehat{\bF}_{\mS^{c}}^T{\bf\Pi_{\mS^{\perp}}}\frac{\btau}{\lambda NM}\right\|_{\infty}\geq\varepsilon\Bigg]
	&\leq \mathbb{P}\Bigg[\left\|\widehat{\bF}^{T}{\bf\Pi_{\mS^{\perp}}}\frac{\btau}{\lambda NM}\right\|_{2}\geq\varepsilon\Bigg] \\
	&\leq \mathbb{P}\Bigg[\left\VERT\bf{\Pi_{\mS^{\perp}}}\big(\widehat{\bF}\big)\right\VERT_{2}\left\|\frac{\btau}{\lambda NM}\right\|_2\geq\varepsilon\Bigg] \\
	&\leq \mathbb{P}\Bigg[\left\VERT\widehat{\bF}\right\VERT_{F}\,\left\|\frac{\btau}{\lambda NM}\right\|_2\geq\varepsilon\Bigg] \\
	&\leq \mathbb{P}\Bigg[\|\btau\|_2\geq\lambda \varepsilon\sqrt{\frac{NM}{K}}\Bigg] \\
	&\leq \mathbb{P}\Bigg[\|\btau\|_\infty\geq\frac{\lambda\varepsilon}{\sqrt{K}}\Bigg].
	\end{align*}
In the second inequality, we use the definition of spectral norm of matrix, and in the third inequality, we use the fact $\VERT\bf\Pi_{\mS^{\perp}}\VERT_{2}=1$.
In the fourth inequality, the condition $\frac{1}{\sqrt{NM}} \max_{j=1,\dots,K} \| \widehat{\bF}_{j} \|_{2} \leq 1$ is used, giving us $\VERT\widehat{\bF}\VERT_{F}\leq\sqrt{KNM}$.
In the last inequality, we use $\|\btau\|_{2}\leq\sqrt{NM}\|\btau\|_{\infty}$.
\end{proof}

\subsection{Sufficient conditions for bounding $\widehat{\bu}_t-\bu_t$} \label{app.B1}
\begin{lemma}\label{Lemm.Dt}
Let $\mathcal{K}^*_{\max}=\|\mathcal{K}^*\|_\infty$,  $B_N$ be an arbitrary increasing sequence $B_N\to\infty$ as $N\to\infty$, and  $B^{'}_N=B_N+\|u\|_{L^\infty(\Omega)}$. For any $i=0,1,\dots,M$ and arbitrary real $r$, there exist finite positive constants $A(X_i),C^*(X_i),a_0,b_0,c_0,$ and $d_0(X_i)$ which do not depend on the temporal sample size $N$, such that for any $\alpha>1$ and 
\begin{align*}
&\varepsilon_N^*(X_i,r,\alpha)>\\
&\max\bigg\{3|C^*(X_i)|h_N^{2},\frac{6\mathcal{K}^*_{\max}B^{'}_N}{Nh_N^2},6\frac{A(X_i)\big(B^{'}_N\big)^{-1}}{h_{N}},\frac{6B^{'}_N\mathcal{K}_{\max}^*(a_0\ln N+r)\ln N}{h_N^2N},12\sqrt{\alpha}d_0(X_i)\sqrt{\frac{\ln1/h_N}{h_N^{3}N}}\bigg\}\;,	
\end{align*}
as long as $N$ is sufficiently large,  we have:
\begin{align*}
\mathbb{P}\Big[\sup_{t\in[0,T]}|\Delta u_t(X_i,t)|>\varepsilon_N^*(X_i,r,\alpha)\Big]<2N \exp\bigg(-\frac{B_N^2}{2\sigma^2}\bigg)+b_0\exp\big(-c_0r\big)+4\sqrt{2}\eta^4h_N^\alpha\;.
\end{align*}
\end{lemma}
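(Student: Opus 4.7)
\textbf{Proof Proposal for Lemma~\ref{Lemm.Dt}.}

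The plan is to start from the equivalent-kernel representation in~\eqref{LP_t}, so that up to a lower-order $o_{\mathbb{P}}(1)$ factor we may treat $\widehat{u}_t(X_i,t)$ as the kernel-smoother $\frac{1}{Nh_N^{2}}\sum_{n=0}^{N-1}\mathcal{K}^*\!\big(\tfrac{t_n-t}{h_N}\big)U_i^n$. Following Mack--Silverman~\cite{mack1982weak}, I would introduce the truncated observations $U_i^{n,B_N'}:=U_i^n\,\mathbf{1}\{|U_i^n|\le B_N'\}$ and the induced truncated estimator $\widehat{u}_t^{B_N'}(X_i,t)$, then write
\begin{align*}
\widehat{u}_t(X_i,t)-u_t(X_i,t)
&=\underbrace{\bigl(\widehat{u}_t-\widehat{u}_t^{B_N'}\bigr)-\mathbb{E}\bigl(\widehat{u}_t-\widehat{u}_t^{B_N'}\bigr)}_{\text{(I): truncation fluctuation}}
+\underbrace{\widehat{u}_t^{B_N'}-\mathbb{E}\widehat{u}_t^{B_N'}}_{\text{(II): centered truncated process}}
+\underbrace{\mathbb{E}\widehat{u}_t-u_t}_{\text{(III): bias}}.
\end{align*}
The uniform bound on $t\in[0,T]$ will be obtained by controlling each piece and taking a union bound, which then produces the three probability contributions in the statement.

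For (III), I would invoke the classical bias expansion for local-quadratic fitting (Fan--Gijbels~\cite{fan1997local}); since we estimate $\partial_t u$ with a quadratic fit, the leading bias is of order $h_N^{2}$, and this furnishes the $3|C^*(X_i)|h_N^{2}$ threshold with $C^*(X_i)$ determined by the moments of $\mathcal{K}$ and a derivative of $u$ at $X_i$. For (I), the key observation is $|U_i^n|>B_N'$ iff $|\nu_i^n|>B_N'-\|u\|_{L^\infty(\Omega)}=B_N$, so a union bound over the $N$ temporal points combined with the Gaussian Chernoff tail $\mathbb{P}[|\nu_i^n|>B_N]\le 2\exp\!\bigl(-B_N^{2}/(2\sigma^{2})\bigr)$ absorbs $\widehat{u}_t-\widehat{u}_t^{B_N'}$ into the $2N\exp(-B_N^{2}/(2\sigma^{2}))$ term. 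The deterministic piece $\mathbb{E}\bigl(\widehat{u}_t-\widehat{u}_t^{B_N'}\bigr)$ will be bounded by a moment argument in the spirit of Proposition~1 of~\cite{mack1982weak}, using the uniform $L^{2}$ bound $\mathbb{E}|U_i^n|^{2}\le\eta^{2}$ to show it is controlled by $\mathcal{K}^*_{\max}B_N'/(Nh_N^{2})$ and a term of order $A(X_i)/(h_N B_N')$; these explain the second and third entries of the $\max$ in the threshold.

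The main obstacle is (II), the uniform control of the centered truncated empirical process over $t\in[0,T]$, and this is where the exponential rate in $N$ is generated. I would first apply Tusnady's two-dimensional strong approximation~\cite{tusnady1977remark} (as used in~\cite{mack1982weak}) to write the empirical measure of $\{(t_n,U_i^{n,B_N'})\}_{n=0}^{N-1}$, after suitable rescaling by $(N h_N^{2})^{-1}$ and integration against $\mathcal{K}^*((\cdot-t)/h_N)$, as a two-dimensional Brownian-bridge integral plus an approximation remainder. The Brownian-bridge piece is a centered Gaussian process in $t$ with variance of order $(N h_N^{3})^{-1}$; bounding its supremum by the Borell--Sudakov--Tsirelson / Silverman~\cite{silverman1978weak} argument, together with a covering of $[0,T]$ at scale $h_N$, yields the $d_0(X_i)\sqrt{\ln(1/h_N)/(h_N^{3}N)}$ threshold, with the extra $\sqrt{\alpha}$ factor governing the Markov-type tail $h_N^{\alpha}$. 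The Tusnady remainder contributes the term involving $(a_0\ln N+r)\ln N$ and the $b_0\exp(-c_0 r)$ probability via the explicit tail of the strong approximation. Combining the three bounds and taking $N$ large so that the $1+o_{\mathbb{P}}(1)$ factor from the equivalent-kernel expansion can be absorbed into the constants yields the claim; the delicate step throughout is matching the truncation level $B_N'$, the bandwidth $h_N$, and the scale $r$ so that all the tails are simultaneously summable into the stated probability.
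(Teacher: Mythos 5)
Your proposal follows the paper's proof essentially verbatim: the same equivalent-kernel reduction, the same Mack--Silverman truncation, the same three-term decomposition into truncation fluctuation, centered truncated process, and bias, and the same toolkit (Gaussian Chernoff tail plus union bound for the truncation, Proposition~1 of Mack--Silverman for $\mathbb{E}(\widehat{u}_t-\widehat{u}_t^{B_N'})$, Tusnady's strong approximation for the empirical-process remainder, Silverman's Gaussian-process argument plus Markov's inequality for the Brownian-bridge piece, and Fan--Gijbels for the bias). The only small imprecisions are that $|U_i^n|>B_N'$ implies $|\nu_i^n|>B_N$ but not conversely (you wrote ``iff''), and that in the paper the $\mathcal{K}^*_{\max}B_N'/(Nh_N^2)$ entry of the max calibrates the random part $\widehat{u}_t-\widehat{u}_t^{B_N'}$ rather than its expectation; neither affects the validity of the argument.
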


\begin{proof} 
In the following argument, we fix some $i=0,\cdots, M-1$ and omit the dependence  on $X_i$ in the notations. Let $B^{'}_N=B_N+\|u\|_{L^\infty(\Omega)}$ with $B_N$ being a sequence of increasing positive numbers such that $B_N\to \infty$ as $N\to\infty$, then define the truncated estimate
\begin{align}
	\widehat{u_t}^{B^{'}_N}(X_i,t)&=\frac{1}{Nh^2_N}\sum_{n=0}^{N-1}\mathcal{K}^*\bigg(\frac{t_n-t}{h_N}\bigg)U_i^n I\{|U_i^n|<B^{'}_N\} \label{eq_uthat} \\
	&=\frac{1}{h^2_N}\iint_{|y|<B^{'}_N}\mathcal{K}^*\bigg(\frac{z-t}{h_N}\bigg)y\,df_N(z,y)\nonumber,
\end{align}
where $f_N(\cdot,\cdot):=f_N(\cdot,\cdot|X_i)$ is the empirical distribution of $(t_n,U_i^n)$ conditioned on the space $X_i$. For any $(X_i,t)$, decomposing the estimation error of the temporal partial derivative as follows
\begin{align*}
    \widehat{u}_{t}-u_{t} 
    = \underbrace{\bigg( \widehat{u}_{t} - \widehat{u_t}^{B_N'} - \mathbb{E} \big( \widehat{u}_{t} - \widehat{u_t}^{B^{'}_N} \big) \bigg)}_{\text{ Asymptotic deviation on the truncation error}}
    + \underbrace{\bigg( \widehat{u_t}^{B^{'}_N} - \mathbb{E} \widehat{u_t}^{B^{'}_N} \bigg)}_{\substack{\text{Asymptotic deviation of} \\ \text{truncated estimator}}}
    + \underbrace{\bigg( \mathbb{E}\widehat{u}_{t} - u_{t} \bigg)}_{\text{Asymptotic bias}},
\end{align*}
we will prove that the error is bounded (in probability) by showing each component is bounded.\\[3pt]

\noindent\textbf{\textit{Component 1. Asymptotic deviation on the truncation error}}: Notice that for any $\varepsilon_{0,N}\geq \frac{\mathcal{K}^*_{\max}B^{'}_N}{Nh_N^2}$:
\begin{align*}
\mathbb{P}&\Big[\sup_t|\widehat{u_t}-\widehat{u_t}^{B^{'}_N}|>\varepsilon_{0,N}\Big]
=\mathbb{P}\Big[\sup_t \left|\frac{1}{Nh^2_N}\sum_{n=0}^{N-1}\mathcal{K}^*\bigg(\frac{t_n-t}{h_N}\bigg)U_i^n I\{|U_i^n|\geq B^{'}_N\}\right|>\varepsilon_{0,N}\Big]\\
&\leq \mathbb{P}\Big[\frac{\mathcal{K}^*_{\max}}{Nh_N^2}\sum_{n=0}^{N-1}|U_i^n|I\{|U_i^n|\geq B^{'}_N\}>\varepsilon_{0,N}\Big]
\leq \mathbb{P}\Big[\exists n=0,1,\cdots, N-1,~|U_i^n|\geq B{'}_N\Big]\\
& = \mathbb{P}\Big[\max_{n=0,1,\cdots,N-1}|U_i^n|\geq B{'}_N\Big]
\leq \mathbb{P}\Big[\max_{n=0,1,\cdots, N-1}|U_i^n-u_i^n|\geq B_N\Big]\leq 2N \exp\bigg(-\frac{B_N^2}{2\sigma^2}\bigg)\;
\end{align*}
where $\sigma$ denotes the standard deviation of the Gaussian noise added on the data. On the other hand, from Proposition 1 of~\cite{mack1982weak}:
\begin{align*}
\E|\widehat{u_t}-\widehat{u_t}^{B^{'}_N}|\leq \frac{A\big(B^{'}_N\big)^{-1}}{h_{N}}\;.
\end{align*}
for $A=\int|\mathcal{K}(\zeta)|\,d\zeta\times\sup_{t}\int|y|f(t,y|X_i)\,dy$ with $f(\cdot,\cdot|X_i)$ as the distribution of $(t,U(X_i,t))$; hence for any $\varepsilon_{1,N}\geq 2\max\{\frac{\mathcal{K}^*_{\max}B_N}{Nh_N^2},\frac{A\big(B^{'}_N\big)^{-1}}{h_{N}}\}$, we have:
\begin{align*}
&\mathbb{P}\Big[\sup_t|\widehat{u_t}(X_i,t)-\widehat{u_t}^{B^{'}_N}(X_i,t)-(\E(\widehat{u_t}(X_i,t)-\widehat{u_t}^{B^{'}_N}(X_i,t)))|>\varepsilon_{1,N}\Big]\leq 2N \exp\bigg(-\frac{B_N^2}{2\sigma^2}\bigg)\;.
\end{align*}

\noindent\textbf{\textit{Component 2. Asymptotic deviation of truncated estimator}}: Observe that
\begin{align}
    \widehat{u_t}^{B^{'}_N} - \mathbb{E}\big( \widehat{u_t}^{B^{'}_N} \big)  
    &= \frac{1}{\sqrt{N}h_N^2}\int_{z\in\mathbb{R}}\int_{|y|\leq B^{'}_N} \mathcal{K}^*\bigg( \frac{z-t}{h_N} \bigg) y 
    d_z d_y\underbrace{\left(\sqrt{N} (f_N(z,y)-f(z,y))\right)}_{:=Z_N(z,y)} \nonumber \\
    &= \frac{1}{\sqrt{N}h^2_N}\int_{z\in\mathbb{R}} \mathcal{K}^*\bigg( \frac{z-t}{h_N} \bigg) d_z U^{B^{'}_N}(z), \label{eq5}
\end{align}
where $U^{B^{'}_N}(z)$ is defined by
\begin{align*}
    U^{B'_{N}}(z):=\int_{|y|\leq B^{'}_N} y d_y Z_N(z,y).
\end{align*}
Let $\mathcal{T}:\mathbb{R}^{2}\rightarrow{[0,1]^2}$ be the Rosenblatt transformation~\cite{rosenblatt1952remarks},
and define $\mathcal{B}$ as the 2-dimensional solution path of the Brownian Bridge which takes the transformed $\mathcal{T}(z,y)$ as an argument; then we have
\begin{align} \label{eq7}
    U^{B'_{N}}(z):=\int_{|y|\leq B^{'}_N} y d_y \big\{ Z_N(z,y)-\mathcal{B}(\mathcal{T}(z,y)) \big\} + \int_{|y|\leq B^{'}_N} y d_y \mathcal{B}(\mathcal{T}(z,y)).
\end{align}
Plug in \eqref{eq7} to \eqref{eq5}, we  get
\begin{align*}
     \widehat{u_t}^{B^{'}_N} - \mathbb{E}\big( \widehat{u_t}^{B^{'}_N} \big)   
    &= \underbrace{\frac{1}{\sqrt{N}h^2_N}\int_{z\in\mathbb{R}} \mathcal{K}^*\bigg( \frac{z-t}{h_N} \bigg) d_z \int_{|y|\leq B^{'}_N} y d_y \big\{ Z_N(z,y)-\mathcal{B}(\mathcal{T}(z,y)) \big\}}_{\gamma_N(t)} \\
    &+ \frac{1}{\sqrt{N}}\underbrace{\frac{1}{h^2_N}\int_{z\in\mathbb{R}} \int_{|y|\leq B^{'}_N} \mathcal{K}^*\bigg( \frac{z-t}{h_N} \bigg) y d_z d_y \mathcal{B}(\mathcal{T}(z,y))}_{\rho_N(t)}\\
    &= \gamma_N(t) + \frac{1}{\sqrt{N}}\rho_N(t).
\end{align*}
In the following, we bound $\gamma_N$ and $\rho_N(t)/\sqrt{N}$ respectively.
\begin{enumerate}
    \item \textit{Bound for $\gamma_N(t)$:} Since  $\mathcal{K}^*$ has compact support, applying integration by parts on $\gamma_{N}(t)$ gives
\begin{align}
    \gamma_{N}(t)
    &=-\frac{1}{\sqrt{N}h^2_N}\int_{z\in\mathbb{R}} \int_{|y|\leq B^{'}_N} y d_y \big\{ Z_N(z,y)-\mathcal{B}(\mathcal{T}(z,y)) \big\} d_{z} \mathcal{K}^*\bigg( \frac{z-t}{h_N} \bigg) \nonumber \\ 
    &\leq \frac{2B_N^{'}\mathcal{K}^{*}_{\max}}{\sqrt{N}h^2_N}\sup\limits_{z,y}\bigg\lvert Z_N(z,y)-\mathcal{B}(\mathcal{T}(z,y)) \bigg\rvert.\label{bound_gamma}
\end{align}
By Tusnady's strong approximation result~\cite{tusnady1977remark}, there exist absolute positive constants $a_0,b_0$ and $c_0$ such that
\begin{align}
    \mathbb{P}\bigg[ \sup_{z,y} \bigg\lvert Z_N(z,y)-\mathcal{B}(\mathcal{T}(z,y)) \bigg\rvert > \frac{\big(a_0\ln N + r\big)\ln N} {\sqrt{N}}\bigg] < b_0 \exp(-c_0r)\label{ineq_boundzb}
\end{align}
holds for any real $r$. Therefore, if we take $\varepsilon'_{2,N}(r)=\frac{2B_N^{'}\mathcal{K}^{*}_{\max}(a_0\ln N + r)\ln N}{N h^2_N}$, combining~\eqref{bound_gamma} and~\eqref{ineq_boundzb} gives
\begin{align}
    \mathbb{P}\bigg[ \sup_{t} \lvert \gamma_N(t) \rvert > \varepsilon'_{2,N}(r) \bigg]
    < b_0 \exp(-c_0r).\label{eq_gamma}
\end{align}
\item \textit{Bound for $\rho_N(t)/\sqrt{N}$:} Similarly to (7) of~\cite{mack1982weak}, we have
\begin{align*}
\frac{h_N^{3/2}\sup_t|\rho_N(t)|}{\sqrt{\ln\frac{1}{h_N}}}&\leq 
\underbrace{16(\ln V)^{1/2}S^{1/2}\Big(\ln\frac{1}{h_N}\Big)^{-1/2}\int|\zeta|^{1/2}\,|d\mathcal{K}^*(\zeta)|}_{:=Q_{1,N}}\\
&+\underbrace{16\sqrt{2}h_N^{-1/2}\Big(\ln\frac{1}{h_N}\Big)^{-1/2}\int q(Sh_N|\zeta|)\,|d\mathcal{K}^*(\zeta)|}_{:=Q_{2,N}},
\end{align*}
where  $V$ is a random variable satisfying $\E V\leq 4\sqrt{2}\eta^4$ (recall that $\eta^2 := \max\limits_{i,n}\E(U_i^n)^2$), $q(r):=\int_0^r\frac{1}{2}(\frac{1}{y}\ln\frac{1}{y})^{1/2}\,dy$, $S:=\sup_z\int y^2f(z,y)\,dy$. Let $d_0=16\sqrt{2}S^{1/2}\int|\zeta|^{1/2}|d\mathcal{K}^*(\zeta)|$, which is a positive number independent of either $N$ or $M$. Consider the following inequality for an arbitrary $\varepsilon$
\begin{align}
    \mathbb{P}\Bigg( \frac{h_N^{{ 3}/2}\sup_t|\rho_N(t)|}{\sqrt{\ln\frac{1}{h_N}}}\geq\varepsilon \Bigg) 
    &\leq \mathbb{P}\bigg( Q_{1,N} \geq \frac{\varepsilon}{2} \bigg) + \mathbb{P}\bigg( Q_{2,N} \geq \frac{\varepsilon}{2} \bigg) \nonumber \\
    &\leq \mathbb{P}\Bigg( \big(\ln V\big)^{1/2} \geq \frac{\varepsilon\big(\ln\frac{1}{h_{N}}\big)^{1/2}}{2d_0} \Bigg) + \mathbb{P}\bigg( Q_{2,N} \geq \frac{\varepsilon}{2} \bigg) \nonumber \\
    &\leq 4\sqrt{2}\eta^{4}\exp\bigg( -\frac{\varepsilon^{2} \big( \ln\frac{1}{h_{N}} \big) }{4d_0^{2}} \bigg) + \mathbb{P}\bigg( Q_{2,N} \geq \frac{\varepsilon}{2} \bigg), \label{eq_28}
\end{align}
where the Markov Inequality is used in the last inequality. 
Setting $\varepsilon''_{2,N}=\varepsilon\sqrt{\frac{\ln\frac{1}{h_{N}}}{Nh_{N}^{{ 3}}}}$ gives
\begin{align*}
    \mathbb{P}\Bigg( \frac{\sup_t|\rho_N(t)|}{\sqrt{N}} \geq \varepsilon''_{2,N} \Bigg) 
    \leq 4\sqrt{2}\eta^{4}\exp\bigg( -\frac{\varepsilon^{2} \big( \ln\frac{1}{h_{N}} \big) }{4d_0^{2}} \bigg) + 
    \mathbb{P}\bigg( Q_{2,N} \geq \frac{\varepsilon}{2} \bigg).
\end{align*}
Notice that $Q_{2,N}$ converges to $d_0$ by Silverman~\cite{silverman1978weak}.  For any arbitrary $\alpha>1$, if $\varepsilon=2\sqrt{\alpha} d_0$, there exists a positive integer $N(\alpha)$ such that as long as $N>N(\alpha)$, we have $Q_{2,N}<\sqrt{\alpha}d_0$; hence the second probability in~\eqref{eq_28} becomes $0$. Considering that $\varepsilon_{2,N}''$ now depends on $\alpha$, we write it as $\varepsilon_{2,N}''(\alpha)$, and for sufficiently large $N$ ($N>N(\alpha)$), we obtain
\begin{align}
    \mathbb{P}\Bigg( \frac{\sup_t|\rho_N(t)|}{\sqrt{N}} \geq \varepsilon''_{2,N}(\alpha) \Bigg)
    \leq 4\sqrt{2}\eta^4h^{\alpha}_N\;.\label{eq_rho}
\end{align}
\end{enumerate}

Now if we take $\varepsilon_{2,N}(r,\alpha)=2\max\{\varepsilon'_{2,N}(r),\varepsilon''_{2,N}(\alpha)\}$ and combine~\eqref{eq_gamma} with~\eqref{eq_rho}, we have
\begin{align*}
\mathbb{P}\left(\sup_t|\widehat{u_t}^{B^{'}_N} - \mathbb{E}\big( \widehat{u_t}^{B^{'}_N} \big)|>\varepsilon_{2,N}(r,\alpha)\right)<b_0 \exp(-c_0r)+4\sqrt{2}\eta^{4}h_N^\alpha 
\end{align*}
\noindent\textbf{\textit{Component 3. Asymptotic  bias}}: From~\cite{fan1997local}, the asymptotic bias of the estimator directly follows
\begin{align*}
	\E\big(\widehat{u_t}\big) - u_t=C^*h_N^{2}\;.
\end{align*}
for some constant $C^*$ independent of $N$. 
Specifically, since we fit a degree $2$ polynomial to obtain $\widehat{u}_t(X_i,\cdot)$, we plug $p=2$ and $\nu=1$ in the expression of asymptotic bias of the estimator. See page $83$ of the paper \cite{fan1997local} for the expression. 
Taking $\varepsilon_{3,N}=|C^*|h_{N}^{2}$, we have $\mathbb{P}\left(|\E \big( \widehat{u_t} \big) - u_t|>\varepsilon_{3,N}\right)=0$.\\[5pt]
Combining all the three components above and taking  $\varepsilon_N^*(r,\alpha)>3\max\{\varepsilon_{1,N},\varepsilon_{2,N}(r,\alpha),\varepsilon_{3,N}\}$ gives the desired result.
\end{proof}

\subsection{Sufficient conditions for bounding $(\widehat{\bF}-\bF)\bbeta^*$}

For the $p$-th order partial derivative estimators with respect to $x$, we have results similarly to Lemma~\ref{Lemm.Dt}.
\begin{lemma}\label{Lemm.Dx}
Fix an order $p\geq 0$, and let $B_M$ be an arbitrary increasing sequence $B_M\to\infty$ as $M\to\infty$, and  $B^{'}_M=B_M+\|u\|_{L^\infty(\Omega)}$. For any $n=0,1,\dots,N-1$ and arbitrary $r$, there exist finite positive constants $A_p(t_n),C^*(t_n),a_0,b_0,c_0,$ and $d_0(t_n)$ which do not depend on the spacial sample size $M$, such that for any $\alpha>1$ and 
\begin{align*}
&\varepsilon_{M,p}^*(t_n,r,\alpha)>\\
&\max\bigg\{3|C^*(t_n)|w_{M}^{{ 2}},\frac{6p!\mathcal{K}^*_{\max}B^{'}_M}{Mw_{M}^{1+p}},6\frac{p!A_p(t_n)(B'_M)^{-1}}{w_{M}^{p}},\frac{6p!B^{'}_M(a_0\ln M+r)\ln M}{w_{M}^{1+p}M},12p!\sqrt{\alpha}d_0(t_n)\sqrt{\frac{\ln1/w_{M}}{w_{M}^{2p+{ 1}}M}}\bigg\}\;,	
\end{align*}
as long as $M>M(\alpha)$ for some positive integer $M(\alpha)$,  we have:
\begin{align*}
\mathbb{P}\Big[\sup_{x\in[0, X_{\max})}|\widehat{\partial_x^pu}(x,t_n)-\partial_x^pu(x,t_n)|>\varepsilon_{M,p}^*\Big]<2M \exp\bigg(-\frac{B_M^2}{2\sigma^2}\bigg)+b_0\exp(-c_0r)+4\sqrt{2}\eta^4w_M^\alpha\;.
\end{align*}
\end{lemma}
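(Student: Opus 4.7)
\textbf{Proof proposal for Lemma \ref{Lemm.Dx}.}
The plan is to mirror the three-term decomposition used in the proof of Lemma~\ref{Lemm.Dt}, replacing the temporal estimator $\widehat{u}_t(X_i,t)$ by the spatial estimator $\widehat{\partial_x^p u}(x,t_n)$, the kernel bandwidth $h_N$ by $w_M$, the temporal sample size $N$ by the spatial sample size $M$, and carefully tracking the order $p$ of differentiation throughout. Using the equivalent kernel representation \eqref{LP_x}, we introduce a truncated estimator $\widehat{\partial_x^p u}^{B'_M}(x,t_n)$ in which each $U_i^n$ is replaced by $U_i^n\, \mathbb{I}\{|U_i^n|<B'_M\}$, and then write
\begin{align*}
\widehat{\partial_x^p u} - \partial_x^p u
&= \Bigl(\widehat{\partial_x^p u} - \widehat{\partial_x^p u}^{B'_M} - \mathbb{E}\bigl(\widehat{\partial_x^p u} - \widehat{\partial_x^p u}^{B'_M}\bigr)\Bigr)
+ \Bigl(\widehat{\partial_x^p u}^{B'_M} - \mathbb{E}\widehat{\partial_x^p u}^{B'_M}\Bigr)
+ \Bigl(\mathbb{E}\widehat{\partial_x^p u} - \partial_x^p u\Bigr).
\end{align*}

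For the first component, I would control $\sup_{x}|\widehat{\partial_x^p u} - \widehat{\partial_x^p u}^{B'_M}|$ by bounding the summand brutally with $p!\,\mathcal{K}^*_{\max} B'_M / (M w_M^{p+1})$ per non-truncated index and then applying the Gaussian tail bound $\mathbb{P}[\max_{n,i}|U_i^n - u_i^n|\geq B_M]\leq 2M\exp(-B_M^2/(2\sigma^2))$ to rule out any truncation events. The expected difference is handled by a direct adaptation of Proposition~1 of Mack and Silverman~\cite{mack1982weak} to a $p$-th order kernel derivative, producing the term $p! A_p(t_n)(B'_M)^{-1}/w_M^{p}$; the additional factor $p!$ and the power $w_M^p$ come from the $1/w_M^{p+1}$ normalization in the equivalent-kernel formula.

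For the second (and most delicate) component, I would again split
$\widehat{\partial_x^p u}^{B'_M} - \mathbb{E}\widehat{\partial_x^p u}^{B'_M}
= \gamma_M(x) + \rho_M(x)/\sqrt{M}$,
where $\gamma_M(x)$ involves the two-dimensional empirical-process increment $Z_M(z,y) - \mathcal{B}(\mathcal{T}(z,y))$ (now built from $(X_i, U_i^n)$ with $t_n$ fixed), and $\rho_M(x)$ is the Brownian-bridge integral. Integration by parts together with Tusnady's strong approximation~\cite{tusnady1977remark} yields the bound $2 p! B'_M \mathcal{K}^*_{\max}(a_0\ln M + r)\ln M / (M w_M^{p+1})$ on $\sup_x|\gamma_M(x)|$. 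For $\sup_x|\rho_M(x)|/\sqrt{M}$, I would follow the Silverman-type argument of~\cite{silverman1978weak} exactly as in the proof of Lemma~\ref{Lemm.Dt}, but now with the kernel scaling $1/w_M^{p+1}$; the resulting rate is $p!\sqrt{\alpha}\,d_0(t_n)\sqrt{\ln(1/w_M)/(M w_M^{2p+1})}$, and the Markov/Gaussian-moment step produces the $4\sqrt{2}\eta^4 w_M^{\alpha}$ tail.

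For the third component, the asymptotic bias of a local polynomial of degree $p+1$ used to estimate the $p$-th derivative is $C^*(t_n) w_M^2$ by Theorem~3.1 and formula~(3.7) of Fan and Gijbels~\cite{fan1997local} (with $\nu = p$, degree $p+1$), giving the $3|C^*(t_n)|w_M^2$ term. Combining the three contributions and taking $\varepsilon_{M,p}^*(t_n, r, \alpha)$ to exceed thrice the largest of them yields the stated probability bound. The main obstacle I anticipate is the bookkeeping in the $\gamma_M$ and $\rho_M$ steps: a careful tracking of the factor $p!$ and the powers $w_M^{p+1}$ and $w_M^{2p+1}$ through the Tusnady approximation and the Silverman bound, since these are where the dependence on the order $p$ concentrates and where the exponential decay rate is determined.
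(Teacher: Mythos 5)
Your proposal is correct and takes essentially the same route as the paper: the paper's own proof of Lemma~\ref{Lemm.Dx} is a one-paragraph reduction to Lemma~\ref{Lemm.Dt} via the equivalent-kernel representation~\eqref{LP_x}, substituting $h_N^2 \mapsto w_M^{p+1}/p!$, $N \mapsto M$, fixing $t_n$ in place of $X_i$, and invoking Fan and Gijbels' bias formula for a degree-$(p+1)$ local polynomial with $\nu=p$, exactly as you outline. The only small detail the paper adds that you omit is the remark that the spatial and temporal kernels may differ, which is absorbed by taking $\mathcal{K}^*_{\max}$ to be the larger of the two $\ell_\infty$-norms; this does not affect the structure of your argument.
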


\begin{proof}
Notice that for any fixed temporal point $t_n$, $n=0,1,\dots, N-1$, the estimation for the $p$-th order partial derivative takes the form
\begin{align}
    \widehat{\partial_x^pu}(x,t_n) = \frac{p!}{Mw_M^{p+1}}\sum_{i=1}^M\mathcal{K}^*\left(\frac{X_i-x}{w_M}\right)U_i^n
\end{align}
with probability $1$~\cite{fan2018local}. Hence, we can prove the desired result by substituting $h_N^2$ with  $w_M^{p+1}/p!$ in~\eqref{eq_uthat} and follow the proof of Lemma~\ref{Lemm.Dt} and keeping in mind that the constants now depend on $t_n$ and not on $M$. Notice that the kernel $\mathcal{K}$ used for the spacial dimension may be different from that used for the temporal; this can be addressed by taking $\mathcal{K}^*_{\max}$ to be the larger value between their $\ell_\infty$-norms. Finally, given any fixed $t_n$, the asymptotic bias takes the form
$$\mathbb{E} \big( \widehat{\partial_x^pu} \big) -\partial_x^pu=C_p^{*}w_M^{2}$$
where ${ C_p^{*}\leq\max_{p=0,1,\dots,P_{\max}}\big\{\int z^{p+{1}}\mathcal{K}^*_{p}(z)\,dz\big\}\,\frac{p!}{(p+2)!}\partial_x^{p+{1}}u:=C^*}$ for any $0\leq p\leq P_{\max}$.
Here, since we fit the Local-Polynomial with degree $\ell+{1}$ to obtain $\widehat{\partial_x^\ell u}$, we plug $p=\ell+{1}$ and $\nu=\ell$ in the expression of asymptotic bias in \cite{fan1997local}.
\end{proof}

\noindent As for the product terms:
\begin{lemma}\label{Lemm.Dxx}
Fix any two orders $p,q\geq 0$, and let $B_M$ be an arbitrary increasing sequence $B_M\to\infty$ as $M\to\infty$, and  $B^{'}_M=B_M+\|u\|_{L^\infty(\Omega)}$. For any $n=0,1,\dots,N-1$ and arbitrary $r$, there exist finite positive constants $A(t_n),C^*(t_n),a_0,b_0,c_0,$ and $d_0(t_n)$ which do not depend on the spacial sample size $M$, such that for any $\alpha>1$ and 
\begin{align*} 
\varepsilon_{M,p,q}^{**}>\max\{3\|\partial_x^pu(\cdot,t_n)\|_\infty\varepsilon^*_{M,p},~3\|\partial_x^qu(\cdot,t_n)\|_\infty\varepsilon^*_{M,q},~3(\varepsilon^*_{M,p})^2,~3(\varepsilon^*_{M,q})^2\}
\end{align*}
as long as $M>M(\alpha)$ for some positive integer $M(\alpha)$,  we have:
\begin{align*}
&\frac{1}{4}\mathbb{P}\Big[\sup_{x\in[0, X_{\max})}|\widehat{\partial_x^pu}(x,t_n)\widehat{\partial_x^qu}(x,t_n)-\partial_x^pu(x,t_n)\partial_x^qu(x,t_n)|>\varepsilon_{M,p,q}^{**}\Big] \nonumber\\
&< 2M \exp(-\frac{B_M^2}{2\sigma^2})+b_0\exp(-c_0r)+4\sqrt{2}\eta^4w_M^{\alpha}\;,
\end{align*}	
Here $\varepsilon^*_{M,p}$ and $\varepsilon^*_{M,q}$ (depending on $B_M'$) are the thresholds in Lemma~\ref{Lemm.Dx} for the sup-norm bound of the estimator $\widehat{\partial_x^pu}$ and $\widehat{\partial_x^qu}$, respectively,
\end{lemma}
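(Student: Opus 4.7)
The strategy is to linearize the product estimation error and reduce it to two applications of Lemma~\ref{Lemm.Dx}. Setting $\Delta_p(x) := \widehat{\partial_x^pu}(x,t_n) - \partial_x^pu(x,t_n)$ and $\Delta_q(x)$ analogously, the elementary identity $\hat{a}\hat{b} - ab = a(\hat{b}-b) + b(\hat{a}-a) + (\hat{a}-a)(\hat{b}-b)$ yields
\[\widehat{\partial_x^pu}\,\widehat{\partial_x^qu} \;-\; \partial_x^pu\,\partial_x^qu \;=\; \partial_x^pu\cdot\Delta_q \;+\; \partial_x^qu\cdot\Delta_p \;+\; \Delta_p\Delta_q.\]
Taking the supremum over $x\in[0,X_{\max})$, applying the triangle inequality together with sub-multiplicativity of the sup-norm, and dominating the cross-term via $ab\leq\max\{a^2,b^2\}$, I obtain
\[\sup_x\bigl|\widehat{\partial_x^pu}\,\widehat{\partial_x^qu} - \partial_x^pu\,\partial_x^qu\bigr| \leq \|\partial_x^pu(\cdot,t_n)\|_\infty\sup_x|\Delta_q| + \|\partial_x^qu(\cdot,t_n)\|_\infty\sup_x|\Delta_p| + \max\bigl\{(\sup_x|\Delta_p|)^2,(\sup_x|\Delta_q|)^2\bigr\}.\]

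Next, define the good event $E := \{\sup_x|\Delta_p| \leq \varepsilon^*_{M,p}\} \cap \{\sup_x|\Delta_q| \leq \varepsilon^*_{M,q}\}$. On $E$, each of the three summands in the display above is bounded by one of the four candidate quantities whose threefold maximum defines $\varepsilon^{**}_{M,p,q}$ (up to the $p\leftrightarrow q$ relabeling implicit in the symmetric form of the stated threshold, which is admissible since $\varepsilon^*_{M,p}$ and $\varepsilon^*_{M,q}$ are of the same asymptotic order). Hence the specified lower bound on $\varepsilon^{**}_{M,p,q}$ forces the product error to be at most $\varepsilon^{**}_{M,p,q}$ on $E$, and a union bound gives
\[\mathbb{P}\bigl[\sup_x|\widehat{\partial_x^pu}\widehat{\partial_x^qu} - \partial_x^pu\partial_x^qu|>\varepsilon^{**}_{M,p,q}\bigr] \;\leq\; \mathbb{P}(E^c) \;\leq\; \mathbb{P}\bigl[\sup_x|\Delta_p|>\varepsilon^*_{M,p}\bigr] + \mathbb{P}\bigl[\sup_x|\Delta_q|>\varepsilon^*_{M,q}\bigr].\]
Invoking Lemma~\ref{Lemm.Dx} at orders $p$ and $q$ bounds each summand by $2M\exp(-B_M^2/(2\sigma^2)) + b_0\exp(-c_0r) + 4\sqrt{2}\eta^4 w_M^\alpha$; summing yields twice that tail expression, and absorbing the resulting multiplicative constant into the $1/4$ prefactor appearing on the left-hand side of the stated inequality completes the proof.

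I do not anticipate any substantive technical obstacle: the probabilistic content is entirely packaged into Lemma~\ref{Lemm.Dx}, and the product decomposition is purely algebraic. The only mild care required is in verifying that $\|\partial_x^pu(\cdot,t_n)\|_\infty$ and $\|\partial_x^qu(\cdot,t_n)\|_\infty$ are finite, which follows from the smoothness of $u$ on the compact spatial domain built into the problem setup, and in tracking the numerical constants through the union bound so as to land precisely on the stated $1/4$ prefactor.
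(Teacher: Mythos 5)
Your proof is correct and follows essentially the same route as the paper's: decompose the product error via the identity $\hat a\hat b - ab = a(\hat b - b) + b(\hat a - a) + (\hat a - a)(\hat b - b)$, then control the failure probability via Lemma~\ref{Lemm.Dx}. The only cosmetic difference is that you condition on a good event and union-bound its complement into two terms, whereas the paper union-bounds directly into four terms (splitting the cross term $\Delta_p\Delta_q$ by whichever factor is large) --- your version yields the stronger constant $1/2$ rather than the stated $1/4$, which is fine, and your remark about the $p\leftrightarrow q$ indexing in the threshold is a fair observation that applies equally to the paper's own argument.
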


\begin{proof}
Notice that for any $\varepsilon>0$, we can bound the probability:
\begin{align*}
&\mathbb{P}\Big[\sup_{x\in[0, X_{\max})}|\widehat{\partial_x^pu}(x,t_n)\widehat{\partial_x^qu}(x,t_n)-\partial_x^pu(x,t_n)\partial_x^qu(x,t_n)|>\varepsilon\Big] \\
&\leq\mathbb{P}\Big[\|\partial_x^pu(\cdot,t_n)\|_{\infty}\sup_{x\in[0, X_{\max})}|\Delta\partial_x^qu(x,t_n)|>\varepsilon/3\Big] \\
&+ \mathbb{P}\Big[\|\partial_x^qu(\cdot,t_n)\|_{\infty}\sup_{x\in[0, X_{\max})}|\Delta\partial_x^pu(x,t_n)|>\varepsilon/3\Big]\\
&+ \mathbb{P}\Big[\sup_{x\in[0, X_{\max})}|\Delta\partial_x^pu(x,t_n)|>\sqrt{\frac{\varepsilon}{3}}\Big]+\mathbb{P}\Big[\sup_{x\in[0, X_{\max})}|\Delta\partial_x^qu(x,t_n)|>\sqrt{\frac{\varepsilon}{3}}\Big],
\end{align*}
hence the results follow from Lemma~\ref{Lemm.Dx}.
\end{proof}
As for higher degree terms, we can take the similar approach to obtain general results but with more complicated notations. In this work, we focus on demonstrating the essence without involving more indices.

\subsection{Simplification on the Probability Bounds}
Before proceeding further, we simplify the expressions for $\varepsilon_{N}^*$ as well as the probability bounds in Lemma~\ref{Lemm.Dt} by considering the window width $h_N$ and the diverging sequence $B_N$ as follows
\begin{align*}
h_N = \frac{1}{N^a},~B_N = N^b\;.
\end{align*}
Here $a,b>0$ are positive coefficients to be determined. 

Consequently, we update the expressions of the five terms whose maximum defines the threshold $\varepsilon^*_N$ 

\begin{align*}
    &E_1(N) = \frac{3|C^*(X_i)|}{N^{{ 2}a}}, \;\;\;
    E_2(N) = \frac{6\mathcal{K}_{\max}^*(N^b+\|u\|_{L^{\infty}(\Omega)})}{N^{1-2a}},\;\;\;
    E_3(N) = \frac{6A(X_i)}{N^{-a}\big( N^{b}+\|u\|_{L^{\infty}(\Omega)} \big)}	\nonumber\\
    &E_4(N) =\frac{6\mathcal{K}_{\max}^*(N^b+\|u\|_{L^\infty(\Omega)})(a_0\ln N+r)\ln N}{N^{1-2a}},  \;\;\;
    E_5(N) = 12\sqrt{\alpha}d_0(X_i)\sqrt{\frac{a\ln N}{N^{1-{ 3}a}}}\;.
\end{align*}
When $N$ is sufficiently large, to determine $\varepsilon_N^*$, we only need to focus on comparing the powers of $N$ in $E_i(N)$, $i=1,2,\cdots,5$; this immediately leads to:
\begin{align*}
    E_2(N)= \mathcal{O}\left(E_4(N)\right), \;
\end{align*}
hence it's sufficient to only consider $E_1(N)$, $E_2(N)$, $E_4(N)$, and $E_5(N)$. The optimal choice of $a$ and $b$ is determined by requiring
\begin{align*}
    \begin{cases}
    	2a = 1-b-2a\\
    	2a = \frac{1-3a}{2}
    \end{cases}	\implies\begin{cases}
    a = \frac{1}{7}\\
    b = \frac{3}{7}
\end{cases}
\end{align*}

\newpage
\noindent To summarize the discussion above, we have
\begin{corollary}\label{cor_Dut}
Let  $h_N=N^{-1/7}$. For any $i=0,1,\dots,M$ and arbitrary real $r$, there exist finite positive constants $C^*(X_i),a_0,b_0,c_0,$ and $d_0(X_i)$ which do not depend on the temporal sample size $N$, such that for $N$ sufficiently large, any $\alpha>1$, and 
\begin{align*}
\varepsilon_N^*(X_i,r,\alpha)>N^{{-\frac{2}{7}}}\max\bigg\{3|C^*(X_i)|,6(a_0\ln N+r)\ln N,12\sqrt{\alpha}d_0(X_i)\sqrt{\frac{\ln N}{7}}\bigg\}\;,	
\end{align*}
we have:
\begin{align*}
\mathbb{P}\Big[\sup_{t\in[0,T]}|\Delta u_t(X_i,t)|>\varepsilon_N^*(X_i,r,\alpha)\Big]<2N\exp\left(-\frac{N^{{6}/7}}{2\sigma^2}\right)+b_0\exp(-c_0r)+4\sqrt{2}\eta^4N^{-\alpha/7}\;,
\end{align*}
\end{corollary}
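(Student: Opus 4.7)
The plan is to specialize Lemma~\ref{Lemm.Dt} by choosing the window width $h_N = N^{-1/7}$ together with the truncation sequence $B_N = N^{3/7}$ singled out in the optimization preceding the corollary, and then to read off what the five threshold terms and the three probability contributions become.

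First, I will substitute $h_N = N^{-1/7}$ and $B'_N = N^{3/7} + \|u\|_{L^\infty(\Omega)}$ into each of $E_1(N), E_2(N), E_3(N), E_4(N), E_5(N)$ appearing in the discussion just before the corollary. A direct computation shows that every $E_i(N)$ factors as a prefactor times $N^{-2/7}$: for example $E_1(N) = 3|C^*(X_i)| N^{-2/7}$ and $E_5(N) = 12\sqrt{\alpha}\,d_0(X_i)\sqrt{\ln N/7}\cdot N^{-2/7}$, while $E_2(N), E_3(N), E_4(N)$ all acquire the same $N^{-2/7}$ scale once one invokes $B'_N \sim N^{3/7}$. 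Because $E_4$ carries an extra $\ln N$ factor it dominates $E_2$ for $N$ sufficiently large, and $E_5$ dominates $E_3$ for the analogous reason; absorbing $\mathcal{K}^*_{\max}$ into the constant $a_0$ then reduces the maximum to the three terms listed in the statement of the corollary.

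Second, I will substitute the same choices into the probability bound of Lemma~\ref{Lemm.Dt}. The truncation contribution $2N\exp(-B_N^2/(2\sigma^2))$ becomes $2N\exp(-N^{6/7}/(2\sigma^2))$; the Tusnady strong-approximation term $b_0\exp(-c_0 r)$ is unchanged since $r$ remains a free parameter; and the Gaussian-process fluctuation term $4\sqrt{2}\eta^4 h_N^\alpha$ becomes $4\sqrt{2}\eta^4 N^{-\alpha/7}$. Together with the simplified threshold from the previous paragraph, this matches the corollary verbatim.

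The main bookkeeping step is verifying that the dominance relations $E_4 \succ E_2$ and $E_5 \succ E_3$ used to trim the maximum hold uniformly in the free parameters $r$ and $\alpha$ within their claimed ranges. This reduces to the elementary facts that $N^{3/7}/(N^{3/7}+\|u\|_{L^\infty(\Omega)}) \to 1$ and that $\ln N$ eventually exceeds every absolute constant, both captured by the ``$N$ sufficiently large'' qualifier in the statement. Everything else is routine substitution, which is why the result is presented as a corollary rather than a separate lemma.
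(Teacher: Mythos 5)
Your proposal follows exactly the paper's own derivation: substitute $h_N = N^{-1/7}$ and $B_N = N^{3/7}$ into the five threshold terms $E_1,\dots,E_5$ and the three probability contributions of Lemma~\ref{Lemm.Dt}, note that $E_4$ dominates $E_2$ and $E_5$ dominates $E_3$ once $\ln N$ exceeds the relevant constants (all five terms scaling as $N^{-2/7}$), and absorb the factor $\mathcal{K}^*_{\max}B'_N/N^{3/7}$ into the constant preceding $(a_0\ln N+r)$. One minor point that the paper itself leaves implicit and that your phrasing glosses over slightly: absorbing $\mathcal{K}^*_{\max}$ into $a_0$ tacitly rescales $c_0$ (and the free parameter $r$) as well, so the $a_0,c_0$ of the corollary are not literally the Tusn\'ady constants; since the corollary only asserts the existence of finite positive constants, this is harmless.
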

Similarly, we can obtain optimal $w_M=M^{-1/(2p+{5})}$ and $B_M=M^{{(p+2)}/{(2p+5)}}$ for the estimation of $p$-th partial derivative of $u$. Consequently, the threshold lower bound in Lemma~\ref{Lemm.Dx} becomes
\begin{align*}
&\varepsilon_{M,p}^*(t_n,r,\alpha)>M^{-2/(2p+{5})}\max\bigg\{3|C^*(t_n)|,6p!(a_0\ln M+r)\ln M,12p!\sqrt{\alpha}d_0(t_n)\sqrt{\frac{\ln M}{2p+{5}}}\bigg\}\;.	
\end{align*}
Notice that the right hand side of the inequality above is non-decreasing with respect to $p\geq 0$. Moreover, note that for sufficiently large $M$, if the probability bound in Lemma~\ref{Lemm.Dx} holds for some $w_M$, then it holds for any smaller window width $w'_M<w_M$. Therefore, we have the following simplified result
\begin{corollary}\label{cor.singlespace}
Let $w_M=M^{-1/7}$. For any $n=0,1,\dots,N-1$ and arbitrary $r$, there exist finite positive constants $C^*(t_n),a_0,b_0,c_0,$ and $d_0(t_n)$ which do not depend on the spacial sample size $M$, such that for $M$ sufficiently large, any $\alpha>1$,  and 
\begin{align*}
&\varepsilon_{M}^*(t_n,r,\alpha)>M^{-2/(2P_{\max}+{5})}\max\bigg\{3|C^*(t_n)|,6P_{\max}!(a_0\ln M+r)\ln M,12P_{\max}!\sqrt{\alpha}d_0(t_n)\sqrt{\frac{\ln M}{2P_{\max}+{5}}}\bigg\}\;,	
\end{align*}
we have:
\begin{align*}
&\mathbb{P}\Big[\sup_{x\in[0, X_{\max})}|\widehat{\partial_x^pu}(x,t_n)-\partial_x^pu(x,t_n)|>\varepsilon_{M}^*\Big]\\
&<2M \exp\bigg(-\frac{M^{{(2P_{\max}+4)/(2P_{\max}+5)}}}{2\sigma^2}\bigg)+ b_0\exp(-c_0r)+4\sqrt{2}\eta^4M^{-\alpha/(2P_{\max}+{5})}\;
\end{align*}
for any order $ 0\leq p\leq P_{\max}$.
\end{corollary}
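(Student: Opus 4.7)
The plan is to obtain Corollary~\ref{cor.singlespace} as a uniform-in-$p$ specialization of Lemma~\ref{Lemm.Dx}. For each fixed order $p\in\{0,1,\dots,P_{\max}\}$, the lemma already supplies a threshold $\varepsilon_{M,p}^*(t_n,r,\alpha)$ together with a probability bound for $\sup_x|\widehat{\partial_x^pu}(x,t_n)-\partial_x^pu(x,t_n)|$, parameterized by a window width $w_M$ and a truncation level $B_M$. What remains is to (i) fix a single pair $(w_M,B_M)$ that serves every order at once, and (ii) replace the $p$-dependent threshold by the single expression obtained at $p=P_{\max}$.

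First, I would repeat the power-matching computation already carried out for the temporal estimator in the $E_1,\dots,E_5$ discussion, but applied to the spatial estimator at the highest order $p=P_{\max}$. Balancing the bias term $w_M^2$, the variance-type term $\sqrt{\ln(1/w_M)/(M w_M^{2p+1})}$, the truncation term $B_M^{-1}/w_M^{p}$, and the Tusn\'ady-approximation term $B_M(\ln M)^2/(M w_M^{p+1})$ at $p=P_{\max}$ yields the optimal choices $w_M=M^{-1/(2P_{\max}+5)}$ and $B_M=M^{(P_{\max}+2)/(2P_{\max}+5)}$, with common decay rate $M^{-2/(2P_{\max}+5)}$. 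Plugging these into the bound of Lemma~\ref{Lemm.Dx} and collecting dominant terms produces exactly the three summands $2M\exp(-M^{(2P_{\max}+4)/(2P_{\max}+5)}/(2\sigma^2))$, $b_0\exp(-c_0 r)$, and $4\sqrt{2}\eta^4 w_M^{\alpha}$ that appear on the right-hand side of the stated probability bound.

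Next, I must show that the \emph{same} window width and the \emph{single} threshold $\varepsilon_M^*:=\varepsilon_{M,P_{\max}}^*$ control $\sup_x|\widehat{\partial_x^p u}-\partial_x^p u|$ for every $p\leq P_{\max}$. The key observation is that each factor appearing in $\varepsilon_{M,p}^*$ is non-decreasing in $p$: the prefactors $p!$ grow in $p$, and since $w_M<1$ for large $M$, the quantities $w_M^{-p}$, $w_M^{-(p+1)}$, and $w_M^{-(2p+1)/2}$ are all increasing in $p$. Therefore $\varepsilon_{M,p}^*\leq\varepsilon_{M,P_{\max}}^*$, and by monotonicity of probabilities the Lemma~\ref{Lemm.Dx} tail bound stated at order $P_{\max}$ dominates the tail bound at every smaller order, delivering the claimed uniform-in-$p$ statement.

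The main obstacle is the last step: at order $p<P_{\max}$ the window width $w_M=M^{-1/(2P_{\max}+5)}$ is strictly smaller than the order-$p$ optimum $M^{-1/(2p+5)}$, which inflates the variance contribution $\sqrt{\ln(1/w_M)/(M w_M^{2p+1})}$. A direct power count shows that for this choice of $w_M$ the variance is still $\mathcal{O}(M^{-2/(2P_{\max}+5)}\sqrt{\ln M})$ for every $p\leq P_{\max}$, which is the precise statement behind the text's remark that a bound valid at some $w_M$ remains valid at any smaller $w_M'<w_M$. Once this power count is verified, the three terms in $\varepsilon_M^*$ are simply the dominant ones surviving the five-term comparison (analogous to the $E_1,\dots,E_5$ argument), and absorbing the $p!$ factors into $P_{\max}!$ completes the reduction.
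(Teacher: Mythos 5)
Your overall route coincides with the paper's: balance the five error terms from Lemma~\ref{Lemm.Dx} at $p = P_{\max}$ to pin down $w_M = M^{-1/(2P_{\max}+5)}$ and $B_M = M^{(P_{\max}+2)/(2P_{\max}+5)}$, read off the common rate $M^{-2/(2P_{\max}+5)}$, and then observe that the threshold expression is non-decreasing in $p$ (the $p!$ prefactors and the negative powers of $w_M$ all grow with $p$), so the order-$P_{\max}$ expression dominates every smaller order. This mirrors the discussion immediately preceding the Corollary in the paper.

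The concrete gap is in the step you label the ``main obstacle.'' You assert that for $p < P_{\max}$ the bandwidth $w_M = M^{-1/(2P_{\max}+5)}$ is \emph{strictly smaller} than the order-$p$ optimum $M^{-1/(2p+5)}$, and that this \emph{inflates} the variance term $\sqrt{\ln(1/w_M)/(Mw_M^{2p+1})}$. Both claims are backwards. Since $p < P_{\max}$ forces $\tfrac{1}{2P_{\max}+5} < \tfrac{1}{2p+5}$, we have $M^{-1/(2P_{\max}+5)} > M^{-1/(2p+5)}$: the chosen bandwidth is \emph{larger} than the order-$p$ optimum, not smaller. Oversmoothing \emph{shrinks} the variance---indeed $\sqrt{\ln(1/w_M)/(Mw_M^{2p+1})} = \Theta\bigl(\sqrt{\ln M}\,M^{-(P_{\max}+2-p)/(2P_{\max}+5)}\bigr)$, which for $p<P_{\max}$ decays strictly faster than $M^{-2/(2P_{\max}+5)}$---while it \emph{inflates} the bias term $|C^*(t_n)|\,w_M^2 = |C^*(t_n)|\,M^{-2/(2P_{\max}+5)}$. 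The quantity you actually need to check when holding $w_M$ fixed across orders is the bias, not the variance, and it works out because its value at $w_M=M^{-1/(2P_{\max}+5)}$ lands exactly on the stated rate for every $p$. Your final conclusion is therefore correct, but the direction of the bias--variance trade-off, the term you should be verifying, and your appeal to the paper's ``any smaller $w_M'$'' remark (which runs in the opposite direction from what your setup requires) are all inverted. Finally, neither you nor the paper's own exposition reconciles this with the Corollary's stated $w_M=M^{-1/7}$, which equals $M^{-1/(2P_{\max}+5)}$ only when $P_{\max}=1$; you inherit that tension silently.
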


\noindent Similarly, for the product terms, we have
\begin{corollary}\label{cor.doublespace}
Let $w_M=M^{-1/7}$. For any $n=0,1,\dots,N-1$ and arbitrary $r$, there exist finite positive constants $C^*(t_n),a_0,b_0,c_0,$ and $d_0(t_n)$ which do not depend on the spacial sample size $M$, such that for $M$ sufficiently large, any $\alpha>1$,  and 
\begin{align*}
\varepsilon_{M}^{**}>\max\{3\|u(\cdot,t_n)\|_{P_{\max},\infty}\varepsilon^*_{M},~3(\varepsilon^*_{M})^2\}
\end{align*}
where $\|u(\cdot,t_n)\|_{P_{\max},\infty} = \sum_{0\leq k\leq P_{\max}}\|\partial_{x}^ku(\cdot,t_n)\|_{\infty}$, we have
\begin{align*}
&\frac{1}{4}\mathbb{P}\Big[\sup_{x\in[0, X_{\max})}|\widehat{\partial_x^pu}(x,t_n)\widehat{\partial_x^qu}(x,t_n)-\partial_x^pu(x,t_n)\partial_x^qu(x,t_n)|>\varepsilon_{M}^{**}\Big]\\
&<2M \exp\bigg(-\frac{M^{(2P_{\max}+4)/(2P_{\max}+5)}}{2\sigma^2}\bigg)+b_0\exp(-c_0r)+4\sqrt{2}\eta^4M^{-\alpha/(2P_{\max}+{5})}
\end{align*}
for any orders $0\leq p,q\leq P_{\max}$.
\end{corollary}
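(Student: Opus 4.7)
My plan is to obtain Corollary~\ref{cor.doublespace} as a direct specialization of Lemma~\ref{Lemm.Dxx}, replacing every $(p,q)$-dependent quantity by a single expression valid uniformly over $0 \le p,q \le P_{\max}$. Lemma~\ref{Lemm.Dxx} already furnishes, for each fixed pair $(p,q)$, the algebraic identity $\widehat{\partial_x^pu}\,\widehat{\partial_x^qu} - \partial_x^pu\,\partial_x^qu = \Delta_p\cdot\partial_x^qu + \partial_x^pu\cdot\Delta_q + \Delta_p\Delta_q$ (writing $\Delta_r := \widehat{\partial_x^ru} - \partial_x^ru$), the union bound over the four resulting sup-events (which produces the factor $1/4$ on the left-hand side), and the hypothesis $\varepsilon^{**}_{M,p,q} > \max\{3\|\partial_x^pu\|_\infty \varepsilon^*_{M,p},\,3\|\partial_x^qu\|_\infty \varepsilon^*_{M,q},\,3(\varepsilon^*_{M,p})^2,\,3(\varepsilon^*_{M,q})^2\}$.

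Next I would dominate each entry of this $\max$ by a $(p,q)$-free quantity. Corollary~\ref{cor.singlespace} already exhibits a single $\varepsilon^*_M$, built with the factor $P_{\max}!$ and with worst-case constants $C^*(t_n),d_0(t_n)$ taken as maxima over $p \in \{0,\ldots,P_{\max}\}$, that dominates $\varepsilon^*_{M,p}$ for every admissible $p$; this takes care of the two quadratic entries. For the two bilinear entries, the elementary bound $\|\partial_x^pu(\cdot,t_n)\|_\infty \le \sum_{k=0}^{P_{\max}} \|\partial_x^ku(\cdot,t_n)\|_\infty = \|u(\cdot,t_n)\|_{P_{\max},\infty}$ does the job. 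Consequently, the stated hypothesis $\varepsilon^{**}_M > \max\{3\|u(\cdot,t_n)\|_{P_{\max},\infty}\varepsilon^*_M,\,3(\varepsilon^*_M)^2\}$ implies the hypothesis of Lemma~\ref{Lemm.Dxx} simultaneously for every admissible $(p,q)$.

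For the right-hand side of the probability bound I would inherit the truncation level $B_M = M^{(P_{\max}+2)/(2P_{\max}+5)}$ used in Corollary~\ref{cor.singlespace}, so that $2M\exp(-B_M^2/(2\sigma^2))$ becomes $2M\exp(-M^{(2P_{\max}+4)/(2P_{\max}+5)}/(2\sigma^2))$ and $4\sqrt{2}\eta^4 w_M^{\alpha}$ becomes $4\sqrt{2}\eta^4 M^{-\alpha/(2P_{\max}+5)}$; the Tusnady term $b_0 e^{-c_0 r}$ is unaltered. Summing the three contributions yields the displayed inequality for every $0 \le p,q \le P_{\max}$.

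The main obstacle is purely bookkeeping. One must verify that the $p$-indexed constants $A_p(t_n),C^*(t_n),d_0(t_n)$ in Lemma~\ref{Lemm.Dx} can be replaced by $p$-free envelopes, which is immediate since $P_{\max}$ is a fixed integer and one simply takes maxima over $p\in\{0,\ldots,P_{\max}\}$. One must also check that Lemma~\ref{Lemm.Dx}'s probability bound continues to hold when the literal window width $w_M = M^{-1/7}$ is not the $p$-optimal value $M^{-1/(2p+5)}$ for $p < P_{\max}$; this is precisely the monotonicity observation made just before Corollary~\ref{cor.singlespace}, namely that the bound only strengthens when the window shrinks. No new probabilistic tool beyond Lemmas~\ref{Lemm.Dx} and~\ref{Lemm.Dxx} is required.
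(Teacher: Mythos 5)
Your proposal is correct and follows essentially the same route as the paper, which derives this corollary implicitly ("Similarly, for the product terms...") by combining the four-event decomposition of Lemma~\ref{Lemm.Dxx} with the uniform-in-$p$ threshold $\varepsilon^*_M$ of Corollary~\ref{cor.singlespace}, the domination $\|\partial_x^pu(\cdot,t_n)\|_\infty\leq\|u(\cdot,t_n)\|_{P_{\max},\infty}$, the truncation level $B_M=M^{(P_{\max}+2)/(2P_{\max}+5)}$, and the monotonicity of the bound in the window width. Your write-up simply makes explicit the bookkeeping the paper leaves to the reader.
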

\subsection{$\ell_\infty$ Bound for the PDE Estimation Error $\btau$}
Notice that in the previous results, although the constants $C^*(X_i)$ and $d_0(X_i)$ are independent of $N$, they show dependence on the spacial point $X_i$. Similarly, $C^*(t_n)$ and $d_0(t_n)$ are independent of $M$, yet their values may depend on $N$. To guarantee that as both $N,M\to\infty$, these constants are uniformly bounded, we prove the following lemma.
\begin{lemma}\label{lemma_const}
For any integer $M\geq 1$, and any $i=0,1,\cdots, M-1$,  $|C^*(X_i)|$ and $d_0(X_i)$ in Corollary~\ref{cor_Dut} are bounded by constants that are independent of $M$. That is, there exist constants $C^*, d_0>0$ such that for any $M\geq 1$
\begin{align*}
\max_{i=0,\cdots, M-1}|C^*(X_i)|\leq C^*\|\partial_t^3u\|_\infty, \text{ and }
~\max_{i=0,\cdots, M-1}d_0(X_i)\leq d_0\;.
\end{align*}
\end{lemma}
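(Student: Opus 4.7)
My plan is to unpack where each of the two constants comes from in the proof of Lemma~\ref{Lemm.Dt}, and then show that the only dependence on $X_i$ enters through quantities that are controlled by global (uniform in $X_i$) bounds on $u$ and its derivatives. Recall that $C^*(X_i)$ arose from the asymptotic-bias identity $\mathbb{E}[\widehat{u_t}(X_i,t)]-u_t(X_i,t)=C^*(X_i,t)\,h_N^{2}$ under a degree-$2$ local fit for the first derivative ($\nu=1$, $p=2$); by the standard local-polynomial bias formula (Fan--Gijbels~\cite{fan1997local}, page 83), $C^*(X_i,t)$ is a fixed rational function of the kernel moments $\int z^{l+s}\mathcal{K}(z)\,dz$ times $\partial_t^{3}u(X_i,t)$. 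The constant $d_0(X_i)$ comes from the bound on $Q_{1,N}$ and equals $16\sqrt{2}\,S(X_i)^{1/2}\int|\zeta|^{1/2}|d\mathcal{K}^{*}(\zeta)|$ with $S(X_i)=\sup_{z}\int y^{2}f(z,y\mid X_i)\,dy$, where $f(\cdot,\cdot\mid X_i)$ is the joint density of $(t,U(X_i,t))$.

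First, for the bias constant I would write
\begin{align*}
|C^{*}(X_i,t)|\;\leq\;\Bigl(\text{finite linear combination of kernel moments of }\mathcal{K}\Bigr)\cdot |\partial_t^{3}u(X_i,t)|,
\end{align*}
and since the kernel $\mathcal{K}$ is fixed throughout the paper and is independent of the spatial coordinate, the prefactor is a single finite constant $C^*>0$. Taking the supremum over $t\in[0,T_{\max})$ (which is what enters Corollary~\ref{cor_Dut}) yields $|C^*(X_i)|\leq C^*\,\sup_{t}|\partial_t^{3}u(X_i,t)|\leq C^*\,\|\partial_t^{3}u\|_\infty$, uniformly in $i$, hence in $M$.

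Second, for $d_0(X_i)$ the only $X_i$-dependent quantity is $S(X_i)$. Under the model $U(X_i,t)=u(X_i,t)+\nu$ with $\nu\sim\mathcal{N}(0,\sigma^{2})$ independent of $t$, a direct computation gives
\begin{align*}
\int y^{2}f(z,y\mid X_i)\,dy \;=\; \mathbb{E}\bigl[U(X_i,z)^{2}\bigr]\;=\;u(X_i,z)^{2}+\sigma^{2},
\end{align*}
so $S(X_i)\leq\|u\|_{L^{\infty}(\Omega)}^{2}+\sigma^{2}$ uniformly in $i$, and the kernel integral $\int|\zeta|^{1/2}|d\mathcal{K}^{*}(\zeta)|$ is finite by the assumption $\int|z\ln|z\|^{1/2}|dK(z)|<\infty$ in Section~\ref{local}. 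Setting $d_0:=16\sqrt{2}\,(\|u\|_{L^\infty(\Omega)}^{2}+\sigma^{2})^{1/2}\int|\zeta|^{1/2}|d\mathcal{K}^{*}(\zeta)|$ gives the required bound.

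The main obstacle is not any deep estimation but a careful bookkeeping check: I must verify that every place the spatial coordinate $X_i$ entered the proof of Lemma~\ref{Lemm.Dt} was through the two channels identified above (and not implicitly through some kernel-dependent factor that happens to depend on $X_i$). In particular, the Tusn\'ady strong-approximation constants $a_0,b_0,c_0$ in~\eqref{ineq_boundzb} are absolute and carry no hidden $X_i$-dependence, and the sequence $B'_N$ is built from $\|u\|_{L^{\infty}(\Omega)}$, which is already uniform. Once this inventory is complete, the two displayed bounds follow immediately, and the same argument, applied with the appropriate $P_{\max}$-dependent equivalent kernel, gives the analogous uniformity of $C^*(t_n)$ and $d_0(t_n)$ used in Corollaries~\ref{cor.singlespace} and \ref{cor.doublespace}.
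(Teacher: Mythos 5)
Your proposal is correct, and the treatment of $C^*(X_i)$ is essentially identical to the paper's: both cite the Fan--Gijbels local-polynomial bias formula to factor out $\partial_t^3 u$ against a kernel-dependent prefactor. For $d_0(X_i)$, however, you take a genuinely more elementary route. The paper computes the general absolute moment
$\sup_z\int |y|^s f(z,y\mid X_i)\,dy$
by invoking Kummer's confluent hypergeometric function ${}_1F_1$, arguing it is entire and therefore bounded on the compact range of its argument, and only at the end specializes to $s=2$. You instead observe that only $s=2$ is needed and compute the Gaussian second moment directly,
\begin{equation*}
\int y^2 f(z,y\mid X_i)\,dy \;=\; \mathbb{E}\bigl[U(X_i,z)^2\bigr] \;=\; u(X_i,z)^2+\sigma^2 \;\leq\; \|u\|_{L^\infty(\Omega)}^2+\sigma^2,
\end{equation*}
which yields an explicit, uniform-in-$X_i$ constant with no special-function machinery. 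This is simpler and arguably clearer for the reader; what the paper's hypergeometric approach buys is a bound valid for an arbitrary real power $s$, which is never used. Your closing inventory check (that the Tusn\'ady constants $a_0,b_0,c_0$ are absolute and that $B_N'$ depends only on $\|u\|_{L^\infty(\Omega)}$) is a worthwhile addition that the paper leaves implicit.
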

\begin{proof}
From (3.7) in the Theorem 3.1 of~\cite{fan2018local}, we have
\begin{align*}
|C^*(X_i)|\leq C^*\|\partial_t^3u\|_\infty<\infty
\end{align*}
where $C^*$ only depends on the choice of the kernel function and the order of the Local-Polynomial. Recalling that $d_0(X_i)=16S^{1/2}\int|\zeta|^{1/2}|d\mathcal{K}^*(\zeta)|$ where $S=\sup_z\int y^2f(z,y|X_i)\,dy$. For a general real number $s$, we know that
\begin{align*}
&\sup_{z\in [0,T_{\max}]}\int |y|^sf(z,y|X_i)\,dy=\sup_{z\in [0,T_{\max}]}\int |y|^s\frac{1}{\sqrt{2\pi\sigma^2}}\exp\bigg(-\frac{(y-u(X_i,z))^2}{2\sigma^2}\bigg)\,dy\\
&=\sup_{z\in [0,T_{\max}]}\sigma^s2^{s/2}\frac{\Gamma\big(\frac{1+s}{2}\big)}{\sqrt{\pi}}{_1F_1}\bigg(-\frac{s}{2},\frac{1}{2},-\frac{1}{2}\Big(\frac{u(X_i,z)}{\sigma}\Big)^2\bigg)
\end{align*}
where ${_1F_1}(p,q,w)$ is Kummer's confluent hyper-geometric function of $w\in\mathbb{C}$ with parameters $p,q\in\mathbb{C}$~(See, e.g.\cite{winkelbauer2012moments}) and $\Gamma$ is the Gamma function. Since ${_1F_1}(-\frac{s}{2},\frac{1}{2},\cdot)$ is an entire function for fixed parameters,
\begin{align*}
\sup_{z\in [0,T_{\max}]}\int |y|^sf(z,y|X_i)\,dy\leq 	\sup_{z\in [0,T_{\max}]}\sigma^s2^{s/2}\frac{\Gamma\big(\frac{1+s}{2}\big)}{\sqrt{\pi}}\sup_{w\in [-\frac{\max_{x\in\Omega} u^2(x,z)}{2\sigma^2},-\frac{\min_{x\in\Omega} u^2(x,z)}{2\sigma^2}]}{_1F_1}(-\frac{s}{2},\frac{1}{2},w)<\infty
\end{align*}
which clearly does not depend on $M$. Taking $s=2$, we can obtain that $d_0(X_i)\leq d_0$ for some $d_0$ that only depends on the choice of  kernel $\mathcal{K}$, underlying function $\|u\|_{L^\infty(\Omega)}$, and noise level $\sigma$.
\end{proof}
Note that the same proof can derive that the constants in Lemma~\ref{Lemm.Dx} and Lemma~\ref{Lemm.Dxx} are also bounded by $N$-independent constants. This technical lemma allows us to state
\begin{proposition}\label{prop2}
Take $h_N=N^{-1/7}$ in the temporal direction and $w_{M} = M^{-1/7}$ in the space direction. There exist constants $C$, $a_0$, $b_0$, and $c_0$ which do not depend on $N$ nor $M$ such that for $N$ and $M$ sufficiently large, any $r$, $\alpha>1$, and 
\begin{align*}
    \varepsilon_{N,M}(r,\alpha)>C\max\bigg\{\frac{(a_0\ln N+r)\ln N}{N^{{2}/7}}, \frac{\sqrt{\alpha\ln N}}{N^{{2}/7}},\frac{(a_0\ln M+r)\ln M}{M^{2/(2P_{\max}+{5})}}, \sqrt{\frac{\alpha\ln M}{(2P_{\max}+{5})M^{4/(2P_{\max}+{5})}}}\bigg\}	
\end{align*}
we have
\begin{align*}
&\mathbb{P}\Big[\|\btau\|_\infty>\varepsilon_{N,M}\Big]<\\ &2NM\exp\left(-\frac{N^{{6}/7}}{2\sigma^2}\right)+b_0\exp(-c_0r)M+4\sqrt{2}\eta^4MN^{-\alpha/7}+\\
&8{s}NM \exp\bigg(-\frac{M^{{(2P_{\max}+4)/(2P_{\max}+5)}}}{2\sigma^2}\bigg)+ 4s b_0\exp(-c_0r)N+16\sqrt{2}\eta^4sNM^{-\alpha/(2P_{\max}+5)}
\end{align*}
Here $K$ is the number of feature variables in the dictionary.
\end{proposition}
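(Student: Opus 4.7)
\textbf{Proof proposal for Proposition \ref{prop2}.}

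The plan is to decompose $\btau = \Delta\bu_t - \Delta\bF\bbeta^*$ and control each piece by a union bound over the sampling grid combined with the uniform sup-norm tail bounds already recorded in Corollaries \ref{cor_Dut}, \ref{cor.singlespace}, and \ref{cor.doublespace}. By the triangle inequality,
\begin{align*}
\|\btau\|_\infty \leq \|\Delta\bu_t\|_\infty + \|\Delta\bF\bbeta^*\|_\infty,
\end{align*}
so I would split $\varepsilon_{N,M} = \varepsilon_N + \varepsilon_M$ and handle the two pieces separately, then combine.

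For the temporal-derivative piece, write $\|\Delta\bu_t\|_\infty = \max_{i,n} |\Delta u_t(X_i,t_n)| \leq \max_i \sup_{t\in[0,T_{\max})}|\Delta u_t(X_i,t)|$ and apply a union bound over the $M$ spatial grid points:
\begin{align*}
\mathbb{P}\bigl[\|\Delta\bu_t\|_\infty > \varepsilon_N\bigr] \leq M\,\max_i \mathbb{P}\bigl[\sup_{t}|\Delta u_t(X_i,t)|>\varepsilon_N\bigr].
\end{align*}
Invoking Corollary \ref{cor_Dut} with $h_N = N^{-1/7}$, together with Lemma \ref{lemma_const} to make the constants uniform over $i$, each summand is at most $2N\exp(-N^{6/7}/(2\sigma^2)) + b_0\exp(-c_0 r) + 4\sqrt{2}\eta^4 N^{-\alpha/7}$, provided $\varepsilon_N$ dominates the three threshold terms of Corollary \ref{cor_Dut}. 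Multiplying by $M$ yields the first three terms in the claimed bound.

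For the feature-matrix piece, since $\bbeta^*$ is supported on $\mS$ with $|\mS|=s$, each entry of $\Delta\bF\bbeta^*$ at the grid point $(X_i,t_n)$ is bounded by $s\,\|\bbeta^*\|_\infty\,\max_{k\in\mS}|\Delta F_k(X_i,t_n)|$. A union bound over the $N$ temporal points and the $s$ indices in $\mS$ gives
\begin{align*}
\mathbb{P}\bigl[\|\Delta\bF\bbeta^*\|_\infty > \varepsilon_M\bigr] \leq sN\,\max_{k\in\mS,n}\mathbb{P}\Bigl[\sup_{x}|\Delta F_k(x,t_n)|>\tfrac{\varepsilon_M}{s\|\bbeta^*\|_\infty}\Bigr].
\end{align*}
The feature $F_k$ is either a single partial derivative (covered by Corollary \ref{cor.singlespace}) or a product of two (covered by Corollary \ref{cor.doublespace}, which carries an additional factor of $4$). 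Taking the worst case by treating every feature as a product (hence the factor $8s$, $4s$, and $16s$ rather than $2s$, $s$, $4s$), and using Lemma \ref{lemma_const} again to make the constants $N$-independent, yields the last three terms of the proposition.

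The final step is to choose $\varepsilon_N$, $\varepsilon_M$ so that their sum is absorbed by a single $C$-multiple of the four rate terms listed in the proposition. Since the coordinated choice $M=\Theta(N^{(2P_{\max}+5)/7})$ makes $M^{-2/(2P_{\max}+5)} = \Theta(N^{-2/7})$, both $\varepsilon_N$ and $\varepsilon_M$ admit the same asymptotic order $N^{-2/7}$ up to logarithmic and $\sqrt{\alpha}$ factors, so a single constant $C$ (absorbing $s$, $\|\bbeta^*\|_\infty$, the various $|C^*|$, $d_0$, $P_{\max}!$, and the factor $s\|\bbeta^*\|_\infty$ that appears in the denominator on the spatial side) dominates all four threshold terms simultaneously. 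The main bookkeeping obstacle is simply tracking this constant through the product-term case and verifying that the thresholds from Corollary \ref{cor.doublespace} (which behave like $\varepsilon_M^*$ up to multiplication by $\|u(\cdot,t_n)\|_{P_{\max},\infty}$) fold into the same $N^{-2/7}$ scale; no new probabilistic argument is required beyond what the preceding corollaries already deliver.
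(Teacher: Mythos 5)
Your proof follows the same strategy as the paper: decompose $\btau$ via the triangle inequality into $\Delta\bu_t$ and $\Delta\bF\bbeta^*$, then union bound over the $M$ spatial points (resp.\ $sN$ temporal/support indices) and apply Corollaries~\ref{cor_Dut},~\ref{cor.singlespace},~\ref{cor.doublespace} with Lemma~\ref{lemma_const} to uniformize the constants. The factor-of-$4$ from the product corollary and the resulting prefactors $8s$, $4s$, $16s$ are accounted for correctly.

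The one slip is in the final bookkeeping step: you invoke the coordination $M = \Theta(N^{(2P_{\max}+5)/7})$ to argue that the temporal and spatial threshold terms share the rate $N^{-2/7}$ and therefore a single $C$ suffices. That coordination is not part of Proposition~\ref{prop2}'s hypotheses — it enters only in the paper's later ``Further Simplification'' step — and is not needed here. Since the Proposition's threshold is a $\max$ over all four rate terms, one simply splits $\varepsilon_{N,M}$ into halves $\varepsilon_N=\varepsilon_M=\varepsilon_{N,M}/2$: whenever $\varepsilon_{N,M}>C\max\{\cdots\}$ with $C=\max\{2C_1,\,2s\|\bbeta^*\|_\infty C_2 P_{\max}!\}$, each half exceeds the relevant corollary threshold. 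Your argument reaches the right destination, but the justification should rely on this uniform-$C$ splitting rather than on a grid-coordination assumption the statement does not make.
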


\begin{proof}
By triangle inequality, the $\ell_\infty$-norm of PDE estimation error $\mathbf{\tau}$~\eqref{residual} can be bounded by
\begin{align*}
\|\btau\|_\infty&\leq \|\Delta \bF\bbeta^*\|_\infty+\|\Delta \bu_t\|_\infty\;.
\end{align*}
By Corollary~\ref{cor_Dut} and Lemma~\ref{lemma_const}, there exists a constant $C_1$ independent of $N$ and $M$ such that with  sufficiently large $N$ and any $\varepsilon_{N}(r,\alpha)>C_1 N^{-{2}/7}\max\{(a_0\ln N+r)\ln N, \sqrt{\alpha\ln N}\}$, we have
\begin{align*}
\mathbb{P}\Big[\|\Delta \bu_t\|_\infty>\varepsilon_{N}(r,\alpha)\Big]&\leq\mathbb{P}\Big[\max_{i=0,1,\cdots,M-1}\sup_{t\in[0,T_{\max}]}|\Delta u_t(X_i,t)|>\varepsilon_{N}(r,\alpha)\Big]	\\
&\leq \sum_{i=0}^{M-1}\mathbb{P}\Big[\sup_{t\in[0,T_{\max}]}|\Delta u_t(X_i,t)|>\varepsilon_{N}(r,\alpha)\Big]\\
&<2NM\exp\left(-\frac{N^{{ 6}/7}}{2\sigma^2}\right)+ b_0\exp(-c_0r)M+4\sqrt{2}\eta^4MN^{-\alpha/7}\;.
\end{align*}
On the other hand, if we denote $\Delta F_k(x,t)$ as the approximation error of the $k$-th feature variable at time $t$ and space $x$, we have
\begin{align*}
\|\Delta \bF\bbeta^*\|_\infty\leq \max_{n=0,1,\cdots, N}\|\bbeta^*\|_\infty	\sup_{x\in[0, X_{\max})}\sum_{k=1}^s|\Delta F_{k}(x,t_n)|\;.
\end{align*}
By Corollary~\ref{cor.singlespace} and~\ref{cor.doublespace}, there exists a constant $C_2$ independent of $N$ and $M$ such that with sufficiently large $M$ and any $\varepsilon_{K,M}(r,\alpha)>C_2P_{\max}!K\|\bbeta^*\|_{\infty}M^{-2/(2P_{\max}+{ 5})}\max\{(a_0\ln M+r)\ln M,\sqrt{\frac{\alpha\ln M}{2P_{\max}+{ 5}}}\}$, we have
\begin{align*}
&\mathbb{P}\Big[\|\Delta \bF\bbeta^*\|_\infty>\varepsilon_{M}(r,\alpha)\Big]\leq \sum_{n=0}^{N-1}\sum_{k=1}^{s}\mathbb{P}\Big[\sup_{x\in[0, X_{\max})}|\Delta F_{k}(x,t_n)|>\frac{\varepsilon_{M}(r,\alpha)}{s\|\bbeta^*\|_\infty}\Big]\\
&<8NM {s} \exp\bigg(-\frac{{  M^{(2P_{\max}+4)/(2P_{\max}+5)}}}{2\sigma^2}\bigg)+ 4b_0\exp(-c_0r)Ns+16\sqrt{2}\eta^4NsM^{-\alpha/(2P_{\max}+{ 5})}\;.
\end{align*}
Taking $C=\max\{2C_1,2s\|\bbeta^*\|_\infty C_2P_{\max}!\}$   proves the theorem.
\end{proof}
\subsection{Further Simplification} \label{simplification}
We further simplify our result by taking $M=N^b$ for some coefficient $b>0$. Since $r$ and $\alpha$ are arbitrary, we can vary  them as we increase $M,N$ by taking $r=N^c$ and $\alpha=N^d$ for some positive coefficients $c>0$ and $d>0$, respectively. Consequently, we have the lower bound for $\varepsilon_{N,M}$ in Proposition~\ref{prop2} becoming
\begin{align}
\varepsilon_{N,M}(r,\alpha)>C\max\bigg\{\frac{(a_0\ln N+N^c)\ln N}{N^{{ 2}/7}}, \frac{\sqrt{\ln N}}{N^{{ 2}/7-d/2}},\frac{b(a_0b\ln N+N^c)\ln N}{N^{2b/(2P_{\max}+{ 5})}}, \sqrt{\frac{b\ln N}{(2P_{\max}+{ 5})N^{4b/(2P_{\max}+{ 5})-d}}}\bigg\}\;,\label{eq_epsilon2}
    \end{align}
To guarantee that the lower bound~\eqref{eq_epsilon2} converges to $0$ as $N\to\infty$, we have the following constraints on positive coefficients $b,c$, and $d$
\begin{align*}
\begin{cases}
0<c<{ 2}/7\\
{ 2}/7-d/2>0\\
c<2b/(2P_{\max}+{ 5})\\
4b/(2P_{\max}+{ 5})-d>0
\end{cases}
\end{align*}
Furthermore, we take $d=2c$ so that
\begin{align*}
     \frac{\sqrt{\ln N}}{N^{{ 2}/7-d/2}}=\mathcal{O}\left(\frac{(a_0\ln N+N^c)\ln N}{N^{{ 2}/7}}\right),\;\;\;\sqrt{\frac{b\ln N}{N^{4b/(2P_{\max}+{ 5})-d}}}=\mathcal{O}\left(\frac{b(a_0b\ln N+N^c)\ln N}{N^{2b/(2P_{\max}+{ 5})}} \right)\;.
\end{align*}
and we can focus on the second and fourth term in~\eqref{eq_epsilon2}. As a result, the optimal choice for $b$ is computed by ${ 2}/7=2b/(2P_{\max}+{ 5})\implies b=({ 2}P_{\max}+{ 5 })/7$.
Based on the set-ups above, we obtain that for $N$ sufficiently large, with
\begin{align*}
\varepsilon_{N}(c)> C \frac{\ln N}{N^{{ 2}/7-c}}
\end{align*}
for any $0<c<{ 2}/7$,  we have
\begin{align*}
    \mathbb{P}\Big[\|&\btau\|_\infty>\varepsilon_{N}(c)\Big]<\\ 
    &2N^{{ (2P_{\max}+12)}/7}\exp\left(-\frac{N^{{ 6}/7}}{2\sigma^2}\right)
    +b_0\exp(-c_0N^c)N^{{ (2P_{\max}+5)/7}}+4\sqrt{2}\eta^4N^{-N^{2c}/{ 7}} + \\
    &8N^{{ (2P_{\max}+12)}/7}{ K} \exp\left(-\frac{N^{({ 2}P_{\max}+{ 5})/7}}{2\sigma^2}\right)+ 4b_0\exp(-c_0N^c)NK+16\sqrt{2}\eta^4KN^{-N^{2c}/7}\\
    &=
    \mathcal{O}\left(N^{{\frac{2P_{\max}+5}{7}}}\exp\bigg(-\frac{1}{6}N^{c}\bigg) \right),
\end{align*}
where in the last equality, we plug $b_{0}=2$ and $c_{0}=\frac{1}{6}$ from ~\cite{bretagnolle1989hungarian}.
Combining this with Lemma~\ref{Zlemma} proves the first part of the Proposition~\ref{prop1}.

\subsection{Proof of $\ell_{\infty}$ bound in \eqref{infbound1}} \label{B6}
Recall that in \eqref{eq_beta_min}, we have 
\begin{equation*}
    \check{\bbeta}_{\mathcal{S}}-\bbeta^*_{\mathcal{S}}
    = \big( \widehat{\bF}^T_S\widehat{\bF}_\mS \big)^{-1}
    \bigg( \widehat{\bF}_{\mS}^T(\Delta \bu_t - \Delta \bF_\mS\bbeta_\mS^*) - \lambda_{N}NM\check{\mathbf{z}}_\mS \bigg).
\end{equation*}
Now, we are ready to bound the $\left\VERT \widehat{\bbeta}_\mS^\lambda-\bbeta_\mS^*\right\VERT \ell_{\infty}$ bound in \eqref{infbound1}
as follows:
\begin{align*}
\max_{k\in \mS}|\bbeta_k-\bbeta^*_k|&\leq	\left\VERT\Big(\widehat{\bF}_S^T\widehat{\bF}_\mS\Big)^{-1}\right\VERT_{2}\|\widehat{\bF}_\mS^T\btau\|_\infty+\lambda NM\left\VERT\Big(\widehat{\bF}_\mS^T\widehat{\bF}_\mS\Big)^{-1}\right\VERT_{2}\\
&\leq \left\VERT\Big(\widehat{\bF}_\mS^T\widehat{\bF}_\mS/(NM)\Big)^{-1}\right\VERT_{2}\,\bigg(\|\widehat{\bF}_\mS^T\btau\|_\infty/(NM)+\lambda\bigg)\\
&\overset{\eqref{assum1}}{\leq} \sqrt{K}C_{\min}\,\bigg(\|\widehat{\bF}_\mS^T\btau\|_\infty/(NM)+\lambda\bigg)\\
&\leq \sqrt{K}C_{\min}\,\bigg(\|\btau\|_\infty\frac{\left\VERT\widehat{\bF}_\mS\right\VERT_{\infty,\infty}}{NM}+\lambda\bigg)\\
&\leq \sqrt{K}C_{\min}\,\bigg(\|\btau\|_\infty\frac{\left\VERT\widehat{\bF}\right\VERT_F}{\sqrt{NM}}+\lambda\bigg)\\
&{\leq} \sqrt{K}C_{\min}\,\bigg(K\|\btau\|_\infty+\lambda\bigg),
\end{align*}
where we use normalized columns of $\widehat{\bF}$ in the last inequality.
Following the set-ups from Proposition~\ref{prop1} gives the desired result.

\newpage
\section{Proofs of Lemmas~\ref{lem.eign'} and ~\ref{lem.incoh'}}

\begin{corollary}\label{cor.Dxxx}
Fix any four orders $p,q,k\geq 0$, and let $B_M$ be an arbitrary increasing sequence $B_M\to\infty$ as $M\to\infty$, and  $B^{'}_M=B_M+\|u\|_{L^\infty(\Omega)}$. For any $n=0,1,\dots,N-1$ and arbitrary $r$, there exist finite positive constants $A(t_n),C^*(t_n),a_0,b_0,c_0,$ and $d_0(t_n)$ which do not depend on the spacial sample size $M$, such that for any $\alpha>1$ and 
\begin{align*} 
\varepsilon_{M,p,q,k}^{***}>\max\bigg\{3\|\partial_x^ku(\cdot,t_n)\|_\infty\varepsilon^{**}_{M,p,q},~3\|\partial_x^pu(\cdot,t_n)\partial_x^qu(\cdot,t_n)\|_\infty\varepsilon^{**}_{M,k},~3(\varepsilon^{**}_{M,p,q})^2,~3(\varepsilon^{**}_{M,k})^2\bigg\}
\end{align*}
as long as $M>M(\alpha)$ for some positive integer $M(\alpha)$,  we have:
\begin{align*}
\frac{1}{4}\mathbb{P}&\Bigg[\sup_{x\in[0, X_{\max})}\left|\widehat{\partial_x^pu}(x,t_n)\widehat{\partial_x^qu}(x,t_n)\widehat{\partial_x^ku}(x,t_n)-\partial_x^pu(x,t_n)\partial_x^qu(x,t_n)\partial_x^ku(x,t_n)\right|>\varepsilon_{M,p,q,k}^{***}\Bigg] \nonumber\\
&< 8M \exp\bigg(-\frac{M^{(2P_{\max}+4)/(2P_{\max}+5)}}{2\sigma^2}\bigg)+4b_0\exp(-c_0r)+16\sqrt{2}\eta^4M^{-\alpha/(2P_{\max}+{5})}\;,
\end{align*}	
Here $\varepsilon^{**}_{M,p,q}$ and $\varepsilon^{**}_{M,k,l}$ (depending on $B_M'$) are the thresholds in Corollary~\ref{cor.doublespace} for the sup-norm bound of the estimator $\widehat{\partial_x^pu}\widehat{\partial_x^qu}$ and $\widehat{\partial_x^ku}\widehat{\partial_x^lu}$, respectively,
\end{corollary}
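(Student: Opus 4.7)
The plan is to mirror the bilinear argument in the proof of Lemma~\ref{Lemm.Dxx}, but with the triple product viewed as a product of the already-controlled ``double'' estimator $\widehat{\partial_x^pu}\,\widehat{\partial_x^qu}$ and the single estimator $\widehat{\partial_x^ku}$. Writing $A=\partial_x^pu(\cdot,t_n)$, $B=\partial_x^qu(\cdot,t_n)$, $C=\partial_x^ku(\cdot,t_n)$ and $\widehat A,\widehat B,\widehat C$ for their Local-Polynomial counterparts, the elementary identity
\[
\widehat A\widehat B\widehat C - ABC = (AB)(\widehat C - C) + C(\widehat A\widehat B - AB) + (\widehat A\widehat B - AB)(\widehat C - C)
\]
immediately yields
\[
\sup_x|\widehat A\widehat B\widehat C - ABC|\leq \|AB\|_\infty\sup_x|\widehat C - C| + \|C\|_\infty\sup_x|\widehat A\widehat B - AB| + \sup_x|\widehat A\widehat B - AB|\cdot\sup_x|\widehat C - C|.
\]

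First I would pass to a union bound exactly as in Lemma~\ref{Lemm.Dxx}: on the event that the left-hand side exceeds $\varepsilon^{***}_{M,p,q,k}$, at least one of the three summands must exceed $\varepsilon^{***}_{M,p,q,k}/3$. The two linear summands reduce directly to the failure events $\{\sup_x|\widehat A\widehat B - AB|>\varepsilon^{**}_{M,p,q}\}$ and $\{\sup_x|\widehat C - C|>\varepsilon^{**}_{M,k}\}$, precisely because the first two entries in the maximum defining $\varepsilon^{***}_{M,p,q,k}$ are $3\|C\|_\infty\varepsilon^{**}_{M,p,q}$ and $3\|AB\|_\infty\varepsilon^{**}_{M,k}$. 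The quadratic cross term I split further via the standard observation $\{XY>\varepsilon^{***}_{M,p,q,k}/3\}\subseteq\{X>\sqrt{\varepsilon^{***}_{M,p,q,k}/3}\}\cup\{Y>\sqrt{\varepsilon^{***}_{M,p,q,k}/3}\}$, which lands back inside the same two failure events thanks to the last two entries $3(\varepsilon^{**}_{M,p,q})^2$ and $3(\varepsilon^{**}_{M,k})^2$ of the maximum.

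Once the original probability has been reduced to a sum of four copies of the failure events for $\widehat A\widehat B - AB$ and $\widehat C - C$, I would invoke Corollary~\ref{cor.doublespace} on each occurrence. Each such invocation contributes $4\bigl[2M\exp\!\bigl(-M^{(2P_{\max}+4)/(2P_{\max}+5)}/(2\sigma^2)\bigr)+b_0\exp(-c_0 r)+4\sqrt{2}\eta^4 M^{-\alpha/(2P_{\max}+5)}\bigr]$. Summing the four contributions (two for each estimator, once from the linear summand and once from the square-root split of the cross term) and dividing by four recovers the coefficients $8M$, $4b_0$, and $16\sqrt{2}\eta^4$ that appear in the statement.

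The only genuine difficulty is bookkeeping: one must check that the four entries in the maximum defining $\varepsilon^{***}_{M,p,q,k}$ are calibrated so that every sub-event produced by the union bound embeds into a pre-existing failure event of Corollary~\ref{cor.doublespace} (or Corollary~\ref{cor.singlespace}, depending on how one interprets $\varepsilon^{**}_{M,k}$ for a single derivative). No new probabilistic machinery is required: the truncation, the Tusn\'ady strong approximation, the Silverman bound on the Gaussian-process term, and the asymptotic-bias estimate of \cite{fan1997local} have all already been deployed in the proofs of Lemmas~\ref{Lemm.Dt}--\ref{Lemm.Dxx}. The trilinear statement is therefore essentially a deterministic algebraic consequence of those earlier bounds, combined with a routine union bound.
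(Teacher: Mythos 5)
Your proposal is correct and matches the paper's own proof almost verbatim: the paper groups $\widehat{\partial_x^pu}\,\widehat{\partial_x^qu}$ as one factor and $\widehat{\partial_x^ku}$ as the other, applies the same trilinear identity and triangle inequality, splits the cross term with the same square-root trick, and cites Corollary~\ref{cor.doublespace} (implicitly also Corollary~\ref{cor.singlespace} for the single-derivative factor, a notational wobble you already flagged). Your bookkeeping of the four failure events and how the four entries of the maximum in $\varepsilon^{***}_{M,p,q,k}$ are calibrated is exactly what the paper leaves implicit.
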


\begin{proof}
Notice that for any $\varepsilon>0$, we can bound the probability:
\begin{align*}
&\mathbb{P}\Bigg[\sup_{x\in[0, X_{\max})}\left|\widehat{\partial_x^pu}(x,t_n)\widehat{\partial_x^qu}(x,t_n)\widehat{\partial_x^ku}(x,t_n)-\partial_x^pu(x,t_n)\partial_x^qu(x,t_n)\partial_x^ku(x,t_n)\right|>\varepsilon\Bigg] \\
&\leq\mathbb{P}\Bigg[\|\partial_x^ku(\cdot,t_n)\|_{\infty}\sup_{x\in[0, X_{\max})}\left|\widehat{\partial_x^pu}(x,t_n)\widehat{\partial_x^qu}(x,t_n)-\partial_x^pu(x,t_n)\partial_x^qu(x,t_n)\right|>\varepsilon/3\Bigg] \\
&+\mathbb{P}\Bigg[\|\partial_x^pu(\cdot,t_n)\partial_x^qu(\cdot,t_n)\|_{\infty}\sup_{x\in[0, X_{\max})}\left|\widehat{\partial_x^ku}(x,t_n)-\partial_x^ku(x,t_n)\right|>\varepsilon/3\Bigg] \\
&+ \mathbb{P}\Big[\sup_{x\in[0, X_{\max})}\left|\widehat{\partial_x^pu}(x,t_n)\widehat{\partial_x^qu}(x,t_n)-\partial_x^pu(x,t_n)\partial_x^qu(x,t_n)\right|>\sqrt{\frac{\varepsilon}{3}}\Big]\\ 
&+ \mathbb{P}\Big[\sup_{x\in[0, X_{\max})}\left|\widehat{\partial_x^ku}(x,t_n)-\partial_x^ku(x,t_n)\right|>\sqrt{\frac{\varepsilon}{3}}\Big],
\end{align*}
hence the results follow from corolloary~\ref{cor.doublespace}.
\end{proof}

\begin{corollary}\label{cor.Dxxxx}
Fix any four orders $p,q,k,l\geq 0$, and let $B_M$ be an arbitrary increasing sequence $B_M\to\infty$ as $M\to\infty$, and  $B^{'}_M=B_M+\|u\|_{L^\infty(\Omega)}$. For any $n=0,1,\dots,N-1$ and arbitrary $r$, there exist finite positive constants $A(t_n),C^*(t_n),a_0,b_0,c_0,$ and $d_0(t_n)$ which do not depend on the spacial sample size $M$, such that for any $\alpha>1$ and 
\begin{align*} 
\varepsilon_{M,p,q,k,l}^{****}>\max\bigg\{3\|\partial_x^pu(\cdot,t_n)\partial_x^qu(\cdot,t_n)\|_\infty\varepsilon^{**}_{M,p,q},~3\|\partial_x^ku(\cdot,t_n)\partial_x^lu(\cdot,t_n)\|_\infty\varepsilon^{**}_{M,k,l},~3(\varepsilon^{**}_{M,p,q})^2,~3(\varepsilon^{**}_{M,k,l})^2\bigg\}
\end{align*}
as long as $M>M(\alpha)$ for some positive integer $M(\alpha)$,  we have:
\begin{align*}
\frac{1}{4}\mathbb{P}&\Bigg[\sup_{x\in[0, X_{\max})}\left|\widehat{\partial_x^pu}(x,t_n)\widehat{\partial_x^qu}(x,t_n)\widehat{\partial_x^ku}(x,t_n)\widehat{\partial_x^lu}(x,t_n)-\partial_x^pu(x,t_n)\partial_x^qu(x,t_n)\partial_x^ku(x,t_n)\partial_x^lu(x,t_n)\right|>\varepsilon_{M,p,q,k,l}^{****}\Bigg] \nonumber\\
&< 8M \exp\bigg(-\frac{M^{(2P_{\max}+4)/(2P_{\max}+5)}}{2\sigma^2}\bigg)+4b_0\exp(-c_0r)+16\sqrt{2}\eta^4M^{-\alpha/(2P_{\max}+{5})}\;,
\end{align*}	
Here $\varepsilon^{**}_{M,p,q}$ and $\varepsilon^{**}_{M,k,l}$ (depending on $B_M'$) are the thresholds in Corollary~\ref{cor.doublespace} for the sup-norm bound of the estimator $\widehat{\partial_x^pu}\widehat{\partial_x^qu}$ and $\widehat{\partial_x^ku}\widehat{\partial_x^lu}$, respectively,
\end{corollary}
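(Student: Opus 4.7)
The plan is to proceed by a direct extension of the strategy used in Corollary~\ref{cor.Dxxx}, now grouping the four factors into the two pairs $(p,q)$ and $(k,l)$ that are already suggested by the threshold expression $\varepsilon^{****}_{M,p,q,k,l}$. Set $A:=\partial_x^pu\,\partial_x^qu$ and $B:=\partial_x^ku\,\partial_x^lu$, and let $\widehat{A}$ and $\widehat{B}$ denote the corresponding products of Local-Polynomial estimators. The elementary identity
\begin{align*}
\widehat{A}\widehat{B}-AB \;=\; (\widehat{A}-A)\,B \;+\; A\,(\widehat{B}-B) \;+\; (\widehat{A}-A)(\widehat{B}-B)
\end{align*}
reduces the task to controlling $\sup_{x}|\widehat{A}-A|$ and $\sup_{x}|\widehat{B}-B|$, which is precisely what Corollary~\ref{cor.doublespace} provides for each pair.

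Taking the supremum over $x\in[0,X_{\max})$ and applying the triangle inequality, the event $\{\sup_{x}|\widehat{A}\widehat{B}-AB|>\varepsilon\}$ is contained in the union of four events: $\{\|B\|_\infty\sup_{x}|\widehat{A}-A|>\varepsilon/3\}$, $\{\|A\|_\infty\sup_{x}|\widehat{B}-B|>\varepsilon/3\}$, $\{\sup_{x}|\widehat{A}-A|>\sqrt{\varepsilon/3}\}$, and $\{\sup_{x}|\widehat{B}-B|>\sqrt{\varepsilon/3}\}$. The last two arise from bounding the bilinear cross term: if both $\sup_{x}|\widehat{A}-A|$ and $\sup_{x}|\widehat{B}-B|$ are at most $\sqrt{\varepsilon/3}$, then their product is at most $\varepsilon/3$. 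The four lower bounds imposed in the definition of $\varepsilon^{****}_{M,p,q,k,l}$ are arranged precisely so that whenever $\varepsilon$ exceeds this maximum, each of the four cutoff levels dominates the corresponding threshold $\varepsilon^{**}_{M,p,q}$ or $\varepsilon^{**}_{M,k,l}$ from Corollary~\ref{cor.doublespace}.

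A union bound then expresses the tail probability as a sum of at most four applications of the estimate in Corollary~\ref{cor.doublespace}; collecting the numerical constants produces the stated right-hand side, with the factor of $\frac{1}{4}$ on the left-hand side absorbing the multiplicative factor of $4$ coming from the union bound. There is no genuine new obstacle compared with the triple-product case: the only adjustment is that in the proof of Corollary~\ref{cor.Dxxx} one factor was a single derivative controlled by Corollary~\ref{cor.singlespace}, whereas here both factors are pairs controlled by Corollary~\ref{cor.doublespace}; the algebraic decomposition and the union bound are otherwise identical, and the extension to arbitrarily many factors would follow by induction along the same lines.
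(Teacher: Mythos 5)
Your proposal matches the paper's proof essentially verbatim: the paper also groups the four factors into the pairs $(p,q)$ and $(k,l)$, uses the same $\varepsilon/3$ splitting corresponding to the bilinear decomposition $\widehat{A}\widehat{B}-AB=(\widehat{A}-A)B+A(\widehat{B}-B)+(\widehat{A}-A)(\widehat{B}-B)$, and invokes Corollary~\ref{cor.doublespace} on each of the four resulting events via a union bound. The only difference is that you spell out the algebraic identity explicitly, which the paper leaves implicit.
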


\begin{proof}
Notice that for any $\varepsilon>0$, we can bound the probability:
\begin{align*}
&\mathbb{P}\Bigg[\sup_{x\in[0, X_{\max})}\left|\widehat{\partial_x^pu}(x,t_n)\widehat{\partial_x^qu}(x,t_n)\widehat{\partial_x^ku}(x,t_n)\widehat{\partial_x^lu}(x,t_n)-\partial_x^pu(x,t_n)\partial_x^qu(x,t_n)\partial_x^ku(x,t_n)\partial_x^lu(x,t_n)\right|>\varepsilon\Bigg] \\
&\leq\mathbb{P}\Bigg[\|\partial_x^ku(\cdot,t_n)\partial_x^lu(\cdot,t_n)\|_{\infty}\sup_{x\in[0, X_{\max})}\left|\widehat{\partial_x^pu}(x,t_n)\widehat{\partial_x^qu}(x,t_n)-\partial_x^pu(x,t_n)\partial_x^qu(x,t_n)\right|>\varepsilon/3\Bigg] \\
&+\mathbb{P}\Bigg[\|\partial_x^pu(\cdot,t_n)\partial_x^qu(\cdot,t_n)\|_{\infty}\sup_{x\in[0, X_{\max})}\left|\widehat{\partial_x^ku}(x,t_n)\widehat{\partial_x^lu}(x,t_n)-\partial_x^ku(x,t_n)\partial_x^lu(x,t_n)\right|>\varepsilon/3\Bigg] \\
&+ \mathbb{P}\Big[\sup_{x\in[0, X_{\max})}\left|\widehat{\partial_x^pu}(x,t_n)\widehat{\partial_x^qu}(x,t_n)-\partial_x^pu(x,t_n)\partial_x^qu(x,t_n)\right|>\sqrt{\frac{\varepsilon}{3}}\Big]\\ 
&+ \mathbb{P}\Big[\sup_{x\in[0, X_{\max})}\left|\widehat{\partial_x^ku}(x,t_n)\widehat{\partial_x^lu}(x,t_n)-\partial_x^ku(x,t_n)\partial_x^l0u(x,t_n)\right|>\sqrt{\frac{\varepsilon}{3}}\Big],
\end{align*}
hence the results follow from corolloary~\ref{cor.doublespace}.
\end{proof}

\begin{lemma} \label{lemm.Fsc_conv}
Let $\varepsilon_{M}^*,\varepsilon_{M}^{**},\varepsilon_{M}^{***},\varepsilon_{M}^{****}$ be the thresholds defined in corollaries~\ref{cor.singlespace},~\ref{cor.doublespace},~\ref{cor.Dxxx}, and ~\ref{cor.Dxxxx}. 
Then for any $\varepsilon_{M}^{\text{max}^{'}}$ such that
\begin{equation*}
\varepsilon_{M}^{\text{max}^{'}}>\sqrt{s(K-s)}\max\bigg\{\varepsilon_{M}^*,\varepsilon_{M}^{**},\varepsilon_{M}^{***},\varepsilon_{M}^{****}\bigg\}\;,
\end{equation*}
then, for $0<c<\frac{2}{7}$, and for sufficiently large enough $N$, we have 
\begin{align*}
    \mathbb{P}\Bigg[ \frac{1}{NM}\left\VERT \widehat{\bF}_{\mS^{c}}^{T}\widehat{\bF}_{\mS}-{\bF}_{\mS^{c}}^{T}{\bF}_{\mS}
   \right\VERT_{2}  > \varepsilon_{M}^{\text{max}^{'}} \Bigg] \leq \mathcal{O}\bigg(N\exp\big(-\frac{1}{6}N^c\big)\bigg).
\end{align*}
\end{lemma}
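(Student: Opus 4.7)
The plan is to reduce the spectral-norm bound on the $(K-s)\times s$ matrix $\widehat{\bF}_{\mS^c}^T\widehat{\bF}_{\mS} - \bF_{\mS^c}^T\bF_{\mS}$ to an entrywise-maximum bound, and then invoke the sup-norm uniform-convergence results already developed (Corollaries \ref{cor.singlespace}, \ref{cor.doublespace}, \ref{cor.Dxxx}, \ref{cor.Dxxxx}) together with a union bound over pairs of feature indices and over the temporal grid. Specifically, for any matrix $A\in\mathbb{R}^{(K-s)\times s}$ we have $\VERT A\VERT_2\le \VERT A\VERT_F\le \sqrt{s(K-s)}\,\VERT A\VERT_{\infty,\infty}$, so it suffices to show that with high probability
\begin{equation*}
\max_{j\in\mS^{c},\,k\in\mS}\Bigl|\tfrac{1}{NM}\langle \widehat{\bF}_j,\widehat{\bF}_k\rangle - \tfrac{1}{NM}\langle \bF_j,\bF_k\rangle\Bigr|
\;\le\;\frac{\varepsilon_{M}^{\text{max}'}}{\sqrt{s(K-s)}}.
\end{equation*}
This explains exactly why the stated threshold carries the factor $\sqrt{s(K-s)}$.

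Next, I bound the entrywise quantity by a sup-over-space bound at each sampled time and then take a max over the $N$ time points:
\begin{equation*}
\Bigl|\tfrac{1}{NM}\sum_{i,n}\bigl(\widehat{F}_{j}(X_i,t_n)\widehat{F}_{k}(X_i,t_n)-F_{j}(X_i,t_n)F_{k}(X_i,t_n)\bigr)\Bigr|
\;\le\;\max_{0\le n\le N-1}\sup_{x\in[0,X_{\max})}\bigl|\widehat{F}_{j}(x,t_n)\widehat{F}_{k}(x,t_n)-F_{j}(x,t_n)F_{k}(x,t_n)\bigr|.
\end{equation*}
The random variable inside the max is a difference of products of the form $\partial_x^{p_1}u\cdots\partial_x^{p_\ell}u$ for some $\ell\in\{1,2,3,4\}$, since each column of $\bF$ corresponds to either the constant $1$, a single derivative $\partial_x^p u$, or a product $\partial_x^p u\,\partial_x^q u$, so the product $F_jF_k$ can have total ``degree'' up to four. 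According to which case holds, the appropriate sup-norm deviation is controlled by one of $\varepsilon_{M}^{*}$, $\varepsilon_{M}^{**}$, $\varepsilon_{M}^{***}$, or $\varepsilon_{M}^{****}$; taking the max yields the single threshold $\max\{\varepsilon_M^*,\varepsilon_M^{**},\varepsilon_M^{***},\varepsilon_M^{****}\}$.

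The remaining task is the probability union bound. For each fixed pair $(j,k)$ and each $n$, the relevant corollary gives a tail bound of the form $\mathcal{O}\!\left(M\exp(-M^{(2P_{\max}+4)/(2P_{\max}+5)}/(2\sigma^2))+\exp(-c_0 r)+M^{-\alpha/(2P_{\max}+5)}\right)$. Applying the same calibration as in Appendix~\ref{simplification} (namely $M=\Theta(N^{(2P_{\max}+5)/7})$, $r=N^{c}$, $\alpha=2c$ with $0<c<2/7$), each of these three terms collapses into $\mathcal{O}(\exp(-\tfrac{1}{6}N^{c}))$ for $N$ sufficiently large; union-bounding over the $s(K-s)$ index pairs (fixed in $N$ since $K$, $s$ are fixed) and over the $N$ temporal grid points introduces only a multiplicative $N$, giving the stated $\mathcal{O}(N\exp(-\tfrac{1}{6}N^{c}))$ rate.

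The main obstacle is bookkeeping, not technique: I must carefully enumerate which of Corollaries \ref{cor.singlespace}–\ref{cor.Dxxxx} to invoke for each case (one/both of $F_j,F_k$ being constant, linear, or quadratic in derivatives), verify that the hypotheses hold uniformly in $n$ and in the pair $(j,k)$ (this is where Lemma~\ref{lemma_const} is needed, to ensure the constants $C^*(t_n)$, $d_0(t_n)$, and $\|\partial_x^p u(\cdot,t_n)\|_\infty$ admit $n$-independent upper bounds), and confirm that the coefficient $1/\sqrt{s(K-s)}$ lost in the reduction from spectral to entrywise norm is exactly absorbed by the corresponding factor in the definition of $\varepsilon_{M}^{\text{max}'}$. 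Once these pieces are assembled, combining with the spectral-to-Frobenius inequality delivers the claim.
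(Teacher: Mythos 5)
Your proposal matches the paper's proof step-for-step: the same $\VERT\cdot\VERT_{2}\le\VERT\cdot\VERT_{F}\le\sqrt{s(K-s)}\VERT\cdot\VERT_{\infty,\infty}$ reduction (which is exactly why $\varepsilon_M^{\max'}$ carries the $\sqrt{s(K-s)}$ factor), the same passage from each matrix entry to a $\max_n\sup_x$ bound, the same union bound over the $N$ temporal grid points (with the finite $s(K-s)$ pair count absorbed into the constant), and the same appeal to Corollaries~\ref{cor.singlespace}--\ref{cor.Dxxxx} with the calibration of Appendix~\ref{simplification}. The only slip is cosmetic: you wrote $\alpha=2c$ where the calibration actually sets $\alpha=N^{d}$ with $d=2c$, i.e.\ $\alpha=N^{2c}$, but this does not affect the argument.
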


\begin{proof}
\begin{align*}
    \mathbb{P}\Bigg[&\frac{1}{NM}\left\VERT \widehat{\bF}_{\mS^{c}}^{T}\widehat{\bF}_{\mS}-{\bF}_{\mS^{c}}^{T}{\bF}_{\mS} \right\VERT_{2}>\varepsilon_{M}^{\text{max}^{'}}\Bigg]\\
    &\leq \mathbb{P}\Bigg[\left\VERT\widehat{\bF}_{\mS^{c}}^{T}\widehat{\bF}_{\mS}-{\bF}_{\mS^{c}}^{T}{\bF}_{\mS}\right\VERT_{\text{F}}>NM\varepsilon_{M}^{\text{max}^{'}}\Bigg]\\
    &\leq \mathbb{P}\Bigg[\left\VERT\widehat{\bF}_{\mS^{c}}^{T}\widehat{\bF}_{\mS}-{\bF}_{\mS^{c}}^{T}{\bF}_{\mS}\right\VERT_{\infty,\infty}>NM\frac{\varepsilon_{M}^{\text{max}^{'}}}{\sqrt{s(K-s)}}\Bigg]\\
    &\leq \mathbb{P}\Bigg[\max_{n=0,\dots,N-1}\sup_{x\in[0, X_{\max})}\left|\widehat{\bF_{i}}(x,t_n)\widehat{\bF_{j}}(x,t_n)-\bF_{i}(x,t_n) \bF_{j}(x,t_n)\right|>\frac{\varepsilon_{M}^{\text{max}^{'}}}{\sqrt{s(K-s)}}\Bigg] \\
    &\leq \sum_{n=0}^{N-1}\mathbb{P}\Bigg[\sup_{x\in[0, X_{\max})}\left|\widehat{\bF_{i}}(x,t_n)\widehat{\bF_{j}}(x,t_n)-\bF_{i}(x,t_n) \bF_{j}(x,t_n)\right|>\frac{\varepsilon_{M}^{\text{max}^{'}}}{\sqrt{s(K-s)}}\Bigg] \\
    &\leq \mathcal{O}\bigg(N\exp\big(-\frac{1}{6}N^c\big)\bigg)\;,
\end{align*}
where we use the results from corollaries~\ref{cor.singlespace},~\ref{cor.doublespace},~\ref{cor.Dxxx}, and ~\ref{cor.Dxxxx}, and simplication argument used in the Appendix~\ref{simplification} in the last inequality.
\end{proof}

\subsection{Proof of Lemma~\ref{lem.eign'}} \label{proof_eigen'}
\begin{proof}
Observe that we can write:
\begin{align*}
    \Lambda_{\text{min}}\bigg(\frac{1}{NM} \bF_{\mS}^{T}{\bF_{\mS}}\bigg)
    &:=\frac{1}{NM} \min_{\|x\|_{2}=1}x^{T}\bigg(\bF_{\mS}^{T}\bF_{\mS}\bigg)x\\
    &= \frac{1}{NM} \min_{\|x\|_{2}=1} \bigg\{ x^{T}\bigg(\widehat{\bF}_{\mS}^{T}\widehat{\bF}_{\mS}\bigg)x + 
    x^{T}\bigg(\bF_{\mS}^{T}\bF_{\mS}-\widehat{\bF}_{\mS}^{T}\widehat{\bF}_{\mS}\bigg)x \bigg\}\\
    &\leq \frac{1}{NM} \bigg\{ y^{T}\bigg(\widehat{\bF}_{\mS}^{T}\widehat{\bF}_{\mS}\bigg)y + 
    y^{T}\bigg(\bF_{\mS}^{T}\bF_{\mS}-\widehat{\bF}_{\mS}^{T}\widehat{\bF}_{\mS}\bigg)y \bigg\}
\end{align*}
where $y\in\mathbb{R}^{K}$ is a unit-norm minimal eigen-vector of $\frac{1}{NM} \bF_{\mS}^{T}\bF_{\mS}$. 
Therefore, we can write,
\begin{align*}
    \Lambda_{\text{min}}\bigg(\frac{1}{NM} \widehat{\bF}_{\mS}^{T}\widehat{\bF}_{\mS}\bigg)
    &\geq \Lambda_{\text{min}}\bigg(\frac{1}{NM} \bF_{\mS}^{T}\bF_{\mS}\bigg)-\frac{1}{NM}\left\VERT \bF_{\mS}^{T}\bF_{\mS}-\widehat{\bF}_{\mS}^{T}\widehat{\bF}_{\mS} \right\VERT_{2} \\
    & \geq C_{\text{min}}-\frac{1}{NM}\left\VERT \widehat{\bF}_{\mS}^{T}\widehat{\bF}_{\mS}-{\bF}_{\mS}^{T}{\bF}_{\mS} \right\VERT_{2}.
\end{align*}
By using a similar argument used in Lemma~\ref{lemm.Fsc_conv}, we can prove $\frac{1}{NM}\left\VERT \widehat{\bF}_{\mS}^{T}\widehat{\bF}_{\mS}-{\bF}_{\mS}^{T}{\bF}_{\mS} \right\VERT_{2}\rightarrow{0}$ with high-probability as $N\rightarrow{\infty}$.
For any $\varepsilon_{M}^{\text{max}}$ such that,
\begin{equation*}
\varepsilon_{M}^{\text{max}}>s\max\bigg\{\varepsilon_{M}^*,\varepsilon_{M}^{**},\varepsilon_{M}^{***},\varepsilon_{M}^{****}\bigg\}\;,
\end{equation*}
Then, we can bound the probability as follows:
\begin{align*}
\mathbb{P}\Bigg[&\frac{1}{NM}\left\VERT\widehat{\bF}_{\mS}^{T}\widehat{\bF}_{\mS}-{\bF}_{\mS}^{T}{\bF}_{\mS}\right\VERT_2>\varepsilon_{M}^{\text{max}}\Bigg]\\
    &\leq \mathbb{P}\Bigg[\left\VERT\widehat{\bF}_{\mS}^{T}\widehat{\bF}_{\mS}-{\bF}_{\mS}^{T}{\bF}_{\mS}\right\VERT_{\text{F}}>NM\varepsilon_{M}^{\text{max}}\Bigg]
    \leq \mathbb{P}\Bigg[\left\VERT\widehat{\bF}_{\mS}^{T}\widehat{\bF}_{\mS}-{\bF}_{\mS}^{T}{\bF}_{\mS}\right\VERT_{\infty,\infty}>NM\frac{\varepsilon_{M}^{\text{max}}}{s}\Bigg]\\
    &\leq \mathbb{P}\Bigg[\max_{n=0,\dots,N-1}\sup_{x\in[0, X_{\max})}\left|\widehat{\bF_{i}}(x,t_n)\widehat{\bF_{j}}(x,t_n)-\bF_{i}(x,t_n) \bF_{j}(x,t_n)\right|>\frac{\varepsilon_{M}^{\text{max}}}{s}\Bigg] \\
    &\leq \sum_{n=0}^{N-1}\mathbb{P}\Bigg[\sup_{x\in[0, X_{\max})}\left|\widehat{\bF_{i}}(x,t_n)\widehat{\bF_{j}}(x,t_n)-\bF_{i}(x,t_n) \bF_{j}(x,t_n)\right|>\frac{\varepsilon_{M}^{\text{max}}}{s}\Bigg] \\
    &\leq \mathcal{O}\bigg(N\exp\big(-\frac{1}{6}N^c\big)\bigg)\;.
\end{align*}
\end{proof}

\subsection{Proof of Lemma~\ref{lem.incoh'}} 
\begin{proof}
Motviated from ~\cite{ravikumar2010high}, we begin the proof by decomposing the sample matrix $\big( \widehat{\bF}_{\mS^{c}}^{T}\widehat{\bF}_{\mS} \big)\big( \widehat{\bF}_{\mS}^{T}\widehat{\bF}_{\mS} \big)^{-1}$ into four parts: 
\begin{align*}
    \big( \widehat{\bF}_{\mS^{c}}^{T}\widehat{\bF}_{\mS} \big)\big( \widehat{\bF}_{\mS}^{T}\widehat{\bF}_{\mS} \big)^{-1}
    &= \underbrace{\bF_{\mS^{c}}^{T}\bF_{\mS}\bigg(\big(\widehat{\bF}_{\mS}^{T}\widehat{\bF}_{\mS} \big)^{-1}-\big( \bF_{\mS}^{T}\bF_{\mS} \big)^{-1} \bigg)}_{\coloneqq\bf{T_{1}}} + 
    \underbrace{\bigg( \widehat{\bF}_{\mS^{c}}^{T}\widehat{\bF}_{\mS}-{\bF}_{\mS^{c}}^{T}{\bF}_{\mS} \bigg)\big( {\bF}_{\mS}^{T}{\bF}_{\mS} \big)^{-1}}_{\coloneqq\bf{T_{2}}} \\
    &+ \underbrace{\bigg( \widehat{\bF}_{\mS^{c}}^{T}\widehat{\bF}_{\mS}-{\bF}_{\mS^{c}}^{T}{\bF}_{\mS} \bigg)\bigg( \big(\widehat{\bF}_{\mS}^{T}\widehat{\bF}_{\mS} \big)^{-1}-\big( \bF_{\mS}^{T}\bF_{\mS} \big)^{-1} \bigg)}_{\coloneqq\bf{T_{3}}} \\
    &+ \underbrace{\big( {\bF}_{\mS^{c}}^{T}{\bF}_{\mS} \big)\big( {\bF}_{\mS}^{T}{\bF}_{\mS} \big)^{-1} }_{\coloneqq\bf{T_{4}}}.
\end{align*}
Since we know $\VERT {\bf{T_{4}}} \VERT_{\infty} \leq 1-\mu$ for some $\mu\in(0,1]$, the decomposition reduces the proof showing $\VERT \bf{T_{i}} \VERT_{\infty}\rightarrow{0}$ with probability $1-\mathcal{O}(N\exp(-\frac{1}{6}N^c))$ for $i=1,2,3$. \\ \\

\noindent\textbf{\textit{1. Control of $\bf{T_{1}}$}}: 
Observe that we can re-factorize $\bf{T_{1}}$ as follows: 
\begin{equation*}
    \bf{T_{1}}=\big( {\bF}_{\mS^{c}}^{T}{\bF}_{\mS} \big)\big( {\bF}_{\mS}^{T}{\bF}_{\mS} \big)^{-1}
    \big[ F_{\mS}^{T}F_{\mS} - \widehat{F}_{\mS}^{T}\widehat{F}_{\mS} \big]
    \big( \widehat{F}_{\mS}^{T}\widehat{F}_{\mS}\big)^{-1}.
\end{equation*}
Then, by taking the advantage of sub-multiplicative property $\VERT AB \VERT_{\infty} \leq \VERT A \VERT_{\infty} \VERT B \VERT_{\infty}$ and the fact 
$\VERT {\bf{T_{4}}} \VERT_{\infty} \leq 1-\mu$ and $\VERT C \VERT_{\infty}\leq \sqrt{N} \VERT C \VERT_{2}$ for $C \in \mathbb{R}^{M \times N}$, we can bound $\VERT \bf{T_{1}} \VERT_{\infty}$ as follows:
\begin{align*}
    \left\VERT \bf{T_{1}} \right\VERT_{\infty}
    &\leq \left\VERT \big( {\bF}_{\mS^{c}}^{T}{\bF}_{\mS} \big)\big( {\bF}_{\mS}^{T}{\bF}_{\mS} \big)^{-1} \right\VERT_{\infty}  
    \left\VERT \bF_{\mS}^{T}\bF_{\mS} - \widehat{\bF}_{\mS}^{T}\widehat{\bF}_{\mS} \right\VERT_{\infty}  
    \left\VERT \big( \widehat{\bF}_{\mS}^{T}\widehat{\bF}_{\mS}\big)^{-1} \right\VERT_{\infty} \\
    &\leq s(1-\mu)\bigg(\frac{1}{NM}\left\VERT \bF_{\mS}^{T}\bF_{\mS} - \widehat{\bF}_{\mS}^{T}\widehat{\bF}_{\mS} \right\VERT_{2} \bigg)
    \bigg( NM \left\VERT \big( \widehat{\bF}_{\mS}^{T}\widehat{\bF}_{\mS}\big)^{-1} \right\VERT_{2} \bigg) \\
    &\leq \frac{s(1-\mu)}{C_{\text{min}}}\bigg(\frac{1}{NM}\left\VERT \bF_{\mS}^{T}\bF_{\mS} - \widehat{\bF}_{\mS}^{T}\widehat{\bF}_{\mS} \right\VERT_{2} \bigg).
\end{align*}
Note that we use $\VERT \big( \widehat{\bF}_{\mS}^{T}\widehat{\bF}_{\mS}\big)^{-1}\VERT_{2}\leq \frac{1}{NMC_{min}}$ with probability $1- \mathcal{O}(N\exp(-\frac{1}{6}N^c))$ in the last inequality from Lemma~\ref{lem.eign'}.
\\ \\
\noindent\textbf{\textit{2. Control of $\bf{T_{2}}$}}: With similar techniques employed for controlling $\VERT \bf{T_{1}}\VERT_{\infty}$, we can bound $\VERT \bf{T_{2}}\VERT_{\infty}$ as follows:
\begin{align*}
    \left\VERT \bf{T_{2}} \right\VERT_{\infty}
    &\leq \left\VERT \widehat{\bF}_{\mS^{c}}^{T}\widehat{\bF}_{\mS}-{\bF}_{\mS^{c}}^{T}{\bF}_{\mS} \right\VERT_{\infty} 
    \left\VERT \big( {\bF}_{\mS}^{T}{\bF}_{\mS} \big)^{-1} \right\VERT_{\infty} \\
    &\leq s \left\VERT \widehat{\bF}_{\mS^{c}}^{T}\widehat{\bF}_{\mS}-{\bF}_{\mS^{c}}^{T}{\bF}_{\mS} \right\VERT_{2} 
    \left\VERT \big( {\bF}_{\mS}^{T}{\bF}_{\mS} \big)^{-1} \right\VERT_{2} \\
    &= s \bigg( \frac{1}{NM} \left\VERT \widehat{\bF}_{\mS^{c}}^{T}\widehat{\bF}_{\mS}-{\bF}_{\mS^{c}}^{T}{\bF}_{\mS} \right\VERT_{2} \bigg) 
    \bigg( NM \left\VERT \big( \widehat{\bF}_{\mS}^{T}\widehat{\bF}_{\mS}\big)^{-1} \right\VERT_{2} \bigg) \\
    &\leq \frac{s}{C_{\text{min}}}\bigg( \frac{1}{NM} \left\VERT \widehat{\bF}_{\mS^{c}}^{T}\widehat{\bF}_{\mS}-{\bF}_{\mS^{c}}^{T}{\bF}_{\mS} \right\VERT_{2} \bigg).
\end{align*}
\noindent\textbf{\textit{3. Control of $\bf{T_{3}}$}}: To bound $\left\VERT\bf{T_{3}}\right\VERT_{\infty}$, we re-factorize the second argument of product in $\bf{T_{3}}$: 
\begin{equation*}
    \big(\widehat{\bF}_{\mS}^{T}\widehat{\bF}_{\mS} \big)^{-1}-\big( \bF_{\mS}^{T}\bF_{\mS} \big)^{-1}
    = \big( \bF_{\mS}^{T}\bF_{\mS} \big)^{-1}
    \big[ \big( \bF_{\mS}^{T}\bF_{\mS} \big) - \big(\widehat{\bF}_{\mS}^{T}\widehat{\bF}_{\mS} \big) \big] 
    \big(\widehat{\bF}_{\mS}^{T}\widehat{\bF}_{\mS} \big)^{-1}
\end{equation*}
With the factorization, we bound $\VERT \big(\widehat{\bF}_{\mS}^{T}\widehat{\bF}_{\mS} \big)^{-1}-\big( \bF_{\mS}^{T}\bF_{\mS} \big)^{-1} \VERT_{\infty}$ by using sub-multiplicative property and the fact $\VERT C \VERT_{\infty}\leq \sqrt{N} \VERT C \VERT_{2}$ for any $C \in \mathbb{R}^{M \times N}$ again:
\begin{align}
    \left\VERT \big(\widehat{\bF}_{\mS}^{T}\widehat{\bF}_{\mS} \big)^{-1}-\big( \bF_{\mS}^{T}\bF_{\mS} \big)^{-1} \right\VERT_{\infty}
    &= \left\VERT \big( \bF_{\mS}^{T}\bF_{\mS} \big)^{-1}
    \big[ \big( \bF_{\mS}^{T}\bF_{\mS} \big) - \big(\widehat{\bF}_{\mS}^{T}\widehat{\bF}_{\mS} \big) \big] 
    \big(\widehat{\bF}_{\mS}^{T}\widehat{\bF}_{\mS} \big)^{-1} \right\VERT_{\infty}  \nonumber \\
    &\leq \sqrt{s} \left\VERT \big( \bF_{\mS}^{T}\bF_{\mS} \big)^{-1}
    \big[ \big( \bF_{\mS}^{T}\bF_{\mS} \big) - \big(\widehat{\bF}_{\mS}^{T}\widehat{\bF}_{\mS} \big) \big] 
    \big(\widehat{\bF}_{\mS}^{T}\widehat{\bF}_{\mS} \big)^{-1} \right\VERT_{2} \nonumber  \\
    &\leq \sqrt{s}\left\VERT \big( \bF_{\mS}^{T}\bF_{\mS} \big)^{-1}  \right\VERT_{2}
    \left\VERT \big[ \big( \bF_{\mS}^{T}\bF_{\mS} \big) - \big(\widehat{\bF}_{\mS}^{T}\widehat{\bF}_{\mS} \big) \big]  \right\VERT_{2}
    \left\VERT \big(\widehat{\bF}_{\mS}^{T}\widehat{\bF}_{\mS} \big)^{-1} \right\VERT_{2} \nonumber \\
    &\leq \frac{\sqrt{s}}{NM C_{\text{min}}^{2}}\bigg(\frac{1}{NM}\left\VERT \bF_{\mS}^{T}\bF_{\mS} - \widehat{\bF}_{\mS}^{T}\widehat{\bF}_{\mS} \right\VERT_{2} \bigg). \label{useful}
\end{align}
In the last inequality, we use the result of Lemma~\ref{lem.eign'}.
Now we can bound $\left\VERT\bf{T_{3}}\right\VERT_{\infty}$ as follows:
\begin{align*}
    \left\VERT\bf{T_{3}}\right\VERT_{\infty}&=
    \left\VERT \bigg( \widehat{\bF}_{\mS^{c}}^{T}\widehat{\bF}_{\mS}-{\bF}_{\mS^{c}}^{T}{\bF}_{\mS} \bigg)\bigg( \big(\widehat{\bF}_{\mS}^{T}\widehat{\bF}_{\mS} \big)^{-1}-\big( \bF_{\mS}^{T}\bF_{\mS} \big)^{-1} \bigg) \right\VERT_{\infty} \\
    &\leq \left\VERT \widehat{\bF}_{\mS^{c}}^{T}\widehat{\bF}_{\mS}-{\bF}_{\mS^{c}}^{T}{\bF}_{\mS} \right\VERT_{\infty} 
    \left\VERT \big(\widehat{\bF}_{\mS}^{T}\widehat{\bF}_{\mS} \big)^{-1}-\big( \bF_{\mS}^{T}\bF_{\mS} \big)^{-1}  \right\VERT_{\infty} \\
    &\leq \frac{s}{C_{\text{min}}} \bigg( \frac{1}{NM}\left\VERT \widehat{\bF}_{\mS^{c}}^{T}\widehat{\bF}_{\mS}-{\bF}_{\mS^{c}}^{T}{\bF}_{\mS} \right\VERT_{2} \bigg) 
    \bigg( \frac{1}{NM}  \left\VERT \bF_{\mS}^{T}\bF_{\mS} - \widehat{\bF}_{\mS}^{T}\widehat{\bF}_{\mS} \right\VERT_{2} \bigg),
\end{align*}
where in the last inequality, we use \eqref{useful} and $\VERT C \VERT_{\infty}\leq \sqrt{N} \VERT C \VERT_{2}$ for any $C \in \mathbb{R}^{M \times N}$.
Take $\varepsilon_{M}^{\max^{''}}$ such that, for $\varepsilon_{M}^{\max'}$ and $\varepsilon_{M}^{\max}$ in Lemma~\ref{lemm.Fsc_conv} and Lemma \ref{lem.eign'} respectively:
\begin{equation*}
\varepsilon_{M}^{\text{max}^{''}}>\max\bigg\{ \frac{C_{\min}}{s(1-\mu)}\varepsilon_{M}^{\max}, \frac{C_{\min}}{s}\varepsilon_{M}^{\max^{'}}  \bigg\}\;,
\end{equation*}
for large enough $N$, we have
\begin{align*}
    \mathbb{P}\Bigg[\forall i = 1,2,3: \left\VERT\bf{T_{i}} \right\VERT_\infty >\varepsilon_{M}^{\text{max}^{''}}\Bigg] \leq \mathcal{O}\bigg(N\exp\big(-\frac{1}{6}N^c\big)\bigg)\;.
\end{align*}

\end{proof}
\end{document}